\def\ZZ{{\mathbb Z}}
\def\Z{{\mathbb Z}}
\def\RR{{\mathbb R}}
\def\Sphere{{\mathbb S}}
\def\mcC{{\mycal C}}
\def\mcA{{\mycal A}}
\def\mcF{{\mycal F}}
\def\mcN{{\mycal N}}
\def\mcP{{\mycal P}}
\def\mcPP{{\Pi}}
\def\nbdry{n}
\def\bN{{\mathbb N}}
\def\eps{{\varepsilon}}
\newtheorem{theorem} {\sc  Theorem\rm} [section]
\newtheorem{lemma} [theorem] {\sc  Lemma\rm}
\newtheorem{proposition} [theorem] {\sc  Proposition\rm}
\newtheorem{definition}[theorem]{\sc  Definition\rm}
\newtheorem{remark}[theorem]{\sc  Remark\rm}
\newcounter{marnote}
\DeclareFontFamily{OT1}{rsfs}{}
\DeclareFontShape{OT1}{rsfs}{m}{n}{ <-7> rsfs5 <7-10> rsfs7 <10-> rsfs10}{}
\DeclareMathAlphabet{\mycal}{OT1}{rsfs}{m}{n}
\def\dist{{\rm dist}}
\def\tr{{\rm tr}}
\def\mcS{{\mycal{S}}}
\def\tilw{{\tilde w}}
\def\bN{{\mathbb N}}
\def\Ss{{\mathbb S}}
\def\be{\begin{equation}}
\def\ee{\end{equation}}
\def\dist{{\rm dist}}
\def\tr{{\rm tr}}
\def\mcS{{\mycal{S}}}
\def\tilw{{\tilde w}}
\def\mcE{{\mycal E}}
\def \f {\varphi}
\def\mcFRdom{{\mcF^R}}
\def\be{\begin{equation}}
\def\ee{\end{equation}}
\def\bea#1\eea{\begin{align}#1\end{align}}
\newcommand{\wkr}{$k$-fold $SO(2)$-symmetric  }
\newcommand{\okr}{$k$-fold $O(2)$-symmetric  }
\def\tiw{\tilde{w}}
\def\haw{\hat{w}}
\def\non{\nonumber}
\def\bw{{\mathbf{w}}}
\numberwithin{equation}{section}
\begin{document}

\title{Symmetry and multiplicity of solutions in a two-dimensional Landau-de Gennes model for liquid crystals}
\author{Radu Ignat\thanks{Institut de Math\'ematiques de Toulouse \& Institut Universitaire de France, UMR 5219, Universit\'e de Toulouse, CNRS, UPS IMT, F-31062 Toulouse Cedex 9, France.
Email: Radu.Ignat@math.univ-toulouse.fr}~, Luc Nguyen\thanks{Mathematical Institute and St Edmund Hall, University of Oxford, Andrew Wiles Building, Radcliffe Observatory Quarter, Woodstock Road, Oxford OX2 6GG, United Kingdom. Email: luc.nguyen@maths.ox.ac.uk}~, Valeriy Slastikov\thanks{School of Mathematics, University of Bristol, University Walk, Bristol, BS8 1TW, United Kingdom. Email: Valeriy.Slastikov@bristol.ac.uk}~ and Arghir Zarnescu\thanks{IKERBASQUE, Basque Foundation for Science, Maria Diaz de Haro 3,
48013, Bilbao, Bizkaia, Spain.}\,  \thanks{BCAM,  Basque  Center  for  Applied  Mathematics,  Mazarredo  14,  E48009  Bilbao,  Bizkaia,  Spain.
(azarnescu@bcamath.org)}\, \thanks{``Simion Stoilow" Institute of the Romanian Academy, 21 Calea Grivi\c{t}ei, 010702 Bucharest, Romania.}}
\date{}

 \maketitle
 
 \begin{abstract}
We consider a variational two-dimensional Landau-de Gennes
model in the theory of nematic liquid crystals in a disk of radius $R$. We prove that under a symmetric boundary condition carrying a topological defect of degree $\frac{k}{2}$ for some given {\bf even} non-zero integer $k$, there are exactly two minimizers for all large enough $R$. We show that the minimizers do not inherit the full symmetry structure of the energy functional and the boundary data. We further show that there are at least five symmetric critical points.
 \end{abstract} 
 
 \tableofcontents

 \section{Introduction} \label{sec:intro}

The questions of symmetry and stability of critical points for the Landau-de Gennes energy functional on two dimensional domains have been recently raised in the mathematical liquid crystal community \cite{BaumanParkPhillips,FRSZ, hu2016disclination, INSZ_AnnIHP,INSZ_CVPDE,KRSZ}. The particular focus of these works was on analyzing special symmetric critical points and investigating their stability properties depending on multiple parameters of the problem.

In this paper we continue the study of the symmetry, stability and multiplicity of critical points of the Landau-de Gennes energy using the same mathematical setting. The main result we establish is  the uniqueness (up to reflection) of the global minimizer in the most relevant physical regime of small elastic constant under the strong anchoring boundary condition which has a topological degree $\frac{k}{2}$ with even nonzero $k$ (see \eqref{BC1}-\eqref{def:n} below). As a consequence of this uniqueness, the minimizers satisfy a $k$-fold $O(2)$-symmetry (see Definition \ref{Def:kfO2}) which has not been identified earlier. Additionally, we prove the existence of two other $k$-fold $O(2)$-symmetric critical points which are not minimizing.

We recall the (non-dimensional) Landau-de Gennes energy functional in the disk $B_R  \subset \RR^2$ of radius $R \in (0,\infty)$ centered at the origin:
\begin{equation}
 \mcF[Q;B_R] = \int_{B_R} \Big[ \frac{1}{2}|\nabla Q|^2 + f_{\rm bulk}(Q)\Big]\,dx, \qquad Q \in H^1(B_R, \mcS_0),
	\label{Eq:LdG}
\end{equation}
where $\mcS_0$ is the set of  {\it $Q$-tensors}:
\begin{equation}
\mcS_0:=\{Q\in \mathbb{R}^{3\times 3}\,:\,tr(Q)=0,\,Q=Q^t\}.
	\label{Def:S0}
\end{equation}

The nonlinear bulk potential is given by 
\[
f_{\rm bulk}(Q) = - \frac{a^2}{2} \tr(Q^2) - \frac{b^2}{3} \tr(Q^3) + \frac{c^2}{4} (\tr(Q^2))^2 - f_*,
\] 
where $a^2\geq 0$, $b^2, c^2 > 0$ are appropriately scaled parameters and the normalizing constant $f_*$ is chosen such that the minimum value of $f_{\rm bulk}$ over $\mcS_0$ is zero. A direct computation gives\footnote{It is sometimes useful to note that the function $s \mapsto -\frac{a^2}{3} s^2 - \frac{2b^2}{27} s^3 + \frac{c^2}{9} s^4$ is minimized at $s = s_+$.}
\begin{equation}
f_* = -\frac{a^2}{3} s_+^2 - \frac{2b^2}{27} s_+^3 + \frac{c^2}{9} s_+^4
	\label{Eq:f*def}
\end{equation}
with 
\be\label{def:s_+}
s_+=\frac{ b^2 + \sqrt{b^4+24 a^2 c^2}}{4 c^2}>0.
\ee
The set of minimizers of 
$f_{\rm bulk}$, which we call the {\it limit manifold}, is given by the following set of uniaxial  $Q$-tensors
\begin{equation} \label{def:limmanifold}
\mcS_*:= \{Q\in\mcS_0: f_{\rm bulk}(Q) = 0\} = \left\{ Q=s_+\left(v \otimes v -\frac13 I_3 \right), \ \ v \in \Sphere^2\right\},
\ee
where $I_3$ is the $3 \times 3$ identity matrix.

The Euler-Lagrange equations satisfied by the critical points of $\mcF$ read
\begin{equation}
\Delta Q = - a^2\,Q - b^2 \big[Q^2 - \frac{1}{3}\tr(Q^2)I_3] + c^2\,\tr(Q^2)\,Q \quad \text{ in } B_R.
	\label{eq:EL}
\end{equation}

The Landau-de Gennes energy describes the pattern formation in liquid crystal systems, in particular, the so-called  defect patterns. A well-studied limit, relating the defects in the Landau-de Gennes framework with those in the Oseen-Frank framework,  is that of small elastic constant (after a suitable non-dimensionalisation -- see \cite{gartlandscalings}), considered, for instance, in \cite{ alama2015weak, BaumanParkPhillips, canevari2015biaxiality, golovaty2014minimizers} in $2D$ and \cite{canevari2017line,Ma-Za,NZ13-CVPDE} in $3D$. Qualitative properties of defects and their stability are studied, for example, in the case of one elastic constant in  $2D$ domains in  \cite{FRSZ, INSZ_AnnIHP, INSZ_CVPDE} and in $3D$ domains in \cite{canevari2016radial, INSZ3, lamyuniaxial}. Numerical explorations of the defects in $2D$ domains and several elastic constants are available in \cite{an2017equilibrium, golovaty2019phase, KRSZ}.

\bigskip
We couple the system \eqref{eq:EL} with the following strong anchoring boundary condition:
\be\label{BC1}
Q(x) = Q_b(x) \text{ on } \partial B_R
\ee
where the map $Q_b :\RR^2\setminus \{0\} \to \mcS_0$ is defined, for some fixed $k \in \ZZ \setminus \{0\}$, by
\begin{align}
Q_b(x) 
	&:=s_+ \left( n(x) \otimes n(x) -\frac{1}{3} I_3 \right) , \qquad x \in \RR^2 \setminus \{0\},
	\label{Qbdef}\\
n(r\cos \varphi, r\sin \varphi) 
	&:= (\cos \frac{k\varphi}{2}, \sin\frac{k\varphi}{2}, 0), \qquad r > 0, \,0 \leq \varphi < 2\pi.  
 \label{def:n} 
 \end{align}
Note that $Q_b$ has image in $\{s_+\left(v\otimes v -\frac13 I_3 \right)\,:\,  v \in \Sphere^1\} \cong \mathbb{R}P^1$ and, as a map from $\partial B_R \cong \mathbb{S}^1$ into $\mathbb{R}P^1$ (see for instance  formula $(8.16)$ in \cite{BrezisCoronLieb}),  has $\frac{1}{2}\ZZ$-valued topological degree $\frac{k}{2}$.

It is worth pointing out the difference between the cases when $k$ is even and odd. If $k$ is even, the vector $n$ defined in \eqref{def:n} is continuous at $\varphi=2\pi$, however, if  $k$ is odd there is a jump discontinuity at $\varphi=2\pi$. Nevertheless the boundary data $Q_b$  defined in terms of $n$ in \eqref{Qbdef} is continuous for any $k\in\ZZ$, but its topological features as a map from $ \partial B_R \cong \mathbb{S}^1$ into $\mathbb{R}P^1$ will depend on the parity of $k$, see Ball and Zarnescu \cite{BallZar}, Bethuel and Chiron \cite{BethuelChiron07-CM}, Brezis, Coron and Lieb \cite{BrezisCoronLieb}, Ignat and Lamy \cite{IgnatLamy19-CVPDE}. In particular, this leads to major qualitative differences in the properties of the critical points; see  \cite{BallZar, FRSZ, INSZ_AnnIHP, INSZ_CVPDE} for analytical studies in $2D$ domains which involve only one elastic constant, and \cite{golovaty2019phase,KRSZ} for numerical studies for several elastic constants. Moreover, in the limit of small elastic constant the minimal Landau-de Gennes energy becomes infinite in the case  of odd $k$ (see \cite{canevari2015biaxiality,golovaty2014minimizers}) and is finite in the case of even $k$ (see \cite{DRSZ2}).  This phenomenon leads to significant differences in the structure and distribution of defects depending on the parity of $k$ (see Appendix~\ref{sec:appendixoddk}).

\bigskip

\noindent {\bf Two group actions on the space $H^1(B_R, \mcS_0)$}. In the following we consider two types of symmetries induced by two group actions on the space  $H^1(B_R, \mcS_0)$ which keep invariant both the energy functional $\mcF$ as well as the boundary condition \eqref{BC1}-\eqref{def:n}.

\medskip

$\bullet$ $k$-{\bf fold} $O(2)$-{\bf symmetry}. 
For $k \in \ZZ \setminus \{ 0 \}$, we introduce the following group action of $O(2)$ on $H^1(B_R, \mcS_0)$. We identify $O(2)\sim \{0,1\} \times \mathbb{S}^1\sim \{0,1\} \times [0,2\pi)$ and define the action of $O(2)$ on $H^1(B_R, \mcS_0)$ by 
\be
\label{Eq:fullyk_rad}
(\alpha, \psi, Q)\in \{0,1\} \times [0,2\pi) \times H^1(B_R, \mcS_0) \mapsto Q_{\alpha,\psi} \in H^1(B_R, \mcS_0).
\ee
Here $Q_{\alpha,\psi}$ is defined  as
\begin{equation}
Q_{\alpha,\psi}(x):= L^\alpha {\mathcal R}_k^t (\psi) Q\bigg(P_2\big(L^\alpha {\mathcal R}_2 (\psi)  \tilde x\big)\bigg) {\mathcal R}_k (\psi) L^\alpha \quad \textrm{for almost  every } x=(x_1,x_2)\in B_R 
	\label{Eq:Qapdef}
\end{equation}
with $\tilde x=(x_1,x_2,0)$, $P_2:\RR^3\to \RR^2$ given by projection $P_2(x_1,x_2,x_3)=(x_1,x_2)$, 
\begin{equation}
\label{eq: R_k }
{\mathcal R}_k (\psi) := \left(\begin{array}{ccc}\cos(\frac{k}{2}\psi) & -\sin(\frac{k}{2}\psi) & 0 \\\sin(\frac{k}{2}\psi) & \cos(\frac{k}{2}\psi) & 0 \\0 & 0 & 1\end{array}\right)
\end{equation}
representing an in-plane rotation about  $e_3=(0,0,1)$ by angle $\frac{k}{2}\psi$, and
\be\label{def:L}
L := \left(\begin{array}{ccc} 1 & 0 & 0 \\0 & -1 & 0 \\0 & 0 & 1\end{array}\right)
\ee
defining the reflection with respect to the plane perpendicular to the  $(0,1,0)$-direction.

\begin{definition}\label{Def:kfO2}
Let $k \in \Z\setminus\{0\}$. The subset of $H^1(B_R, \mcS_0)$ that is invariant under the group action \eqref{Eq:fullyk_rad} is called the set of $k$-{\bf fold} $O(2)$-{\bf symmetric} maps. Such a map $Q\in H^1(B_R, \mcS_0)$ is therefore characterized by
\begin{equation}
Q=Q_{\alpha,\psi} \quad \textrm{ in } B_R\,  \textrm{ for every } (\alpha,\psi) \in \{0,1\} \times [0,2\pi).
	\label{Eq:04IV19-Qsym}
\end{equation} 
\end{definition}

Sometimes when $k$ is clear (uniquely determined) from the context, we will omit ``$k$-fold'' and simply call the above property as $O(2)$-symmetry. The following proposition provides a characterization of $k$-fold $O(2)$-symmetric maps in the case of even $k$. Its proof is postponed until Section~\ref{Sec:WkRSS}.

\begin{proposition}\label{Prop:okrChar}
Let $k \in 2\ZZ\setminus \{0\}$. A map $Q \in H^1(B_R,\mcS_0)$ is \okr if and only if
\begin{equation}
Q(x) =w_0(r)E_0+w_1(r)E_1+w_3(r)E_3 \quad \text{ for a.e. } x = (r\cos \varphi, r \sin\varphi) \in B_R,
	\label{Eq:Qw0w1w3}
\end{equation}
where
\begin{equation}
E_0 = \sqrt{\frac{3}{2}}( e_3 \otimes e_3 - \frac{1}{3} I_3), 
	E_1 = \sqrt{2}(n \otimes n - \frac{1}{2} I_2),
	E_3 = \frac{1}{\sqrt{2}}(n \otimes e_3 + e_3 \otimes n)
	\label{Eq:E013def}
\end{equation}
with $n$ given by \eqref{def:n}, $e_3 = (0,0,1)$, $I_2 = I_3 - e_3 \otimes e_3$, $w_0 \in H^1((0,R);r\,dr)$ and $w_1,  w_3\in H^1((0,R);r\,dr) \cap L^2((0,R);\frac{1}{r}\,dr)$.
\end{proposition}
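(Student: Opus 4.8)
The plan is to split the $O(2)$-invariance into invariance under the rotation subgroup followed by invariance under a single reflection, extract from these the algebraic form of $Q$, and finally match the Sobolev regularity by a polar-coordinate computation. Fix an orthogonal splitting
\be
\mcS_0=\RR E_0\oplus V_{\rm pl}\oplus V_{\rm mix},\qquad V_{\rm pl}={\rm span}\{A_1,A_2\},\quad V_{\rm mix}={\rm span}\{B_1,B_2\},
\ee
with $A_1={\rm diag}(1,-1,0)$, $A_2=e_1\otimes e_2+e_2\otimes e_1$, $B_j=e_j\otimes e_3+e_3\otimes e_j$; this splitting is preserved by $M\mapsto\Rot_k(\psi)^tM\Rot_k(\psi)$ and by $M\mapsto LML$, and on the complexifications one checks
\be
\Rot_k(\psi)^t(A_1\pm iA_2)\Rot_k(\psi)=e^{\pm ik\psi}(A_1\pm iA_2),\qquad \Rot_k(\psi)^t(B_1\pm iB_2)\Rot_k(\psi)=e^{\pm i\frac k2\psi}(B_1\pm iB_2),
\ee
\be
L(A_1\pm iA_2)L=A_1\mp iA_2,\qquad L(B_1\pm iB_2)L=B_1\mp iB_2,
\ee
while $E_0$ is fixed by all of these. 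Writing $x=(r\cos\varphi,r\sin\varphi)$, decompose accordingly $Q=q_0(r,\varphi)E_0+{\rm Re}\big(g(r,\varphi)(A_1+iA_2)\big)+{\rm Re}\big(h(r,\varphi)(B_1+iB_2)\big)$ with $q_0$ real- and $g,h$ complex-valued $L^2$ functions on $B_R$.

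Impose first $Q=Q_{0,\psi}$ for every $\psi$. Since $P_2(\Rot_2(\psi)\tilde x)$ is the planar rotation of $x$ by the angle $\psi$, this reads $Q(r,\varphi)=\Rot_k(\psi)^tQ(r,\varphi+\psi)\Rot_k(\psi)$; using the identities above and the injectivity of $z\mapsto{\rm Re}(z(A_1+iA_2))$ and $z\mapsto{\rm Re}(z(B_1+iB_2))$ from $\CC$ into $V_{\rm pl}$, $V_{\rm mix}$, it is equivalent to the scalar relations $q_0(r,\varphi)=q_0(r,\varphi+\psi)$, $g(r,\varphi)=e^{ik\psi}g(r,\varphi+\psi)$, $h(r,\varphi)=e^{i\frac k2\psi}h(r,\varphi+\psi)$, valid for a.e. $r$, a.e. $\varphi$ and every $\psi$. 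Expanding $q_0,g,h$ in Fourier series in $\varphi$ with $r$-dependent coefficients, these relations annihilate all Fourier modes except $m=0$ for $q_0$, $m=-k$ for $g$, and $m=-k/2$ for $h$; here it is essential that $k$ be even, so that $-k/2\in\ZZ$ is an admissible mode (for odd $k$ the $V_{\rm mix}$-part is forced to vanish). Hence $q_0=w_0(r)$, $g=\gamma(r)e^{-ik\varphi}$, $h=\eta(r)e^{-i\frac k2\varphi}$, and since \eqref{Eq:E013def} gives $E_1=\frac1{\sqrt2}{\rm Re}\big(e^{-ik\varphi}(A_1+iA_2)\big)$, $E_3=\frac1{\sqrt2}{\rm Re}\big(e^{-i\frac k2\varphi}(B_1+iB_2)\big)$, the $SO(2)$-invariant maps form the five-parameter family $w_0(r)E_0+{\rm Re}\big(\gamma(r)e^{-ik\varphi}(A_1+iA_2)\big)+{\rm Re}\big(\eta(r)e^{-i\frac k2\varphi}(B_1+iB_2)\big)$ with $w_0$ real and $\gamma,\eta$ complex.

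Now impose the remaining generator $(\alpha,\psi)=(1,0)$, i.e. $Q(r,\varphi)=LQ(r,-\varphi)L$. Substituting the form just found, using $L(A_1+iA_2)L=A_1-iA_2$, $L(B_1+iB_2)L=B_1-iB_2$ and the evenness of the exponentials under $\varphi\mapsto-\varphi$, the same injectivity forces $\gamma=\overline\gamma$, $\eta=\overline\eta$; thus $\gamma,\eta$ are real and $Q=w_0E_0+w_1E_1+w_3E_3$ with $w_1=\sqrt2\,\gamma$, $w_3=\sqrt2\,\eta$. Conversely any $Q$ of this form is fixed by every $(0,\psi)$ and by $(1,0)$ by exactly these computations, hence --- since these elements generate $O(2)$ and $(\alpha,\psi)\mapsto(\,\cdot\,)_{\alpha,\psi}$ is a group action --- is \okr. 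This gives the equivalence with \eqref{Eq:Qw0w1w3}.

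For the regularity, observe that $\{E_0,E_1(\varphi),E_3(\varphi)\}$ is orthonormal in $\mcS_0$ for each $\varphi$, with $\partial_\varphi E_0=0$, $|\partial_\varphi E_1|^2=k^2$, $|\partial_\varphi E_3|^2=\frac{k^2}4$, and $\partial_\varphi E_1$, $\partial_\varphi E_3$ orthogonal to one another and to $E_0,E_1,E_3$. Hence, in polar coordinates,
\be
\int_{B_R}|\nabla Q|^2\,dx=2\pi\int_0^R\Big[(w_0')^2+(w_1')^2+(w_3')^2+\frac{k^2}{r^2}w_1^2+\frac{k^2}{4r^2}w_3^2\Big]r\,dr,\qquad\int_{B_R}|Q|^2\,dx=2\pi\int_0^R(w_0^2+w_1^2+w_3^2)r\,dr,
\ee
so $Q\in H^1(B_R,\mcS_0)$ if and only if $w_0\in H^1((0,R);r\,dr)$ and $w_1,w_3\in H^1((0,R);r\,dr)\cap L^2((0,R);\frac1r\,dr)$, the $\frac1r$-weight coming precisely from the angular derivatives of $E_1,E_3$ (for the ``if'' part one checks via a standard cutoff at the origin, using that $\{0\}$ has zero $H^1$-capacity in $\RR^2$, that \eqref{Eq:Qw0w1w3} then defines an $H^1$ map). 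The step requiring the most care is the Fourier argument in the second paragraph: passing rigorously from ``$Q=Q_{0,\psi}$ holds a.e. in $B_R$ for every $\psi$'' to the pointwise-a.e. modal structure of $Q$, which is handled by Fubini together with the $L^2$ theory of Fourier series on $\mathbb{S}^1$ with $r$ as a parameter.
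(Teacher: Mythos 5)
Your proof is correct, but it takes a genuinely different route from the paper's. The paper factors the statement through Proposition \ref{Prop:QWRSRep}, whose representation of $SO(2)$-symmetric maps is imported from \cite[Proposition 2.1]{INSZ_AnnIHP}: the $SO(2)$-symmetry yields the five radial components $w_0,\dots,w_4$, and the $\{0,1\}$-symmetry is separately characterized by $w_2,w_4$ being odd in $x_2$ (relation \eqref{Eq:01Char}); since radial functions are even in $x_2$, this forces $w_2=w_4\equiv 0$, and the converse plus the regularity statement are inherited from Proposition \ref{Prop:QWRSRep} and \eqref{Eq:GQ2}. You instead redo the equivariance analysis from scratch: the isotypic splitting $\RR E_0\oplus V_{\rm pl}\oplus V_{\rm mix}$ with conjugation eigenvalues $e^{\pm ik\psi}$ and $e^{\pm i\frac{k}{2}\psi}$, a Fourier-mode computation in $\varphi$ that pins the angular dependence (in effect re-proving the cited representation result), and then the single reflection $(1,0)$ forcing the radial coefficients to be real. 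This buys a self-contained argument that makes explicit where the evenness of $k$ enters (the mode $-k/2$ must be an integer, so for odd $k$ the $V_{\rm mix}$-part is killed, recovering Remark \ref{rem:kOdd}), and it handles both implications and the weighted-Sobolev regularity directly via the polar energy identity, which is exactly \eqref{Eq:GQ2} restricted to $w_2=w_4=0$. One small wording slip: the ``evenness of the exponentials under $\varphi\mapsto-\varphi$'' is not what is at work --- the sign flip of the exponential caused by $\varphi\mapsto-\varphi$ is undone by $L(A_1\pm iA_2)L=A_1\mp iA_2$, i.e.\ by ${\rm Re}\big(z(A_1-iA_2)\big)={\rm Re}\big(\bar z(A_1+iA_2)\big)$, and it is this that yields $\gamma=\overline{\gamma}$ and $\eta=\overline{\eta}$; the conclusion you draw is the correct one.
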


When $k$ is odd, $k$-fold $O(2)$-symmetric maps are of the form $Q(x)=w_0(r)E_0+w_1(r)E_1$, i.e., $w_3=0$ in \eqref{Eq:Qw0w1w3}. See Remark \ref{rem:kOdd}.

\medskip

$\bullet$ $\Z_2$-{\bf symmetry}. We introduce the group action of $\Z_2$ on $H^1(B_R, \mcS_0)$: 
\be
\label{Eq:k_rad}
(\alpha, Q)\in \Z_2 \times H^1(B_R, \mcS_0) \mapsto J^\alpha Q J^\alpha \in H^1(B_R, \mcS_0)
\ee
where $J$ stands for the reflection with respect to the plane perpendicular to the  $(0,0,1)$-direction.
\be\label{def:J}
J := \left(\begin{array}{ccc} 1 & 0 & 0 \\0 & 1 & 0 \\0 & 0 & -1\end{array}\right).
\ee 

\begin{definition}\label{Def:Z2S}
The subset of $H^1(B_R, \mcS_0)$ that is invariant under the group action \eqref{Eq:k_rad} is called the set of $\Z_2$-{\bf symmetric} maps. Such a map $Q\in H^1(B_R, \mcS_0)$ is therefore characterized by
$$Q=JQJ \quad \textrm{ in } B_R.$$ 
\end{definition}

We will see in Proposition \ref{Prop:Z2Char} that a map $Q\in H^1(B_R, \mcS_0)$ is 
$\Z_2$-symmetric if and only if $e_3 =(0,0,1)$ is an eigenvector of $Q(x)$ for almost all $x\in B_R$.
We note an important difference between the definitions of $k$-fold $O(2)$-symmetry and $\Z_2$-symmetry:
the $O(2)$-action on $H^1(B_R, \mcS_0)$ applies to both the domain and the target space while the $\Z_2$-action applies only to the target space.

It is clear that if $Q$ is a minimizer (or a critical point) of $\mcF$ under the boundary condition \eqref{BC1} then the elements of its orbit under the $k$-fold $O(2)$-action as well as the $\Z_2$-action are also minimizers (or critical points, respectively). A natural question therefore arises: 
do minimizers/critical points of $\mcF$ (under \eqref{BC1}) have $k$-fold $O(2)$-symmetry, or $\Z_2$-symmetry, or both, or maybe none? Some partial answers are available in the literature. In a work of Bauman, Park and Phillips \cite{BaumanParkPhillips}, which is not directly related to symmetry issues, it was shown that, for $|k| \neq 0,1$ and as $R \rightarrow \infty$, there exist none-$O(2)$-symmetric critical points. Their results might tempt one to extrapolate a lack of symmetry in general. This intuition would be also apparently supported by the numerical simulations in Hu, Qu and Zhang \cite{hu2016disclination} which observed lack of symmetry for a certain radius. However, in \cite{KRSZ} the $k$-fold $O(2)$-symmetry was numerically observed for a minimizer in the case of  even $k$ and large enough radius $R$ (probably larger than in the examples explored numerically in \cite{hu2016disclination}).

\begin{definition}\label{Def:kSym}
For $k \in \ZZ \setminus \{ 0 \}$, a map $Q\in H^1(B_R, \mcS_0)$ is called ({\bf $k$-fold}) $\Z_2 \times O(2)$-{\bf symmetric} if $Q$ is both ($k$-fold) $O(2)$-symmetric and $\Z_2$-symmetric. 
\end{definition}

We will see later (in Section \ref{Sec:WkRSS}) that all $\Z_2\times O(2)$-symmetric maps are of the form \footnote{In particular, in view of Remark \ref{rem:kOdd}, if $k$ is odd, all \okr maps are $\Z_2\times O(2)$-symmetric.}
\begin{equation}
Q(x) = w_0(|x|) E_0 + w_1(|x|) E_1 \textrm{ for a.e. }x\in B_R.
	\label{Eq:Qkrs}
\end{equation}
It is known from \cite{INSZ_AnnIHP} that all $\Z_2\times O(2)$-symmetric critical points of $\mcF$ coincide with the so-called $k$-radially symmetric critical points. See Section \ref{Sec:WkRSS} for more details. Note that the boundary data $Q_b$ defined in \eqref{Qbdef} is $\Z_2\times O(2)$-symmetric on $\partial B_R$. However, we will prove that the minimizers of $\mcF[\cdot; B_R]$ under the boundary condition \eqref{Qbdef} do not satisfy this symmetry (namely they are not $\Z_2$-symmetric).

 The structure and stability properties of $\Z_2\times O(2)$-symmetric critical points were investigated in \cite{FRSZ, INSZ_AnnIHP, INSZ_CVPDE}. In particular, it was proved that 
\begin{itemize}
\item when $b=0$ and $R<\infty$ they are  minimizers of the Landau-de Gennes energy for all $k \in \ZZ \setminus\{0\}$ (see \cite{FRSZ});
\item when $b \neq 0$, $\bN \ni |k|>1$ and $R$ is large enough they are unstable (see \cite{INSZ_AnnIHP});
\item when $b \neq 0$ and $k=\pm 1$ they  are locally stable for all $R \leq \infty$ under suitable condition on $w_0$ and $w_1$  in \eqref{Eq:Qkrs} (see \cite{INSZ_CVPDE}).\footnote{For $b^4 \leq 3a^2c^2$, the condition reduces to $w_0 < 0$ and $w_1 > 0$. }
\end{itemize}

In this paper we focus on the case
$$
k\in 2\mathbb{Z} \setminus \{0\},
$$ 
where the $k$-fold $O(2)$-symmetry does not imply in general the $\Z_2$-symmetry (some remarks on the case $k$ odd are provided in Appendix~\ref{sec:appendixoddk}). Our main result states that for large enough radius $R$ the Landau-de Gennes energy \eqref{Eq:LdG} under the boundary condition \eqref{BC1} has exactly two minimizers and these minimizers are $k$-fold $O(2)$-symmetric and $\Z_2$-conjugate to each other.

\begin{theorem} \label{thm:MinSymmetry}
Let $a^2 \geq 0, b^2, c^2 >0$ be any fixed constants and $k \in 2\ZZ \setminus \{0\}$. There exists some $R_0 = R_0(a^2, b^2, c^2, k) > 0$ such that for all $R > R_0$, there exist exactly two global minimizers $Q^\pm_R$ of $\mcF[\cdot;B_R]$ subjected to the boundary condition \eqref{BC1} and these minimizers are $k$-fold $O(2)$-symmetric (but not $\Z_2\times O(2)$-symmetric). The minimizers $Q^\pm_R$ are $\Z_2$-conjugate to one another, namely, $Q^\pm_R=JQ^\mp_R J\neq Q^\mp_R$ and have the form 
\be
\label{pag7}
Q^\pm_R (x) = w_0(|x|) E_0 + w_1(|x|) E_1 \pm w_3(|x|)  E_3 \quad \textrm{for every } x\in B_R,
\ee
where $E_0$, $E_1$ and $E_3$ are given by \eqref{Eq:E013def} and $w_3 > 0$ in $(0,R)$.
\end{theorem}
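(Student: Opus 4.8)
The plan is to first solve the limiting harmonic map problem as $R\to\infty$ and then to transfer its rigidity and non-degeneracy back to all large finite $R$. First, consider the problem of minimizing $\tfrac12\int_{B_1}|\nabla Q|^2$ over $Q\in H^1(B_1,\mcS_*)$ with $Q=Q_b$ on $\partial B_1$. Since $B_1$ is simply connected and $\mcS_*\cong\mathbb{R}P^2$, any such $Q$ lifts to a map $v\colon B_1\to\Sphere^2$, unique up to a global sign; since $k$ is even, $n$ from \eqref{def:n} is continuous on $\Sphere^1$, the admissible lifts are exactly the maps with $v|_{\partial B_1}=\pm n$, and their Dirichlet energies differ from those of $Q$ by a fixed positive factor. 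So the problem is equivalent to minimizing $\tfrac12\int_{B_1}|\nabla v|^2$ over $v\colon B_1\to\Sphere^2$ with $v|_{\partial B_1}=n$. Since $n$ traverses the equator $\tfrac{|k|}{2}$ times and avoids the poles $\pm e_3$, the locally constant degrees $d_N,d_S$ of a smooth competitor over the two open polar caps satisfy $|d_N-d_S|=\tfrac{|k|}{2}$, and with $Jv:=v\cdot(v_{x_1}\times v_{x_2})$ one obtains from $|\nabla v|^2\ge 2|Jv|$ and the area formula
\[
\tfrac12\int_{B_1}|\nabla v|^2\ \ge\ \int_{B_1}|Jv|\ \ge\ 2\pi\bigl(|d_N|+|d_S|\bigr)\ \ge\ 2\pi\,|d_N-d_S|\ =\ \pi|k|.
\]
Equality forces $v$ to be weakly conformal and orientation-definite with $(d_N,d_S)\in\{(\tfrac k2,0),(0,-\tfrac k2)\}$; the reflection principle then identifies $v$ with a finite Blaschke product, and the rigid boundary data pins it, in a suitable stereographic chart, to $z\mapsto z^{k/2}$, respectively $z\mapsto\bar z^{k/2}$. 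Hence there are exactly two minimizing harmonic maps $Q_\infty^\pm\colon B_1\to\mcS_*$: each is corotational, hence $O(2)$-symmetric; they satisfy $Q_\infty^\pm=JQ_\infty^\mp J$ and have the form \eqref{pag7} with $w_3$ nonvanishing on $(0,1)$; and they attain a finite infimum energy $m_\infty>0$. (Any $\Z_2$-symmetric competitor would lift to an $\Sphere^1$-valued map of nonzero degree on $B_1$, which is impossible; this is also why the minimal $\Z_2$-symmetric energy blows up like $\log R$.)

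A global minimizer $Q_R$ of $\mcF[\cdot;B_R]$ exists for each $R$ by the direct method. Rescaling $x\mapsto x/R$ turns $\mcF[\cdot;B_R]$ into the Landau--de Gennes functional on $B_1$ with a factor $R^2$ in front of $f_{\rm bulk}$; using the (unscaled, smooth, $\mcS_*$-valued) map $Q_\infty^+$ as a competitor gives $\min\mcF[\cdot;B_R]\le m_\infty$. By the standard compactness theory for Landau--de Gennes minimizers in the vanishing elastic-constant limit (in the spirit of Bethuel--Brezis--H\'elein and Majumdar--Zarnescu; cf.\ \cite{BaumanParkPhillips}), along any sequence $R\to\infty$ the rescaled minimizers converge weakly in $H^1(B_1)$, and strongly away from a finite energy-concentration set, to a harmonic map $Q_\infty\colon B_1\to\mcS_*$ with $Q_\infty|_{\partial B_1}=Q_b$, and $\liminf_{R\to\infty}\min\mcF[\cdot;B_R]\ge\tfrac12\int_{B_1}|\nabla Q_\infty|^2+(\text{non-negative bubble energies})\ge m_\infty$. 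Thus the limiting energy is exactly $m_\infty$, there are no bubbles, the convergence is strong in $H^1(B_1)$, $Q_\infty$ is a \emph{minimizing} harmonic map, and by the previous paragraph $Q_\infty\in\{Q_\infty^+,Q_\infty^-\}$. (Evenness of $k$ is essential: for odd $k$ one has $m_\infty=+\infty$ and this scheme breaks down, cf.\ \cite{DRSZ2}.)

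Since $v^\pm=z^{\pm k/2}$ are $(\pm)$holomorphic, a Bochner/holomorphicity argument — holomorphic Jacobi fields over the disk subject to a homogeneous Dirichlet condition vanish identically — or, equivalently, a direct Fourier-mode-by-mode ODE analysis exploiting the corotational structure, shows that the Jacobi operator of the harmonic map energy at $Q_\infty^\pm$ has trivial kernel; thus $Q_\infty^\pm$ are non-degenerate minimizing harmonic maps. Because $D^2f_{\rm bulk}\ge0$ on $\mcS_*$ with kernel exactly the tangent space $T\mcS_*$, this non-degeneracy propagates to the rescaled functional, whose Hessian at maps $H^1$-close to $Q_\infty^\pm$ is uniformly coercive once $R$ is large; a quantitative implicit-function argument then produces, for $R>R_0$, a unique critical point $Q_R^\pm$ of $\mcF[\cdot;B_R]$ in a fixed $H^1$-neighbourhood of (the unscaled) $Q_\infty^\pm$, which is a strict local minimizer, with $Q_R^\mp=JQ_R^\pm J$. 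Each minimizer of the reduced $O(2)$-symmetric problem (the ODE system in $(w_0,w_1,w_3)$) converges strongly to one of $Q_\infty^\pm$ and, by the principle of symmetric criticality, is a critical point of the full functional, so for $R$ large it coincides with $Q_R^+$ (resp.\ $Q_R^-$); in particular $Q_R^\pm$ is $O(2)$-symmetric and, by Proposition~\ref{Prop:okrChar}, has the form \eqref{Eq:Qw0w1w3}. Conversely, by the previous paragraph every global minimizer of $\mcF[\cdot;B_R]$, up to a symmetry, converges to $Q_\infty^+$, hence for $R>R_0$ it lies in one of the two neighbourhoods and equals $Q_R^+$ or $Q_R^-$; since these have equal energy and a global minimizer exists, both are global minimizers and there are exactly two. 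I expect this transfer — combining the harmonic-map non-degeneracy, the uniform-in-$R$ invertibility of the linearized Landau--de Gennes operator, and the ``no-energy-loss'' compactness underlying the previous paragraph — to be the main technical obstacle.

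It remains to fix the sign of $w_3$. Writing $Q_R^+=w_0E_0+w_1E_1+w_3E_3$, both $\tfrac12|\nabla Q_R^+|^2$ and $f_{\rm bulk}(Q_R^+)$ depend on $w_3$ only through $w_3^2$ (mirroring the conjugation $w_3\mapsto-w_3$, i.e.\ $Q\mapsto JQJ$), so replacing $w_3$ by $|w_3|$ preserves both admissibility and energy; hence one may take $w_3\ge0$. Since $Q_R^+$ is not $\Z_2$-symmetric for $R>R_0$ (a $\Z_2$-symmetric competitor has energy $\to\infty$ while $\min\mcF[\cdot;B_R]\le m_\infty$), $w_3\not\equiv0$. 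Finally the Euler--Lagrange equation for $w_3$ is linear and homogeneous in $w_3$ (again by the $w_3\mapsto-w_3$ symmetry), of the form $-(rw_3')'+(\text{bounded potential})\,w_3=0$, so the strong maximum principle forces $w_3>0$ on $(0,R)$, completing the proof.
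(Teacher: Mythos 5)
Your overall architecture (limit harmonic map problem with exactly two minimizers $Q_*^\pm$, compactness of minimizers as $R\to\infty$, local uniqueness of critical points near each $Q_*^\pm$, identification with the $O(2)$-symmetric minimizers, and finally $w_3>0$ via the sign-invariance of the energy in $w_3$ plus the strong maximum principle) is the same as the paper's, and the first, second and last blocks are essentially sound. The genuine gap is exactly the step you yourself flag as ``the main technical obstacle'': you assert that the non-degeneracy of $Q_*^\pm$ (the paper's Lemma \ref{Lem:StrStab}) propagates to a \emph{uniformly coercive Hessian of $\mcF_\eps$ at all maps $H^1$-close to $Q_*^\pm$}, and that a quantitative implicit-function argument then yields a unique critical point in a \emph{fixed} $H^1$-neighbourhood, uniformly for small $\eps=1/R$. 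As stated this does not work. In two dimensions $H^1$ does not embed into $L^\infty$, so a map $H^1$-close to $Q_*^\pm$ can be pointwise far from $\mcS_*$ on small sets, where $D^2 f_{\rm bulk}$ is not positive semi-definite; multiplied by $\eps^{-2}$ this destroys any uniform lower bound on the second variation in a fixed $H^1$-ball. Moreover, even at the critical point the cubic remainder of $\mcF_\eps$ carries the factor $\eps^{-2}$, so the basin in which the quadratic form dominates shrinks as $\eps\to 0$; uniqueness in an $\eps$-independent neighbourhood of a fixed norm is precisely what one cannot get cheaply, and one must then check that \emph{all} global (and all symmetric) minimizers actually enter the shrinking region — a quantitative rate of convergence, not just the soft compactness you invoke.

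This is why the paper's proof of Theorem \ref{thm:MinSymmetryRes} does not run an IFT in a fixed norm: it introduces the decomposition $Q=Q_\sharp+\eps^2P$ of Lemma \ref{lem:TubNbhd}, rewrites the Euler--Lagrange system in the pair $(\psi,P)$, inverts the two linearizations $L_\parallel$ (using the strict stability of $n_*$) and $L_{\eps,\perp}$ in the $\eps$-adapted norm \eqref{Eq:H2epsNorm}, and proves uniqueness by a contraction argument (Proposition \ref{Prop:ResUniq}) in a region defined by mixed conditions $\|\psi\|_{H^2}\le 1/C_2$, $\eps^2\|P\|_{H^2}\le 1/C_2$, $\|P\|_{L^2}\le C_1$. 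Squeezing every minimizer into that region then requires the refined asymptotics of \cite{NZ13-CVPDE}: $C^{1,\alpha}$/$H^2$ convergence to $Q_*^\pm$ \emph{and} the $C^0$ bound on $X_\eps=\eps^{-2}\bigl[Q_\eps^2-\tfrac13 s_+Q_\eps-\tfrac29 s_+^2 I_3\bigr]$, which is what gives $\|P_\eps\|_{L^2}\le C$ (see \eqref{bddPeps}); note the paper's footnote that $\|Q_\eps-Q_*\|_{H^2}$ is \emph{not} $O(\eps^2)$, so there is a boundary-layer obstruction to the naive ``fixed neighbourhood'' picture in strong norms as well. Unless you supply a substitute for this quantitative local-uniqueness machinery (or an alternative, e.g.\ a genuine convexity argument valid uniformly in $\eps$), the central uniqueness claim — and with it the count of exactly two minimizers and their $O(2)$-symmetry — remains unproved in your proposal. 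The remaining ingredients you use (classification of the two limit maps via lifting and conformality, the $\log R$ divergence ruling out $\Z_2$-symmetric minimizers, the $w_3\mapsto|w_3|$ trick and the maximum principle for \eqref{Eq:w3}) are consistent with the paper and fine.
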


It is clear that the Euler-Lagrange equation \eqref{eq:EL} for $Q_R^\pm$ then reduces to a system of ODEs for $(w_0, w_1, 0, \pm w_3, 0)$ with the boundary condition $w_0(R) = -\frac{s_+}{\sqrt{6}}$, $w_1(R) = \frac{s_+}{\sqrt{2}}$ and $w_3(R) = 0$. See Remark \ref{Rem:ODESys}.

The idea of the proof of Theorem~\ref{thm:MinSymmetry}  is presented in Section~\ref{SSec:Heu}. An assumption in the above theorem concerns the radius of the domain which is taken to be large enough. This is a physically relevant assumption, capturing the most interesting physical regime of small elastic constant (as explained in \cite{gartlandscalings} and studied,  for instance, in \cite{alama2015weak, BaumanParkPhillips, canevari2015biaxiality, DRSZ2, golovaty2014minimizers} in $2D$ and \cite{canevari2017line,Ma-Za,NZ13-CVPDE} in $3D$).

We would like to draw the attention to our related uniqueness  results in a Ginzburg-Landau settings \cite{INSZ18_CRAS,INSZ_AnnENS} where the bulk potential satisfies a suitable global convexity assumption. In these articles, we established a link between the so-called non-escaping phenomenon and uniqueness of minimizers. In the context of $Q$-tensors, a non-escaping phenomenon would mean the existence of $O(2)$-symmetric critical point $Q$ such that $Q \cdot E_3$ does not change sign. While it is not hard to prove the existence of such critical points for large $R$ (see the last paragraph in the proof of Theorem \ref{thm:MinSymmetryRes}), the method in \cite{INSZ18_CRAS,INSZ_AnnENS} does not apply to the present setting as our bulk potential $f_{\rm bulk}$ does not satisfy the relevant global convexity. In a sequel to the present article, we will apply the method developed here to prove a similar uniqueness result for minimizers of a Ginzburg-Landau type energy functional where the bulk potential satisfies only a local convexity property near the limit manifold.

Our second result concerns the multiplicity of   $k$-fold $O(2)$-symmetric  critical points of $\mcF$. This is coherent with
the numerical simulations in \cite[Section $3.2$]{KRSZ} for $k=2$ and \cite[Section $2.2$]{hu2016disclination}, which observed, for large enough $R$, that there can be several distinct solutions, corresponding to  boundary conditions \eqref{BC1}.

\begin{theorem}\label{thm:FiveSol}
Let $a^2 \geq 0, b^2, c^2 >0$ be any fixed constants and $k \in 2\ZZ \setminus \{0\}$. There exists some $R_1 = R_1(a^2, b^2, c^2, k) > 0$ such that for all $R > R_1$, there exist at least five  $k$-fold $O(2)$-symmetric critical points of $\mcF[\cdot;B_R]$ subjected to the boundary condition \eqref{BC1}. At least four of these solutions are {\bf not} $\Z_2 \times O(2)$-symmetric.
\end{theorem}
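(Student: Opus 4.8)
\textbf{Proof strategy for Theorem~\ref{thm:FiveSol}.}
The plan is to produce five distinct $k$-fold $O(2)$-symmetric critical points by combining the two minimizers from Theorem~\ref{thm:MinSymmetry} with the $\Z_2\times O(2)$-symmetric ($k$-radially symmetric) critical points already known to exist, and then to verify by a degree/energy argument that at least one more genuinely non-$\Z_2$-symmetric solution appears. First I would note that Theorem~\ref{thm:MinSymmetry} already supplies two $k$-fold $O(2)$-symmetric critical points $Q_R^\pm$ of the form \eqref{pag7} with $w_3>0$ on $(0,R)$; since $Q_R^+ = J Q_R^- J \ne Q_R^-$, these are distinct and neither is $\Z_2$-symmetric. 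Next, working in the reduced $(w_0,w_1,w_3)$ formulation of Proposition~\ref{Prop:okrChar}, the $\Z_2\times O(2)$-symmetric critical points correspond exactly to solutions with $w_3\equiv 0$, i.e. $k$-radially symmetric critical points. The key point is that for large $R$ there exist at least three such solutions: this should follow from the instability result of \cite{INSZ_AnnIHP} for $|k|>1$ together with a mountain-pass / minimization argument in the class $\{w_3\equiv 0\}$. Concretely, within the $k$-radially symmetric class one can minimize $\mcF$ to get one solution $Q_*$; since $Q_*$ is unstable (for $|k|>1$, $b\ne0$) while its energy is finite, a mountain-pass construction between $Q_*$ (or between the two $\Z_2$-conjugate minimizers $Q_R^\pm$, which lie in the full $O(2)$-symmetric class and are $\Z_2$-conjugate) yields a third $k$-radially symmetric critical point of higher energy, distinct from the minimizer. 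Counting: $Q_R^+$, $Q_R^-$ (not $\Z_2$-symmetric), plus three $k$-radially symmetric solutions gives five, and four of them (the two minimizers plus two of the three radial ones, or however the bookkeeping settles) are not $\Z_2\times O(2)$-symmetric --- wait, the radial ones \emph{are} $\Z_2\times O(2)$-symmetric, so the honest count of non-$\Z_2\times O(2)$-symmetric solutions among these five is only two.

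To reach ``at least four'' non-$\Z_2\times O(2)$-symmetric solutions I would instead argue as follows. The two minimizers $Q_R^\pm$ are non-$\Z_2$-symmetric. Then I would run a mountain-pass argument \emph{within} the $k$-fold $O(2)$-symmetric class (not the radial subclass): since $Q_R^+$ and $Q_R^-$ are two strict local minima of $\mcF$ restricted to the $O(2)$-symmetric maps (strictness because, by Theorem~\ref{thm:MinSymmetry}, they are the only global minimizers and the functional is analytic / satisfies Palais--Smale on this class), the mountain-pass theorem applied on the $O(2)$-symmetric configuration space produces a critical point $Q_{\rm mp}$ with $\mcF[Q_{\rm mp}] > \mcF[Q_R^\pm]$. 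If $Q_{\rm mp}$ were $\Z_2$-symmetric it would be one of the $k$-radially symmetric critical points; if $Q_{\rm mp}$ is \emph{not} $\Z_2$-symmetric, then $J Q_{\rm mp} J$ is a fourth non-$\Z_2$-symmetric $O(2)$-symmetric critical point, distinct from $Q_{\rm mp}, Q_R^\pm$. Adding any one $k$-radially symmetric critical point (which exists for all $R$, e.g. the minimizer in the radial class) as the fifth solution completes the count: five $O(2)$-symmetric critical points, four of them not $\Z_2\times O(2)$-symmetric. The remaining case, that the mountain-pass point between $Q_R^\pm$ happens to be $\Z_2$-symmetric, must be excluded: here I would use that a $\Z_2$-symmetric ($k$-radially symmetric) critical point lying ``between'' the two $\Z_2$-conjugate minimizers in the mountain-pass sense would have to sit on the wall $\{w_3 = 0\}$, and one can estimate its energy from below --- or, alternatively, deform the mountain-pass path to decrease its maximal energy below that of every $k$-radially symmetric critical point of index $\ge 1$, using the second-variation information from \cite{INSZ_AnnIHP} that forces instability in a direction transverse to $\{w_3=0\}$.

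In more detail, the mechanism for the last point is: at a $k$-radially symmetric critical point $Q_*$, the Hessian of $\mcF$ has a negative eigendirection; by the equivariant structure of the $O(2)$-action and the splitting of perturbations into $\{w_3=0\}$-tangent and $\{w_3\}$-transverse parts, one checks (this is essentially in \cite{INSZ_AnnIHP}) that the unstable direction can be taken transverse to $\{w_3 = 0\}$, i.e. it breaks $\Z_2$-symmetry. Hence near $Q_*$ there are two downhill paths into $\{w_3 > 0\}$ and $\{w_3 < 0\}$ respectively, landing (by the gradient flow, which preserves $O(2)$-symmetry and decreases energy) near $Q_R^+$ and $Q_R^-$. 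Concatenating these gives a path from $Q_R^+$ to $Q_R^-$ whose max energy is $< \mcF[Q_*] + \epsilon$ for any $\epsilon$ near the top, and in particular the mountain-pass level satisfies $\mcF[Q_R^\pm] < c_{\rm mp} \le \mcF[Q_*]$; if one arranges $c_{\rm mp} < \min\{\mcF[Q] : Q \text{ is } k\text{-radially symmetric, index}\ge 1\}$ --- which one expects to hold for $R$ large because the energy gap between the radial minimizer and the unstable radial solutions can be controlled --- then $Q_{\rm mp}$ cannot be $\Z_2$-symmetric.

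\textbf{Main obstacle.} The hard part will be the very last step: rigorously excluding that the mountain-pass critical point between $Q_R^+$ and $Q_R^-$ is itself $\Z_2\times O(2)$-symmetric (equivalently, a $k$-radially symmetric solution), and more generally controlling the energy levels of the various $k$-radially symmetric critical points relative to the mountain-pass level for large $R$. This requires a careful quantitative analysis of the second variation at $k$-radially symmetric solutions and a construction of mountain-pass paths with explicitly bounded maximal energy --- precisely the delicate Palais--Smale / deformation estimates that also underlie the uniqueness proof of Theorem~\ref{thm:MinSymmetry}. I would expect to reuse the asymptotic machinery (as $R\to\infty$) developed there, together with the instability quantification from \cite{INSZ_AnnIHP}, rather than prove these bounds from scratch.
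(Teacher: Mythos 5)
Your overall architecture is the same as the paper's: the two non-$\Z_2$-symmetric minimizers $Q_R^\pm$ from Theorem \ref{thm:MinSymmetry}, a mountain-pass critical point obtained by connecting them \emph{inside} the $O(2)$-symmetric class $\mcA_R^{rs}$, its $\Z_2$-conjugate, and the minimizer of $\mcF$ over the $\Z_2\times O(2)$-symmetric class as the fifth solution. But the step you yourself flag as the ``main obstacle'' --- ruling out that the mountain-pass point is $\Z_2\times O(2)$-symmetric --- is exactly the heart of the proof, and your proposed mechanism for it does not close. The paper settles it by a purely energetic comparison: (i) Lemma \ref{Lem:StrongLogDiv} shows that the minimal energy over $\mcA_R^{str}$ satisfies $\alpha_R=\min_{\mcA_R^{str}}\mcFRdom \sim \frac{\pi s_+^2k^2}{2}\ln R\to\infty$, and (ii) an \emph{explicit} admissible path from $Q_R^+$ to $Q_R^-$ is constructed with energy bounded by $C(R_0^2+|k|)$ uniformly in $R$, by exploiting conformal invariance of the Dirichlet energy: for $1\le|t|\le2$ one glues the shrunken minimizer $Q_{r_1(t)}^\pm$ on $B_{r_1(t)}$ to an inverted copy of $Q_R^+$ on the annulus, and for $|t|\le1$ one linearly interpolates between $Q_{R_0}^\pm$ on the fixed small ball $B_{R_0}$ (using the uniform bound $\mcFRdom[Q_R^\pm]\le 4\pi|k|s_+^2$ and $|Q_{R_0}^\pm|\le\sqrt{2/3}\,s_+$). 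Hence the mountain-pass level is $O_k(1)<\alpha_R$ for large $R$, and the mountain-pass point cannot lie in $\mcA_R^{str}$.

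Your alternative --- descending from an unstable $k$-radially symmetric critical point along a $\Z_2$-breaking negative direction and following the gradient flow down to $Q_R^\pm$ --- is not established and has concrete holes: the negative-gradient trajectory from a perturbation of $Q_R^{str}$ need not converge to $Q_R^+$ (it could approach any other critical point, and even its convergence requires an argument); if it lands elsewhere, you still must connect the landing point to $Q_R^+$ by a path staying below $\alpha_R$, which is not automatic; and the unstable direction from \cite{INSZ_AnnIHP} being transverse to $\{w_3=0\}$, as well as the ``energy gap'' you invoke, are asserted rather than proved (note also that the index-$\ge1$ refinement is unnecessary: since every map in $\mcA_R^{str}$ has energy $\ge\alpha_R$, any bound $c_{mp}<\alpha_R$ suffices, but your construction does not yet deliver even that). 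Two smaller items the paper handles that you gloss over: the mountain-pass geometry (strictness of the local minima in $\mcA_R^{rs}$) and the Palais--Smale condition are verified by a compactness argument, and one must note that critical points of $\mcFRdom$ restricted to $\mcA_R^{rs}$ or $\mcA_R^{str}$ are genuine critical points of $\mcFRdom$ on $H^1_{Q_b}(B_R,\mcS_0)$ (via the ODE system of Remark \ref{Rem:ODESys}), since the theorem concerns the unrestricted functional.
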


The rough idea of proving Theorem~\ref{thm:FiveSol} is the following:  Theorem \ref{thm:MinSymmetry} gives us two global minimizers $Q_R^\pm$. By the mountain pass theorem, there is a mountain pass critical point, denoted  $Q^{mp}_R$ that connects these two ($\Z_2$-conjugate) minimizers $Q_R^\pm$. The main point in the proof of Theorem \ref{thm:FiveSol} is to show that the mountain pass solution $Q^{mp}_R$ does not coincide with the $k$-radially symmetric critical point $Q^{str}_R$ constructed in \cite{INSZ_AnnIHP}. This is done by an energy estimate showing in particular the existence of paths between $Q_R^\pm$ for which the energy is uniformly bounded with respect to $R$, see \eqref{Eq:MPPathEnergy}. As the maps $Q_R^\pm$, after suitably rescaled, converge to two $\mcS_*$-valued minimizing harmonic maps of  different topological nature, this highlights the difficulty of constructing that path; see Section \ref{sec:multiplesol} for a more detailed discussion. Moreover, we show that the mountain pass critical point is not $\Z_2$-symmetric, thus its $\Z_2$-conjugate $\tilde Q^{mp}_R$ is also a critical point, thus yielding  five different critical points.

In Table \ref{Table1}, we summarize the properties of the critical points from Theorem~\ref{thm:FiveSol}.\footnote{See Appendix~\ref{sec:appendixoddk} for related remarks regarding the case when $k$ is odd.}

\begin{table}[ht]\label{Table1}
\caption{Properties of critical points for even $k\not=0$ and large radius $R$} 
\vskip 0.2cm
\centering 
\begin{tabular}{c c c c c}
\hline
\hline   Critical Point & Stability & $w_3$ in \eqref{Eq:Qw0w1w3} & Symmetry & Energy as $R\to \infty$\\  [0.5ex]
\hline
 $Q_R^\pm$ & Yes & $w_3^+ = - w_3^- > 0$ & $O(2)$-symmetry & $ 4\pi s_+^2 |k|  + o_k(1) $ \\ 
 $Q_R^{str}$ & No & $w_3 \equiv 0$ & $\Z_2\times O(2)$-symmetry & $ \frac{\pi k^2 s_+^2}{2} \ln R + o_k(\ln R)$ \\
$Q_R^{mp}$ and $\tilde Q_R^{mp}$ & No & $w_3 = - \tilde w_3 \not\equiv 0$ &  $O(2)$-symmetry & $O_k(1)$  \\ [1ex]
\hline
\end{tabular}
\end{table}

The two global minimizers $Q_R^\pm$ as well as the two mountain pass critical points $Q_R^{mp}$ and $\tilde Q_R^{mp}$ are $k$-fold $O(2)$-symmetric but not $\Z_2$-symmetric. The map $Q_R^{str}$ is 
a minimizer among $\Z_2 \times O(2)$-symmetric maps.\footnote{It is an open problem if the $w_0$ and $w_1$ components of $Q_R^{str}$ satisfy $w_0 < 0$ and $w_1 > 0$. See \cite[Open problem 3.2]{INSZ_AnnIHP}.} The subscript $k$ in the little $o$-terms indicates that the rate of convergence may depend on $k$. The subscript $k$ in the big $O$-terms indicates that the implicit constant may depend on $k$. The energy of $Q_R^\pm$ is bounded from above by and converges as $R\to \infty$ to the Dirichlet energy of the $\mcS_*$-valued minimal harmonic map(s) on $B_1$, which is $4\pi s_+^2|k|$; see \eqref{Eq:UnifEnergyBnd}. The asymptotic behavior of the energy of $Q_R^{str}$ as $R\to \infty$ is proved in Lemma \ref{Lem:StrongLogDiv}. The estimate for the energy of the mountain pass solutions is given in \eqref{Eq:MPEnergy}. In addition to these solutions, we also have the non-$O(2)$-symmetric solutions constructed in  \cite{BaumanParkPhillips}, which have energy 
$O(|k| \ln R)$ for large $R$, see \cite[Theorem $B$]{BaumanParkPhillips}.
 
To dispel confusion, we note that the Ginzburg-Landau counterpart for our model is the $2D-3D$ Ginzburg-Landau model (see \cite[Theorem 1.1]{INSZ_AnnENS}). In particular, the minimal energy remains bounded as $R \rightarrow \infty$, which is contrary to the $2D-2D$ Ginzburg-Landau case where the minimal energy grows like $\ln R$ as $R \rightarrow \infty$ (see e.g. the seminal book of B\'ethuel, Brezis and H\'elein \cite{vortices} or \cite{SS}). In the $2D-3D$ case, it was shown in \cite{INSZ_AnnENS} that, for every $k \in 2\ZZ\setminus\{0\}$ and under the boundary condition \eqref{def:n}, there exists $R_* > 0$ such that the Ginzburg-Landau energy functional has a unique critical point for $R \leq R_*$ and has exactly two minimizers which `escape in the third dimension' for $R > R_*$.

\medskip

The paper is organized as follows: In Section~\ref{Sec:WkRSS} we present some basic facts about the two types of symmetry induced by the $O(2)$- and $\Z_2$-group actions, and, in particular, about $k$-fold $SO(2)$-symmetric minimizers of the Landau-de Gennes energy. Section~\ref{Sec:MinSym} contains the main part of the paper, namely, the proof of Theorem~\ref{thm:MinSymmetry}.  The overall idea and main mathematical set-up of the proof are described in the Sections~\ref{SSec:Heu} -- \ref{SSec:ELpsiP}. Sections~\ref{ssec:LinHM} -- \ref{ssec:unique} contain formulations and proofs of the auxiliary results used in Sections~\ref{SSec:ProofUniq} -- \ref{SSec:ProofUniqThm1.5} to prove Theorem~\ref{thm:MinSymmetry}. In Section~\ref{sec:multiplesol} we prove the existence of multiple critical points for large enough domains, namely Theorem~\ref{thm:FiveSol}. In Appendix~\ref{sec:appendixoddk} we provide a couple of remarks on the minimal energy and the symmetry properties of minimizers  of $\mcF[\cdot;B_R]$ for odd $k$. Finally, in the Appendices~\ref{App:Tech}, \ref{App:LpaInv}, \ref{app:Calc} we put some technical details required to prove our results.

\section{Structure of symmetric maps. Proof of Proposition \ref{Prop:okrChar} }\label{Sec:WkRSS}

We work with a moving (i.e., $x$-dependent) orthonormal basis of the space $\mcS_0$ (defined in \eqref{Def:S0}), which is compatible with the boundary condition \eqref{BC1}. We use polar coordinates in $\RR^2$, i.e., $x=(r \cos \varphi, r \sin \varphi)$ with $r>0$ and $\varphi\in [0, 2\pi)$. Let $\{e_i\}_{i=1}^3$ be the standard basis of $\RR^3$, and let
\begin{equation}
n(x) = (\cos\frac{k\varphi}{2},\sin\frac{k\varphi}{2},0), \quad m(x) =  (-\sin\frac{k\varphi}{2},\cos\frac{k\varphi}{2}, 0), \quad x\in \RR^2.
	\label{Eq:03V19-mndef}
\end{equation}
We endow $\mcS_0$ with the Frobenius scalar product of symmetric matrices $Q \cdot P = \tr(Q\,P)$ and the induced norm $|Q|=(Q\cdot Q)^{1/2}$. We define, for $x\in \RR^2$, the following orthonormal basis of $\mcS_0$:
 \begin{align} \label{basisS}
 E_0 
 	&= \sqrt{\frac{3}{2}} (e_3 \otimes e_3 - \frac{1}{3}I_3), \ 
	E_1 = \sqrt{2}(n \otimes n - \frac{1}{2} I_2), \ E_2 = \frac{1}{\sqrt{2}} (n \otimes m + m \otimes n),\\
E_3
	&= \frac{1}{\sqrt{2}}(n \otimes e_3 + e_3 \otimes n), \ E_4	= \frac{1}{\sqrt{2}}(m \otimes e_3 + e_3 \otimes m) \non.
 \end{align}
Recall that $I_3$ is the $3 \times 3$ identity matrix and $I_2 = I_3 -e_3\otimes e_3$. It should be noted that this choice of basis elements for $\mcS_0$ differs slightly from \cite{INSZ_AnnIHP, INSZ_CVPDE} where both even and odd values of $k$ are considered. This is due to the fact that $E_3$ and $E_4$ are continuous when we identify $\varphi = 0$ with $\varphi = 2\pi$ if and only if $k$ is even.

We identify a map $Q: B_R \rightarrow \mcS_0$ with a map $\bw = (w_0, \ldots, w_4): B_R \rightarrow \RR^5$ via $Q = \sum_{i = 0}^4 w_i\,E_i$. Then $|Q|^2= |\bw|^2$,
\begin{align}
\tr(Q^3)
	&=  \frac{\sqrt{6}}{12}\big[2w_0^3 - 6w_0(w_1^2 + w_2^2) + 3w_0(w_3^2 + w_4^2) \nonumber\\
			&\qquad\qquad\qquad\qquad+ 3\sqrt{3}w_1(w_3^2 - w_4^2) + 6\sqrt{3}w_2 w_3 w_4\big],\nonumber\\
|\nabla Q|^2 
	&= |\partial_r \bw|^2 + \frac{1}{r^2}\Big[|\partial_\varphi w_0|^2 + |\partial_\varphi w_1 - k w_2|^2 + |\partial_\varphi w_2 + k w_1|^2\nonumber\\
		&\qquad\qquad  + |\partial_\varphi w_3 - \frac{k}{2} w_4|^2 + |\partial_\varphi w_4 + \frac{k}{2} w_3|^2\Big],
		\label{Eq:GQ2}
\end{align}
where we have used the following identities for even $k$
\[
\partial_\varphi E_1 = k E_2,\quad \partial_\varphi E_2 = -k E_1, \quad\partial_\varphi E_3 = \frac{k}{2} E_4,\quad \partial_\varphi E_4 = -\frac{k}{2} E_3.
\]
The Landau-de Gennes energy \eqref{Eq:LdG} becomes
\begin{align}
\mcF[Q;B_R]
	= I[\bw] 
		&:= \int_{B_R} \Big\{\frac{1}{2}|\partial_r \bw|^2 + \frac{1}{2r^2}\Big[|\partial_\varphi w_0|^2 + |\partial_\varphi w_1 - k w_2|^2 + |\partial_\varphi w_2 + k w_1|^2\nonumber\\
		&\qquad\qquad  + |\partial_\varphi w_3 - \frac{k}{2} w_4|^2 + |\partial_\varphi w_4 + \frac{k}{2} w_3|^2\Big]\nonumber\\
 		&\qquad\qquad + \big(-\frac{a^2}{2} + \frac{c^2}{4}|\bw|^2\big)|\bw|^2\nonumber\\
		&\qquad\qquad - \frac{b^2\sqrt{6}}{36}\big[2w_0^3 - 6w_0(w_1^2 + w_2^2) + 3w_0(w_3^2 + w_4^2) \nonumber\\
			&\qquad\qquad\qquad\qquad+ 3\sqrt{3}w_1(w_3^2 - w_4^2) + 6\sqrt{3}w_2 w_3 w_4\big] - f_*\Big\} r\,dr\, d\varphi.
			\label{Eq:LdGW}
\end{align}

The boundary condition \eqref{BC1} becomes
\begin{equation}
\bw(x) = (-\frac{s_+}{\sqrt{6}}, \frac{s_+}{\sqrt{2}},0,0,0) \text{ on } \partial B_R.
	\label{Eq:BC1W}
\end{equation}

In the introduction, we defined a group action of $O(2)$ on $H^1(B_R,\mcS_0)$. There we viewed $O(2)$ as a direct product of $\{0,1\}$ and $SO(2)$. This naturally induces two group actions of $\{0,1\}\cong \Z_2$ and of $SO(2)$, as subgroups of $O(2)$, on $H^1(B_R,\mcS_0)$.

\begin{definition}\label{Def:kfSO2}
Let $k \in \Z\setminus\{0\}$. A map $Q \in H^1(B_R, \mcS_0)$ is said to be $\{0,1\}$-{\bf symmetric} if
\begin{equation}
Q = Q_{1,0}  \quad \textrm{ in } B_R.
	\label{Eq:05IV19-Q01sym}
\end{equation} 
A map $Q \in H^1(B_R, \mcS_0)$ is said to be $k$-{\bf fold} $SO(2)$-{\bf symmetric} if
\begin{equation}
Q=Q_{0,\psi} \quad \textrm{ in } B_R\,  \textrm{ for every } \psi \in [0,2\pi).
	\label{Eq:05IV19-QSOsym}
\end{equation} 
Here $Q_{\alpha,\psi}$ is defined by \eqref{Eq:Qapdef}.
\end{definition}

Note that the groups $\Z_2$ and $\{0,1\}$ are isomorphic, but we have deliberately distinguished the notations to avoid confusion with the $\Z_2$-action defined in the introduction. Moreover, the nature of the two group actions are somewhat different. The $\Z_2$-action is related to the reflection along $e_3$ direction of the target, while the $\{0,1\}$-action is related to the reflection along the $e_2$ direction in both the domain and the target.

\begin{definition}[{\cite[Definition 1.1]{INSZ_AnnIHP}}]\label{Def:kfZ2SO2}
Let $k \in \Z\setminus\{0\}$. A map $Q \in H^1(B_R, \mcS_0)$ is said to be $k$-{\bf radially symmetric} (or equivalently $\Z_2 \times SO(2)$-{\bf symmetric}) if $Q$ is $\Z_2$-symmetric and $k$-fold $SO(2)$-symmetric.
\end{definition}

The $k$-fold $SO(2)$-symmetry is exactly condition ${\rm \bf (H2)}$ in \cite[Definition 1.1]{INSZ_AnnIHP}. We have the following characterization.
\begin{proposition}\label{Prop:QWRSRep}
Let $R \in (0,\infty]$ and $k \in 2\ZZ\setminus \{0\}$. A map $Q \in H^1(B_R,\mcS_0)$ is \wkr if and only if it can be represented for a.e. $x = r(\cos\varphi,\sin\varphi) \in B_R$ as
\[
Q(x) = \sum_{i=0}^4 w_i(r)\,E_i \text{ for a.e. } x = r(\cos\varphi,\sin\varphi) \in B_R
\]
where $E_i$'s are given by \eqref{basisS}, $w_i = Q \cdot E_i$, $w_0 \in H^1((0,R);r\,dr)$ and $w_1, w_2, w_3, w_4 \in H^1((0,R);r\,dr) \cap L^2((0,R);\frac{1}{r}\,dr)$.
\end{proposition}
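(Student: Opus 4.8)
The plan is to characterize \wkr maps by unwinding the invariance conditions \eqref{Eq:05IV19-QSOsym} in the moving orthonormal basis $\{E_i\}_{i=0}^4$, and then to show that the resulting ``separation of variables'' $Q(x) = \sum_i w_i(r) E_i$ automatically carries the stated regularity. The key observation is that the basis $\{E_i\}$ was built precisely so that it transforms simply under the $SO(2)$-action: rotating the domain by $\psi$ moves the point $r(\cos\varphi,\sin\varphi)$ to $r(\cos(\varphi+\psi),\sin(\varphi+\psi))$, while conjugation by $\mathcal R_k(\psi)$ rotates the in-plane frame $(n,m)$ by the angle $\frac{k}{2}\psi$. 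Concretely, under the map $Q \mapsto Q_{0,\psi}$ one has $E_0 \mapsto E_0$, the pair $(E_1,E_2)$ rotates by angle $k\psi$ and the pair $(E_3,E_4)$ rotates by angle $\frac{k}{2}\psi$ (this is the infinitesimal content of the identities $\partial_\varphi E_1 = kE_2$, etc., already recorded in the excerpt). Writing $Q(r,\varphi) = \sum_i w_i(r,\varphi) E_i(\varphi)$, the invariance $Q = Q_{0,\psi}$ for all $\psi$ then translates into a system of functional equations for the coefficients $w_i(r,\varphi)$ asserting that each $w_i$ is independent of $\varphi$; I would carry this out component by component, exactly as in the proof that rotationally symmetric vector fields have radial components in a suitably co-rotating frame.

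First I would make the change of frame explicit: compute $E_i(P_2(\mathcal R_2(\psi)\tilde x))$ in terms of $E_j(x)$ and the rotation angles, and also compute $\mathcal R_k^t(\psi) E_i(\cdot) \mathcal R_k(\psi)$. Combining these gives, for the $SO(2)$-action, that the vector $(w_0, w_1, w_2, w_3, w_4)$ evaluated at the rotated point equals a block-diagonal rotation (identity on the $w_0$ slot, rotation by $k\psi$ on $(w_1,w_2)$, rotation by $\frac{k}{2}\psi$ on $(w_3,w_4)$) applied to $(w_0,w_1,w_2,w_3,w_4)$ at $x$. Demanding this for all $\psi$ forces $\partial_\varphi w_i \equiv 0$ for every $i$, i.e. each $w_i = w_i(r)$ depends on $r$ alone. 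Conversely, any $Q$ of the form $\sum_i w_i(r) E_i$ manifestly satisfies \eqref{Eq:05IV19-QSOsym} by reversing these computations, so the representation is equivalent to \wkr-symmetry. (For odd $k$ the frame $E_3, E_4$ is discontinuous across $\varphi = 2\pi$, which is why the statement is restricted to even $k$; for even $k$ everything is globally well-defined on $B_R$.)

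Second I would deal with the function spaces. Since $Q \in H^1(B_R, \mcS_0)$, we have $\int_{B_R} (|Q|^2 + |\nabla Q|^2)\,dx < \infty$; using $|Q|^2 = \sum_i w_i(r)^2$ and the formula \eqref{Eq:GQ2} for $|\nabla Q|^2$ (which, for $\varphi$-independent $w_i$, collapses to $|\partial_r \bw|^2 + \frac{1}{r^2}[k^2(w_1^2+w_2^2) + \frac{k^2}{4}(w_3^2+w_4^2)]$, the $w_0$-angular term vanishing), the finiteness of $\int_{B_R} |\nabla Q|^2\,dx = 2\pi \int_0^R (\cdots)\,r\,dr$ immediately yields $w_0 \in H^1((0,R);r\,dr)$ and $w_1,w_2,w_3,w_4 \in H^1((0,R);r\,dr)\cap L^2((0,R);\frac1r\,dr)$, because $k \neq 0$. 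The converse inclusion — that such coefficients produce an $H^1$ map — is the same computation run backwards, noting that the $L^2(\frac1r dr)$ condition is exactly what controls the angular part of the gradient near the origin. One small technical point to be careful about is justifying that an $H^1$ map on $B_R$ which is invariant under the full $SO(2)$-action can indeed be written with measurable radial coefficients $w_i(r)$ — this follows from Fubini together with the fact that the invariance holds for a.e. rotation angle and a.e. point, so the ``bad'' set has measure zero; alternatively one passes to the smooth approximation and uses density of smooth $SO(2)$-invariant maps in the $SO(2)$-invariant $H^1$ maps.

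The main obstacle, and the only step requiring genuine care rather than bookkeeping, is the measure-theoretic handling of the a.e. invariance: \eqref{Eq:05IV19-QSOsym} is an identity in $H^1$, hence only a.e. in $x$ and (a priori) for each fixed $\psi$, so one must upgrade ``for every $\psi$, $Q = Q_{0,\psi}$ a.e. in $B_R$'' to ``for a.e. $x$, $w_i(r,\varphi)$ is a.e. independent of $\varphi$.'' This is a standard Fubini/continuity-of-translation argument — the $SO(2)$-action is strongly continuous on $H^1(B_R,\mcS_0)$, so $\psi \mapsto Q_{0,\psi}$ is continuous into $H^1$, and the invariance then holds for the continuous representative in the obvious sense — but it should be written out, since the rest of the proof is essentially linear algebra plus a direct inspection of \eqref{Eq:GQ2}.
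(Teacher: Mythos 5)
Your argument is correct, but it takes a different route from the paper for the forward implication. The paper does not unwind the group action at all: it simply invokes \cite[Proposition 2.1]{INSZ_AnnIHP}, which already provides the representation of $k$-fold $SO(2)$-symmetric $H^1$ maps with radial profiles of exactly the stated regularity (in a basis built from $e_1\otimes e_3+e_3\otimes e_1$ and $e_2\otimes e_3+e_3\otimes e_2$), and then performs the one-line change of basis that converts the $\tilde w,\hat w$ terms into $\tilde w\,E_3-\hat w\,E_4$, which is where the evenness of $k$ enters. Your proposal instead re-derives that cited result from scratch: expand $Q$ in the moving frame \eqref{basisS}, use the equivariance of the frame under \eqref{Eq:Qapdef} to reduce \eqref{Eq:05IV19-QSOsym} to $\varphi$-independence of the coefficients, and handle the ``a.e.\ in $x$, for each $\psi$'' issue by continuity of the action and Fubini. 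This buys self-containedness at the price of redoing the measure-theoretic and regularity bookkeeping that the citation packages (measurable radial representatives, weak differentiability of the $w_i$, and the decomposition \eqref{Eq:GQ2} of the distributional gradient). The converse directions coincide: both check that each $E_i$ is $k$-fold $SO(2)$-symmetric and read off $Q\in H^1$ from \eqref{Eq:GQ2}.

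One bookkeeping slip in your sketch: with the moving-frame coefficients $w_i(r,\varphi)=Q\cdot E_i$, the combined action is the identity on the frame, because the conjugation by $\mathcal{R}_k^t(\psi)$ exactly undoes the rotation of $(n,m)$ caused by moving the base point; so the invariance reads $w_i(r,\varphi+\psi)=w_i(r,\varphi)$ for all $\psi$, with no block-diagonal rotation. If the relation really carried the nontrivial block rotation you state, demanding it for all $\psi$ would force the coefficients to co-rotate (as happens for coefficients in a fixed frame), not to be $\varphi$-independent. Since your conclusion matches the correct identity relation, this is a mis-statement of an intermediate step rather than a gap, but it should be corrected in a full write-up.
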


\begin{proof}
Suppose that $Q \in H^1(B_R,\mcS_0)$ is \wkr. By \cite[Proposition 2.1]{INSZ_AnnIHP}, there exist $w_0 \in H^1((0,R);r\,dr)$ and $w_1, w_2, \tilw, \haw \in H^1((0,R);r\,dr) \cap L^2((0,R);\frac{1}{r}\,dr)$ such that
\begin{align*}
Q(x)
	& = \sum_{i=0}^2 w_i(r)\,E_i 
		+ (\tiw(r)\cos \frac k 2\f+\haw(r) \sin \frac k 2\f)\frac{1}{\sqrt{2}}(e_1 \otimes e_3+e_3\otimes e_1)\\
		&\qquad +(-\haw(r)\cos \frac k 2\f+\tiw(r) \sin \frac k 2\f)\frac{1}{\sqrt{2}}\left(e_2\otimes e_3+e_3\otimes e_2\right)\\
	& = \sum_{i=0}^2 w_i(r)\,E_i 
		+ \tiw(r) E_3 - \haw(r) E_4
		\text{  for a.e. } x \in B_R.
\end{align*}
This gives the desired representation.

Consider the converse. Suppose that $Q(x) = \sum_{i=0}^4 w_i(r) E_i$. A direct check shows that the basis elements $E_0, \ldots, E_4$ are $k$-fold $SO(2)$-symmetric. Hence $Q$ is $k$-fold $SO(2)$-symmetric. If we have further that $w_0 \in H^1((0,R);r\,dr)$ and $w_1, \ldots, w_4 \in H^1((0,R);r\,dr) \cap L^2((0,R);\frac{1}{r}\,dr)$, then, by \eqref{Eq:GQ2}, $|\nabla Q|$ is square integrable over $B_R$. It follows that $Q \in H^1(B_R, \mcS_0)$.
\end{proof}

\begin{remark}\label{Rem:ODESys}
Let $Q$ be a $SO(2)$-symmetric map. $Q$ is a critical point of $\mcF$ if and only if its components $w_0, \ldots, w_4$ satisfy
\begin{align}
w_0'' + \frac{1}{r}w_0' 
	&= w_0\Big(-a^2 + c^2 |\bw|^2 -\frac{b^2}{\sqrt{6}}w_0\Big) + \frac{b^2}{\sqrt{6}}(w_1^2 + w_2^2) - \frac{b^2}{2\sqrt{6}}(w_3^2 + w_4^2),\label{Eq:w0}\\
w_1'' + \frac{1}{r}w_1' - \frac{k^2}{r^2}\,w_1 
	&= w_1\Big(-a^2 + c^2 |\bw|^2 + \frac{2b^2}{\sqrt{6}}w_0\Big)  - \frac{b^2}{2\sqrt{2}}(w_3^2 - w_4^2),\label{Eq:w1}\\
w_2'' + \frac{1}{r}w_2' - \frac{k^2}{r^2}\,w_2 
	&= w_2\Big(-a^2 + c^2 |\bw|^2 + \frac{2b^2}{\sqrt{6}}w_0\Big)  - \frac{b^2}{\sqrt{2}}w_3 \, w_4,\label{Eq:w2}\\
w_3'' + \frac{1}{r}w_3' - \frac{k^2}{4r^2}\,w_3 
	&= w_3\Big(-a^2 + c^2 |\bw|^2 - \frac{b^2}{\sqrt{6}}w_0 - \frac{b^2}{\sqrt{2}}w_1\Big)  - \frac{b^2}{\sqrt{2}} w_2\,w_4,\label{Eq:w3}\\
w_4'' + \frac{1}{r}w_4' - \frac{k^2}{4r^2}\,w_4 
	&= w_4\Big(-a^2 + c^2 |\bw|^2 - \frac{b^2}{\sqrt{6}}w_0 + \frac{b^2}{\sqrt{2}}w_1\Big)  - \frac{b^2}{\sqrt{2}} w_2\,w_3.\label{Eq:w4}
\end{align}
\end{remark}

 On the other hand, the $\{0,1\}$-symmetry imposes that the $w_2$ and $w_4$ components are odd in the $x_2$ variable. More precisely we have the following:

\begin{proposition}
Let $R \in (0,\infty]$ and suppose that $Q \in H^1(B_R, \mcS_0)$. Then $Q$ is $\{0,1\}$-symmetric if and only if
\begin{equation}
w_i(x_1,-x_2)=-w_i(x_1,x_2), \,\textrm{ for a.e. }  (x_1,x_2)\in B_R, i\in\{2,4\} 
	\label{Eq:01Char}
\end{equation}
where $w_i=Q \cdot E_i$ and the $E_i$'s are defined in \eqref{basisS}.
\end{proposition}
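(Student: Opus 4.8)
The plan is to turn the $\{0,1\}$-symmetry condition into a pointwise matrix identity for $Q$ and then decode it component by component in the moving frame $\{E_i\}$ of \eqref{basisS}. First I would unpack the action: taking $\alpha=1$ and $\psi=0$ in \eqref{Eq:Qapdef} gives $\mathcal{R}_k(0)=\mathcal{R}_2(0)=I_3$ and $P_2(L\tilde x)=(x_1,-x_2)$, hence $Q_{1,0}(x_1,x_2)=L\,Q(x_1,-x_2)\,L$. Thus $Q$ is $\{0,1\}$-symmetric if and only if $Q(x_1,x_2)=L\,Q(x_1,-x_2)\,L$ for a.e.\ $(x_1,x_2)\in B_R$.

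The main step is to track how $n$, $m$ and then the basis elements $E_i$ transform under the reflection $(x_1,x_2)\mapsto(x_1,-x_2)$, i.e.\ $\varphi\mapsto-\varphi$ in polar coordinates. Since $k$ is even, $n$ and $m$ from \eqref{Eq:03V19-mndef} are $2\pi$-periodic in $\varphi$, and directly from their definitions $n(x_1,-x_2)=L\,n(x)$ and $m(x_1,-x_2)=-L\,m(x)$. Substituting into \eqref{basisS} and using $Le_3=e_3$, $L^2=I_3$, $L\,I_2\,L=I_2$ and $L(u\otimes v)L=(Lu)\otimes(Lv)$ (which holds because $L$ is symmetric), one obtains
\[
L\,E_0\,L=E_0,\qquad L\,E_i(x_1,-x_2)\,L=E_i(x_1,x_2)\quad(i\in\{1,3\}),\qquad L\,E_i(x_1,-x_2)\,L=-E_i(x_1,x_2)\quad(i\in\{2,4\}).
\]
The parity of $k$ enters exactly here: for odd $k$ the signs attached to $E_3$ and $E_4$ would be swapped.

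Finally I would match coefficients. Expanding $Q=\sum_{i=0}^4 w_i\,E_i$ and using the frame identities above,
\[
L\,Q(x_1,-x_2)\,L = w_0(x_1,-x_2)\,E_0 + w_1(x_1,-x_2)\,E_1 + w_3(x_1,-x_2)\,E_3 - w_2(x_1,-x_2)\,E_2 - w_4(x_1,-x_2)\,E_4,
\]
with every $E_i$ on the right evaluated at $(x_1,x_2)$. Since $\{E_0(x),\dots,E_4(x)\}$ is an orthonormal basis of $\mcS_0$ for a.e.\ $x$, comparing this with $Q(x_1,x_2)=\sum_i w_i(x_1,x_2)\,E_i(x_1,x_2)$ shows that the identity from the first step holds a.e.\ if and only if $w_i$ is even in $x_2$ for $i\in\{0,1,3\}$ and odd in $x_2$ for $i\in\{2,4\}$, the latter being exactly \eqref{Eq:01Char}; the converse direction is obtained by running this computation backwards (the evenness of $w_0,w_1,w_3$ in $x_2$ being automatic in the situation where the proposition is used, since there $Q$ is also $k$-fold $SO(2)$-symmetric and so $w_0,w_1,w_3$ depend only on $|x|$). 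I do not anticipate a genuine obstacle: the only thing needing care is the bookkeeping on sets of measure zero — the singular ray $\varphi=0$ of the polar chart and the origin — but $\{E_i\}$ is bounded and measurable and all equalities above are stated only ``for a.e.\ $x$'', so the frame identities hold almost everywhere, which is all that is used.
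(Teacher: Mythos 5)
Your proof is correct and follows essentially the same route as the paper: rewrite the $\{0,1\}$-symmetry as $Q(x_1,x_2)=L\,Q(x_1,-x_2)\,L$, use $Ln(x_1,-x_2)=n(x_1,x_2)$ and $Lm(x_1,-x_2)=-m(x_1,x_2)$ (valid since $k$ is even) to obtain $LE_i(x_1,-x_2)L=E_i(x_1,x_2)$ for $i\in\{0,1,3\}$ and $=-E_i(x_1,x_2)$ for $i\in\{2,4\}$, and then compare coefficients in the orthonormal frame. Your side remark on the converse is apt: matching coefficients really gives evenness of $w_0,w_1,w_3$ in $x_2$ together with the oddness of $w_2,w_4$, a point the paper's proof leaves implicit and which, as you observe, is harmless where the proposition is applied because there the remaining components are radial.
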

\begin{proof}

The $\{0,1\}$-symmetry is equivalent to $Q(x)=LQ(P_2(L\tilde x))L$ for a.e. $x\in B_R$, where $P_2$, $L$ and $\tilde x$ are as in \eqref{Eq:Qapdef}. This means
\[
\sum_{i=0}^4 w_i(x_1,x_2)E_i(x_1,x_2)=\sum_{i=0}^4 w_i(x_1,-x_2)LE_i(x_1,-x_2)L.
\] 
Noting that $Lm(x_1,-x_2) = -m(x_1,x_2)$, $L n(x_1,-x_2) = n(x_1,x_2)$,
\begin{align*}
LE_i(x_1,-x_2)L
	&=E_i(x_1,x_2) \text{ for } i \in \{0,1,3\},\\
LE_j(x_1,-x_2)L
	&=-E_j(x_1,x_2)\text{ for } j \in \{2,4\},
\end{align*}
we obtain the conclusion.
\end{proof}

\begin{proof}[Proof of Proposition \ref{Prop:okrChar}]
The $k$-fold $SO(2)$-symmetry implies, by the Proposition~\ref{Prop:QWRSRep}, that the components $w_i, i\in\{0,\dots,4\}$ are radial, hence, in particular, we have $w_2(x_1,-x_2)=w_2(x_1,x_2)$ and $w_4(x_1,-x_2)=w_4(x_1,x_2)$. Since we also have $\{0,1\}$-symmetry, the relations \eqref{Eq:01Char} hold, which lead to  $w_2=w_4\equiv 0$, as claimed.
\end{proof}

\begin{remark}\label{rmk:kradsym}
We recall \cite[Corollary 2.2]{INSZ_AnnIHP} that, for $k \in \ZZ \setminus \{0\}$, $k$-radially symmetric maps (i.e., $k$-fold $\Z_2\times SO(2)$-symmetric maps) are of the form
\[
Q(x) =w_0(r)E_0+w_1(r)E_1+w_2(r)E_2.
\]
For $k\in 2\ZZ\setminus \{0\}$, note the difference between the non-zero components of $O(2)$-symmetric maps (i.e., $\{0,1\}\times SO(2)$-symmetric maps) and $k$-radially symmetric maps: $O(2)$-symmetric maps have components $w_0, w_1$ and $w_3$ while $k$-radially symmetric maps have components $w_0$, $w_1$ and $w_2$.
\end{remark}

\begin{remark}\label{rem:kOdd}
Let $R \in (0,\infty]$, $k$ be an odd integer and $Q \in H^1(B_R,\mcS_0)$. Then $Q$ is $k$-fold $O(2)$-symmetric if and only if
\[
Q(x) =w_0(r)E_0+w_1(r)E_1
\]
where $w_0 \in H^1((0,R);r\,dr)$ and $w_1 \in H^1((0,R);r\,dr) \cap L^2((0,R);\frac{1}{r}\,dr)$. Indeed, by \cite[Proposition 2.1]{INSZ_AnnIHP}, the $k$-fold $SO(2)$-symmetry implies $Q = w_0(r)E_0+w_1(r)E_1 + w_2(r)E_2$. Then $\{0,1\}$-symmetry implies by \eqref{Eq:01Char} that $w_2 \equiv 0$. The converse is clear (cf. \eqref{Eq:GQ2}).
\end{remark}

We also have the following characterization of $\Z_2$-symmetry:

\begin{proposition}\label{Prop:Z2Char}
Let $R \in (0,\infty]$ and suppose that $Q\in H^1(B_R, \mcS_0)$. Then the following conditions are equivalent.
\begin{enumerate}
\item $Q$ is $\Z_2$-symmetric.
\item $e_3=(0,0,1)$ is an eigenvector of $Q(x)$ for almost all $x \in B_R$.\footnote{This is the assumption {\rm \bf (H1)} in \cite[Definition 1.1]{INSZ_AnnIHP}}
\item $Q(x) = \sum_{i=0}^2 w_i(x) E_i$ for almost all $x \in B_R$.
\end{enumerate}
\end{proposition}
\begin{proof} We know that any $Q \in H^1(B_R,\mcS_0)$ can be represented as $Q(x)=\sum_{i=0}^4 w_i(x) E_i$. 

\medskip
\noindent {\it Step 1. We prove ($1 \implies 2$)}. Suppose $Q$ is $\Z_2$-symmetric. Then 
$$
Q(x) = \sum_{i=0}^4 w_i(x) E_i = J Q(x) J =\sum_{i=0}^4 w_i(x) J E_i J = \sum_{i=0}^2 w_i(x) E_i - w_3(x) E_3 - w_4(x) E_4.
$$
Therefore we obtain $w_3=w_4=0$, $Q(x) = \sum_{i=0}^2 w_i(x) E_i$ and hence $e_3$ is an eigenvector of $Q(x)$ for a.e. $x \in B_R$.  

\medskip
\noindent {\it Step 2. We prove ($2 \implies 3$)}. Assume now that $e_3$ is an eigenvector of $Q(x)$ for a.e. $x \in B_R$. Therefore there exists $\lambda(x)$ such that 
$$
\lambda(x) e_3 = Q(x) e_3 = \sum_{i=0}^4 w_i(x) E_i e_3 =  \sqrt{\frac23} w_0(x) e_3 +  \frac{1}{\sqrt{2}} w_3(x) n +  \frac{1}{\sqrt{2}}  w_4(x) m.
$$
Since $e_3, n$ and $m$ form an orthonormal basis of $\RR^3$, it is clear that $w_3(x)=w_4(x)=0$ a.e. $x \in B_R$.

\medskip
\noindent {\it Step 3. We prove ($3 \implies 1$)}. Assume that $Q(x) = \sum_{i=0}^2 w_i(x) E_i$ then it is straightforward to check that $Q = J Q J$, i.e. $Q$ is $\Z_2$-symmetric.
\end{proof}

We now give a characterization of $\Z_2 \times O(2)$-symmetric maps.

\begin{proposition}\label{prop:Z2O2}
Let $R \in (0,\infty]$ and $k \in 2\ZZ\setminus \{0\}$. A map $Q \in H^1(B_R,\mcS_0)$ is $\Z_2 \times O(2)$-symmetric if and only if
\[
Q(x) =w_0(r)E_0+w_1(r)E_1 \text{ for a.e. } x = r(\cos\varphi,\sin\varphi) \in B_R
\]
where $w_0 \in H^1((0,R);r\,dr)$ and $w_1\in H^1((0,R);r\,dr) \cap L^2((0,R);\frac{1}{r}\,dr)$.

\end{proposition}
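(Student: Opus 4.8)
The plan is to combine the two characterizations that have already been established. By Definition~\ref{prop:Z2O2} (more precisely, by Definition~\ref{Def:kSym} and Definition~\ref{Def:kfO2}), a map $Q \in H^1(B_R,\mcS_0)$ is $\Z_2 \times O(2)$-symmetric precisely when it is simultaneously $k$-fold $O(2)$-symmetric and $\Z_2$-symmetric. So I would invoke Proposition~\ref{Prop:okrChar} to describe the $k$-fold $O(2)$-symmetric maps and Proposition~\ref{Prop:Z2Char} to describe the $\Z_2$-symmetric maps, and then intersect the two descriptions.

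First, assume $Q$ is $\Z_2 \times O(2)$-symmetric. Since $Q$ is $k$-fold $O(2)$-symmetric and $k \in 2\ZZ \setminus \{0\}$, Proposition~\ref{Prop:okrChar} gives
\[
Q(x) = w_0(r) E_0 + w_1(r) E_1 + w_3(r) E_3 \quad \text{for a.e. } x = r(\cos\varphi,\sin\varphi) \in B_R,
\]
with $w_0 \in H^1((0,R);r\,dr)$ and $w_1, w_3 \in H^1((0,R);r\,dr) \cap L^2((0,R);\frac{1}{r}\,dr)$. Since $Q$ is also $\Z_2$-symmetric, Proposition~\ref{Prop:Z2Char} (equivalence of (1) and (3)) tells us that the $E_3$ and $E_4$ components of $Q$ vanish a.e., i.e. $w_3 \equiv 0$. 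Hence $Q(x) = w_0(r) E_0 + w_1(r) E_1$ with the stated regularity, which is the desired form.

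Conversely, suppose $Q(x) = w_0(r) E_0 + w_1(r) E_1$ with $w_0 \in H^1((0,R);r\,dr)$ and $w_1 \in H^1((0,R);r\,dr) \cap L^2((0,R);\frac{1}{r}\,dr)$. Then $Q \in H^1(B_R,\mcS_0)$ by \eqref{Eq:GQ2} (the relevant terms involving $w_0$ and $w_1$ are finite under these hypotheses, as in the converse direction of Proposition~\ref{Prop:QWRSRep}). Since its only nonzero components are among $w_0, w_1, w_2$ (indeed $w_2 \equiv 0$), Proposition~\ref{Prop:Z2Char} shows $Q$ is $\Z_2$-symmetric; and since it has the form \eqref{Eq:Qw0w1w3} with $w_3 \equiv 0$, Proposition~\ref{Prop:okrChar} shows $Q$ is $k$-fold $O(2)$-symmetric. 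By Definition~\ref{Def:kSym}, $Q$ is $\Z_2 \times O(2)$-symmetric, completing the proof. There is no real obstacle here; the only thing to be slightly careful about is bookkeeping the regularity classes consistently between the two propositions and checking that the membership $Q \in H^1(B_R,\mcS_0)$ in the converse follows from \eqref{Eq:GQ2}.
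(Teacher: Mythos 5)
Your proof is correct and follows essentially the same route as the paper: combine Proposition~\ref{Prop:okrChar} (to reduce to the $w_0,w_1,w_3$ form) with Proposition~\ref{Prop:Z2Char} (to kill $w_3$), the converse being immediate. The paper simply states ``the converse is clear,'' while you spell out the $H^1$ membership via \eqref{Eq:GQ2}; that is a harmless elaboration, not a different argument.
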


\begin{proof}
By Proposition \ref{Prop:okrChar}, the $O(2)$-symmetry implies
 \[
Q(x) =w_0(r)E_0+w_1(r)E_1+w_3(r)E_3
\]
where $w_0 \in H^1((0,R);r\,dr)$ and $w_1,  w_3\in H^1((0,R);r\,dr) \cap L^2((0,R);\frac{1}{r}\,dr)$.  By Proposition~\ref{Prop:Z2Char} the $\Z_2$-symmetry implies that $w_3 \equiv 0$. The converse is clear.
\end{proof}

For $k \in 2\ZZ\setminus\{0\}$, we next note a connection of $SO(2)$-symmetric and $O(2)$-symmetric minimizers: Under an $O(2)$-symmetric boundary condition, in particular \eqref{BC1}, $SO(2)$-symmetric minimizers of $\mcF$ are in fact $O(2)$-symmetric. We do not know however if this remains true for all $SO(2)$-symmetric critical points. See \cite[Proposition 1.3 and Remark 1.4]{INSZ_AnnIHP} for a related statement that $\Z_2 \times SO(2)$ critical points are in fact $\Z_2 \times O(2)$-symmetric.

\begin{proposition}\label{wrmin}
Let $R \in (0,\infty)$ and $k \in 2\ZZ\setminus \{0\}$. If $Q \in H^1(B_R,\mcS_0)$ is a minimizer of $\mcF[\cdot; B_R]$ in the set of all \wkr $Q$ tensors satisfying an $O(2)$-symmetric boundary condition, then $Q$ satisfies \eqref{eq:EL} and
\[
Q(x) = w_0(|x|)\,E_0 + w_1(|x|)\,E_1 + w_3(|x|)\,E_3 \text{ for all } x \in B_R.
\]
In addition, if the boundary data is $\Z_2 \times O(2)$-symmetric \footnote{Note that, unlike in \eqref{BC1}, we are not assuming that the boundary data be uniaxial in this statement.}, then $\tilde Q =JQJ = w_0\,E_0 + w_1\,E_1 - w_3\,E_3$ is also a minimizer of $\mcF[\cdot; B_R]$ in the same set of competitors.
\end{proposition}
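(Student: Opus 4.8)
The plan is to establish the representation of a minimizer $Q$ among \wkr maps by an internal variation / competitor argument exploiting the $\{0,1\}$-symmetrization, and then to obtain the Euler--Lagrange equation \eqref{eq:EL} by noting that the minimizer is in fact critical in the full space $H^1(B_R,\mcS_0)$ (not merely among symmetric competitors) via a principle of symmetric criticality. First, I would recall from Proposition \ref{Prop:QWRSRep} that any \wkr map $Q$ has the form $Q=\sum_{i=0}^4 w_i(r)E_i$. Given a \wkr competitor $Q$, I would introduce its $\{0,1\}$-symmetrization $\hat Q := \tfrac12(Q + Q_{1,0})$; however, since the energy is not convex, averaging is not directly energy-decreasing, so instead I would argue as follows: among \wkr maps, consider the two competitors $Q$ and $Q_{1,0}$. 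In components, $Q_{1,0}$ amounts to flipping the signs of $w_2$ and $w_4$ (this is exactly the content of the $\{0,1\}$-action on radial profiles, cf. the computation $LE_jL=-E_j$ for $j\in\{2,4\}$ in the proof of the proposition before Proposition \ref{Prop:okrChar}, combined with radiality). Since the boundary data is $O(2)$-symmetric, $Q_{1,0}$ satisfies the same boundary condition and $\mcF[Q_{1,0};B_R]=\mcF[Q;B_R]$, so $Q_{1,0}$ is also a minimizer.

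The key step is to show that a minimizer necessarily has $w_2\equiv w_4\equiv 0$. For this I would look at the energy \eqref{Eq:LdGW} restricted to \wkr maps, i.e. with all $w_i=w_i(r)$: the Dirichlet part contributes $\tfrac12|\partial_r\bw|^2 + \tfrac{1}{2r^2}(k^2 w_1^2 + k^2 w_2^2 + \tfrac{k^2}{4}w_3^2 + \tfrac{k^2}{4}w_4^2)$ (the $\partial_\varphi$ terms drop since the $w_i$ are radial), and the potential part, by \eqref{Eq:LdGW}, depends on $w_2,w_4$ only through $w_1^2+w_2^2$, $w_3^2+w_4^2$, $w_1(w_3^2-w_4^2)$ and $w_2 w_3 w_4$. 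I would then compare with the competitor obtained by replacing $(w_2,w_4)$ by $(0,0)$. This kills the positive gradient terms $\tfrac{k^2}{2r^2}w_2^2$ and $\tfrac{k^2}{8r^2}w_4^2$ and modifies the potential; the delicate point is that setting $w_4=0$ changes $w_3^2-w_4^2$ to $w_3^2$ and removes $w_2w_3w_4$, so one must check the net effect on the potential is favorable. A cleaner route, avoiding case analysis of the cubic term, is to instead invoke the $\{0,1\}$-symmetrization at the level of critical points: since $Q$ is a minimizer among \wkr maps and the \wkr class together with the boundary condition is invariant under the full $O(2)$-action (which normalizes $SO(2)$), the principle of symmetric criticality (or a direct first-variation computation using competitors of the form $Q + t\phi$ with $\phi$ \wkr, plus the already-known fact from \cite{INSZ_AnnIHP} relating symmetric critical points to ODE systems) shows $Q$ solves the reduced ODE system of Remark \ref{Rem:ODESys}. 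Then the pair $(w_2,w_4)\equiv(0,0)$ is forced: I would argue that if $Q$ is a minimizer then so is $Q_{1,0}$ (sign-flip of $w_2,w_4$), and by a convexity/strict-subsolution argument on the linear-in-$(w_2,w_4)$ structure — or, most simply, by the uniqueness for the Cauchy problem for the linear homogeneous system satisfied by $(w_2,w_4)$ once $w_0,w_1,w_3$ are frozen (equations \eqref{Eq:w2},\eqref{Eq:w4} are linear in $(w_2,w_4)$ with zero boundary data at $r=R$ for $O(2)$-symmetric boundary conditions) — one concludes $w_2\equiv w_4\equiv 0$ provided the averaged map $\hat Q = \tfrac12(Q+Q_{1,0})$, which has $w_2=w_4=0$, has energy no larger than that of $Q$; and indeed with $w_2=w_4=0$ all the $w_2,w_4$-dependent potential terms vanish while the gradient strictly decreases unless $w_2\equiv w_4\equiv 0$ already. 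I would write this last comparison out carefully since it is the crux.

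Once $w_2\equiv w_4\equiv 0$ is established, the representation $Q=w_0E_0+w_1E_1+w_3E_3$ with $w_0\in H^1((0,R);r\,dr)$ and $w_1,w_3\in H^1((0,R);r\,dr)\cap L^2((0,R);\tfrac1r dr)$ follows directly from Proposition \ref{Prop:QWRSRep}; this is exactly the \okr form of Proposition \ref{Prop:okrChar}. To get \eqref{eq:EL}, I would note that, being a minimizer of $\mcF$ among \wkr maps and with $w_2=w_4=0$, $Q$ is a minimizer among \okr maps, hence an \okr critical point; by the principle of symmetric criticality for the compact group $O(2)$ acting on $H^1(B_R,\mcS_0)$ (the action is isometric and preserves $\mcF$ and the affine boundary constraint), $Q$ is a critical point of $\mcF$ in all of $H^1(B_R,\mcS_0)$ subject to \eqref{BC1}, i.e. solves \eqref{eq:EL}. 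Finally, for the last assertion: if the boundary data is $\Z_2\times O(2)$-symmetric, then $JQJ$ also satisfies the same boundary condition (Proposition \ref{Prop:Z2Char} gives that $\Z_2\times O(2)$-symmetric boundary data is fixed by $J\cdot J$), and since $\mcF[JQJ;B_R]=\mcF[Q;B_R]$ (the potential $f_{\rm bulk}$ is invariant under conjugation by $J\in O(3)$, and the Dirichlet energy is invariant under any constant orthogonal conjugation), $JQJ$ is again a minimizer in the same competitor class; in components $JE_0J=E_0$, $JE_1J=E_1$, $JE_3J=-E_3$, so $JQJ=w_0E_0+w_1E_1-w_3E_3$, as claimed.

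The main obstacle is the comparison argument showing $w_2\equiv w_4\equiv 0$ for a minimizer: because $f_{\rm bulk}$ contains the cubic term, the naive averaging $Q\mapsto\tfrac12(Q+Q_{1,0})$ need not decrease the bulk energy pointwise, so one must track precisely how the bulk terms $w_0(w_3^2+w_4^2)$, $w_1(w_3^2-w_4^2)$, $w_2w_3w_4$ change when $(w_2,w_4)$ is set to $0$, and combine this with the strict decrease of the Dirichlet term $\tfrac{k^2}{2r^2}w_2^2+\tfrac{k^2}{8r^2}w_4^2$ to get an overall decrease — using, if needed, that along a minimizer the remaining profiles $w_0,w_1,w_3$ are bounded (a standard maximum-principle a priori bound for solutions of \eqref{eq:EL}) so that the lower-order modifications are controlled; the cleanest formulation is to exploit that equations \eqref{Eq:w2}--\eqref{Eq:w4} are \emph{linear and homogeneous} in $(w_2,w_4)$ once $(w_0,w_1,w_3)$ are fixed, with vanishing Cauchy data forced by regularity at $r=0$ and the $O(2)$-symmetric boundary condition at $r=R$, whence $(w_2,w_4)\equiv 0$ by ODE uniqueness.
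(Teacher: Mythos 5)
The core of this proposition---that a minimizer among $SO(2)$-symmetric maps has $w_2\equiv w_4\equiv 0$---is exactly the step you flag as ``the crux'', and neither of your two routes closes it. Your fallback comparison (replace $(w_2,w_4)$ by $(0,0)$) does not obviously decrease the energy: the quartic term in \eqref{Eq:LdGW} has the sign-indefinite coefficient $-\frac{a^2}{2}+\frac{c^2}{4}|\bw|^2$, so lowering $|\bw|^2$ can \emph{raise} that term, and the cubic contributions $-6w_0w_2^2$, $3w_0w_4^2$, $-3\sqrt3\,w_1w_4^2$, $6\sqrt3\,w_2w_3w_4$ have no definite sign either; you acknowledge this but do not resolve it. The paper's proof avoids the issue by a different, norm-preserving substitution: it compares $\bw$ with $(w_0,\sqrt{w_1^2+w_2^2},0,\sqrt{w_3^2+w_4^2},0)$, which leaves $|\bw|^2$, $w_1^2+w_2^2$, $w_3^2+w_4^2$ and the angular gradient terms unchanged, decreases the radial Dirichlet part, and handles the only remaining cross term via $w_1(w_3^2-w_4^2)+2w_2w_3w_4\le\sqrt{w_1^2+w_2^2}\,(w_3^2+w_4^2)$; minimality then forces equality, and the conclusion comes from a rigidity analysis (constant phases for $(w_1,w_2)$ and $(w_3,w_4)$, a colinearity condition, the strong maximum principle applied to the reduced equations \eqref{Eq:w2}/\eqref{Eq:w4}, and, in the degenerate case $w_1(R)=w_3(R)=0$, the calculus Lemma \ref{Lem:gMin}), split into four cases according to the boundary values.

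Your ``cleanest formulation'' is a genuine error rather than a shortcut: the conditions $w_2=w_4=0$ at $r=0$ (regularity) and at $r=R$ (the $O(2)$-symmetric boundary data) are Dirichlet data at the two endpoints, not Cauchy data, so uniqueness for the linear homogeneous system in $(w_2,w_4)$ does not follow from ODE theory. A two-point homogeneous Dirichlet problem can admit nontrivial solutions, and no maximum principle rescues it here because the zeroth-order coefficient $-a^2+c^2|\bw|^2+\tfrac{2b^2}{\sqrt6}w_0$ in \eqref{Eq:w2} need not be nonnegative. Indeed, if this argument were valid it would show that \emph{every} $SO(2)$-symmetric critical point with $O(2)$-symmetric boundary data is $O(2)$-symmetric, which the authors explicitly state (in the paragraph preceding the proposition) they do not know; minimality must enter in an essential way, as it does in the paper. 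The remaining parts of your proposal are fine: criticality in the full space follows from symmetric criticality or, as the paper does, from the direct computation behind Remark \ref{Rem:ODESys}, and your argument that $JQJ$ is again a minimizer under $\Z_2\times O(2)$-symmetric boundary data (invariance of $\mcF$ under conjugation by $J$ and $JE_3J=-E_3$) matches the paper.
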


\begin{proof} Write $Q(x) = \sum_{i=0}^4 w_i(r)\,E_i$ as in Proposition \ref{Prop:QWRSRep} and let $\bw = (w_0, \ldots, w_4)$. We will only consider the case  where $w_1(R) \geq 0$ and $w_3(R) \geq 0$. (Note that the $O(2)$-symmetry of the boundary data implies that $w_2(R) = w_4(R) = 0$.) The other cases are treated similarly.

Observe that
\[
w_1(w_3^2 - w_4^2) + 2w_2 w_3 w_4 \leq \sqrt{w_1^2 + w_2^2} \sqrt{(w_3^2 -w_4^2)^2 + (2w_3 w_4)^2} = \sqrt{w_1^2 + w_2^2} (w_3^2 + w_4^2).
\]
Therefore, by \eqref{Eq:LdGW}, 
\[
I[\bw] \geq I[w_0, \sqrt{w_1^2 + w_2^2}, 0, \sqrt{w_3^2 + w_4^2}, 0].
\]
As $\bw$ is a minimizer for $I$, we have equality in the above inequalities, which leads to
\begin{align}
|\partial_r w_{j}|^2 + |\partial_r w_{j+1}|^2 &= \Big|\partial_r \sqrt{w_{j}^2 + w_{j+1}^2}\Big|^2 \text{ for } j \in \{1,3\},
	\label{Eq:Sat1}\\
(w_1,w_2) \text{ and } (w_3^2 - w_4^2, 2w_3w_4) &\text{ are colinear with a non-negative colinear factor}.
	\label{Eq:Sat2}
\end{align}

From \eqref{Eq:Sat1}, we deduce that there exist constant unit vectors $(\cos \alpha,\sin\alpha)$ and $(\cos\beta,\sin\beta)$, $\alpha,\beta \in [0,2\pi)$, and scalar functions $\lambda$ and $\mu$ such that
\[
(w_1,w_2) = \lambda(r)(\cos \alpha,\sin\alpha) \text{ and } (w_3,w_4) = \mu(r)(\cos \beta,\sin\beta).
\]
We recall that boundary conditions are $k$-fold $O(2)$-symmetric and therefore we have $w_2(R)=w_4(R)=0$.

\medskip
\noindent\underline{Case 1:} $w_1(R) > 0$ and $w_3(R) > 0$. In this case, we have $\lambda(R) \neq 0$ and $\mu(R) \neq 0$, which implies that $\sin\alpha = \sin\beta = 0$, and hence $w_2 \equiv w_4 \equiv 0$. 

\medskip
\noindent\underline{Case 2:} $w_1(R) > 0$ and $w_3(R) = 0$.\footnote{For example, \eqref{BC1} falls into this case.} This implies that $\lambda(R) \neq 0$ which leads to $\sin\alpha = 0$, and $w_2 \equiv 0$. We need to show that $w_4 \equiv 0$. Since $w_2 = 0$, we have $I[\bw] = I[w_0, w_1, 0, w_3, |w_4|]$ and so $(w_0, w_1, 0, w_3, |w_4|)$ is $I$-minimizing. Thus, we may assume without loss of generality that $w_4 \geq 0$. As $w_2 \equiv 0$, equation \eqref{Eq:w4} reduces to
\[
w_4'' + \frac{1}{r}w_4' - \frac{k^2}{4r^2}\,w_4 
	= w_4\Big(-a^2 + c^2 |\bw|^2 - \frac{b^2}{\sqrt{6}}w_0 + \frac{b^2}{\sqrt{2}}w_1\Big).
\]
By the strong maximum principle we thus have either $w_4 \equiv 0$ or $w_4 > 0$ in $(0,R)$. Assume by contradiction that $w_4 > 0$ in $(0,R)$. As $w_1(R) > 0$, there is some $R' < R$ such that $w_1 > 0$ in $(R',R)$. By \eqref{Eq:Sat2}, we have $w_1 w_3 w_4 \equiv 0$, and so $w_3 \equiv 0$ in $(R',R)$. But this implies that $(w_1,0)$ is not positively colinear to $(-w_4^2,0)$ in $(R',R)$ which contradicts \eqref{Eq:Sat2}.

\medskip
\noindent\underline{Case 3:} $w_1(R) = 0$ and $w_3(R) > 0$. This implies that $\mu(R) \neq 0$, $\sin\beta = 0$, and $w_4 \equiv 0$. We need to show that $w_2 \equiv 0$. By \eqref{Eq:Sat2}, we have $w_2 w_3 \equiv 0$. As $w_4 \equiv 0$, we have $I[\bw] = I[w_0, w_1, |w_2|, w_3, 0]$ and so $(w_0, w_1, |w_2|, w_3, 0)$ is $I$-minimizing. Thus, we may assume without loss of generality that $w_2 \geq 0$. Also as $w_4 \equiv 0$, equation \eqref{Eq:w2} reduces to
\[
w_2'' + \frac{1}{r}w_2' - \frac{k^2}{r^2}\,w_2
	= w_2\Big(-a^2 + c^2 |\bw|^2 + \frac{2b^2}{\sqrt{6}}w_0\Big),
\]
which implies, in view of the strong maximum principle, that $w_2 \equiv 0$ or $w_2 > 0$ in $(0,R)$. If the latter holds, then as $w_2 w_3 \equiv 0$, we would have $w_3 \equiv 0$, which would contradict the fact that $w_3(R) > 0$. We thus have that $w_2 \equiv 0$.

\medskip
\noindent\underline{Case 4:} $w_1(R) = 0$ and $w_3(R) = 0$. We have
\begin{align*}
&2w_0^3 - 6w_0(w_1^2 + w_2^2) + 3w_0(w_3^2 + w_4^2) + 3\sqrt{3} w_1(w_3^2 - w_4^2) + 6\sqrt{3} w_2 w_3 w_4 \\
	&\qquad = 2w_0^3 - 6w_0 \lambda^2 + 3w_0 \mu^2 + 3\sqrt{3} \lambda \mu^2 \cos(\alpha - 2\beta)
	 \leq g(w_0,|\lambda|,\mu)
\end{align*}
where
\[
g(x,y,z)= 2x^3 - 6x y^2 + 3xz^2 + 3\sqrt{3} yz^2, \qquad (x,y,z) \in \RR^3.
\]
By Lemma \ref{Lem:gMin} in Appendix \ref{app:Calc}, we have $g(x,y,z) \leq 2(x^2 + y^2 + z^2)^{3/2}$. It thus follows that
\[
I[\bw] \geq I[|\bw|,0,0,0,0].
\]
Since $\bw$ is $I$-minimizing, we hence have $I[\bw] = I[|\bw|,0,0,0,0]$, which implies $w_1 \equiv w_2 \equiv w_3 \equiv w_4 \equiv 0$.
\end{proof}

In Table \ref{Table2}, we summarize the characterization of various symmetries that we introduced for maps in $H^1(B_R, \mcS_0)$ (in particular, critical points or minimizers of $\mcF$) in terms of components $w_0, \dots, w_4$ and $k\neq 0$ even.
\begin{table}[ht]\label{Table2}
\caption{Characterization of symmetries in the components $w_0, \dots, w_4$ and $k\in 2\ZZ\setminus\{0\}$} 
\vskip 0.2cm
\centering 
\begin{tabular}{cc}
\hline
\hline                        
Symmetries in $H^1(B_R, \mcS_0)$ & Radial components \\  [0.5ex]
\hline
 $SO(2)$-symmetric map & $w_0, \dots, w_4$ \\ 
 $O(2)$-symmetric map & $w_0, w_1, w_3$ \\ 
 $\Z_2 \times SO(2)$-symmetric (i.e., $k$-radially symmetric) map & $w_0, w_1, w_2$ \\
   $\Z_2\times O(2)$-symmetric map & $w_0, w_1$ \\ 
 $SO(2)$-symmetric minimizer & $w_0, w_1, w_3$ \\ 
 $\Z_2 \times SO(2)$-symmetric ($k$-radially symmetric) critical point & 
 $w_0, w_1$ \\  [1ex]
\hline
\end{tabular}
\end{table}

\section{Minimizers with $k$-fold $O(2)$-symmetry on large disks}\label{Sec:MinSym}
In this section, we provide the proof of Theorem \ref{thm:MinSymmetry}. 
Instead of working directly with the functional $\mcF[\cdot;B_R]$ defined in \eqref{Eq:LdG} we rescale the domain $B_R$ to the unit disc $D \equiv B_1$. We work with a new parameter $\eps=\frac{1}{R}$ and the following rescaled Landau-de Gennes energy functional 
\be\label{def:mcF}
\mcF_\eps[Q] := \int_{D} \Big[\frac{1}{2}|\nabla Q|^2 + \frac{1}{\eps^2} f_{\rm bulk}(Q)\Big]\,dx, 
\ee
defined on the set
\[
H_{Q_b}^1(D,\mcS_0) = \Big\{Q \in H^1(D,\mcS_0): Q = Q_b \text{ on } \partial D\Big\}
\]
with $Q_b$ given by \eqref{Qbdef}. Throughout the section $k$ is an even non-zero integer.

The Euler-Lagrange equation for $\mcF_\eps$ reads
\begin{equation}
\eps^2 \Delta Q = -a^2 Q - b^2[Q^2 - \frac{1}{3}\tr(Q^2)I_3] + c^2 \tr(Q^2)Q \text{ in } D.
	\label{Eq:1.6res}
\end{equation}

The statement on the uniqueness up to $\Z_2$-conjugation for minimizers of $\mcF[\cdot;B_R]$ in Theorem \ref{thm:MinSymmetry} is equivalent to the following.

\begin{theorem} \label{thm:MinSymmetryRes}
Let $a^2 \geq 0, b^2, c^2 >0$ be any fixed constants and $k \in 2\ZZ \setminus \{0\}$. There exists some $\eps_0 = \eps_0(a^2, b^2, c^2, k) > 0$ such that for all $\eps \in (0,\eps_0)$, there exist exactly two minimizers $Q_\eps^\pm$ of $\mcF_\eps$ in $H_{Q_b}^1(D,\mcS_0)$ and these minimizers are $k$-fold $O(2)$-symmetric but not $\Z_2$-symmetric. Moreover, they are $\Z_2$-conjugate, namely, $Q_\eps^\pm=JQ_\eps^\mp J\neq Q_\eps^\mp$ with $J$ as defined in \eqref{def:J}. 
\end{theorem}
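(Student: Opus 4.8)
The plan is to combine a rescaling/compactness analysis near the $\eps \to 0$ limit with a precise understanding of the limiting harmonic map problem, and then to upgrade a partial symmetry of minimizers to full $k$-fold $O(2)$-symmetry via the energy-comparison in Proposition \ref{wrmin}. First I would recall that, by classical results on the small-parameter Landau-de Gennes functional (the $2D$-$3D$ Ginzburg-Landau analogy mentioned in the introduction), any family of minimizers $Q_\eps$ of $\mcF_\eps$ has uniformly bounded energy and, after passing to a subsequence, converges strongly in $H^1_{\loc}(D \setminus \{0\})$ (and weakly in $H^1(D)$) to an $\mcS_*$-valued minimizing harmonic map $Q_0$ on $D$ with boundary data $Q_b$. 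Writing $Q_0 = s_+(v_0 \otimes v_0 - \frac13 I_3)$ with $v_0 : D \to \Sphere^2$, the boundary condition \eqref{def:n} forces $v_0$ to be an $\RP^1$-valued (in fact $\Sphere^1$-valued, since $k$ is even) map of degree $\frac k2$ on $\partial D$; the key classification input is that the only minimizing harmonic maps into $\mcS_*$ with this boundary data are the two ``escaping'' maps $v_0^\pm$ obtained by lifting the planar degree-$\frac k2$ vortex into the upper/lower hemisphere of $\Sphere^2$ — these are $k$-fold $O(2)$-symmetric, have $w_3$ of one sign, and have Dirichlet energy exactly $4\pi s_+^2 |k|$. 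This is where the $\Z_2$-symmetry must break: the $\Z_2$-symmetric (equatorial) competitor is the degree-$\frac k2$ vortex map into $\Sphere^1 \subset \Sphere^2$, whose relative energy diverges logarithmically (this is the $\ln R$ growth of $Q_R^{str}$), hence is never energy-minimizing for small $\eps$.

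Next I would extract $k$-fold $O(2)$-symmetry of the minimizers for small $\eps$. The natural route: the limit $Q_0$ is (one of the two) $O(2)$-symmetric harmonic maps; by an energy-quantization/no-loss-of-energy argument the convergence $Q_\eps \to Q_0$ is strong in $H^1(D)$ and in fact $C^1_{\loc}$ on $D \setminus \{0\}$ (elliptic estimates away from the concentration point, which by the degree is the origin). One then shows that for $\eps$ small the $O(2)$-orbit-average (or a direct fixed-point/Łojasiewicz-type argument near the limit) of $Q_\eps$ is close to $Q_\eps$ and is a critical point of the same energy with the same boundary data; uniqueness of minimizers close to the nondegenerate limit $Q_0$ then forces $Q_\eps = $ its own symmetrization, i.e. $Q_\eps$ is $k$-fold $O(2)$-symmetric. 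Concretely, I expect the paper's Sections \ref{ssec:LinHM}--\ref{ssec:unique} to establish: (i) a nondegeneracy/linearization estimate for the limiting harmonic map $Q_0^\pm$ (the second variation is coercive modulo the $O(2)$-action, using that $v_0^\pm$ is a minimizing harmonic map into a hemisphere), and (ii) an implicit-function/quantitative-uniqueness statement saying that near $Q_0^\pm$ there is, for small $\eps$, a unique critical point of $\mcF_\eps$ with the given boundary data, and it is $O(2)$-symmetric. Combining (i)-(ii) with the compactness dichotomy (every minimizing sequence subconverges to $Q_0^+$ or $Q_0^-$) yields that for $\eps < \eps_0$ there are exactly the two minimizers $Q_\eps^\pm$, each $O(2)$-symmetric.

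Then the structural conclusions follow: by Proposition \ref{wrmin} any $SO(2)$-symmetric minimizer under the $O(2)$-symmetric boundary data \eqref{BC1} is automatically $O(2)$-symmetric and of the form $w_0 E_0 + w_1 E_1 + w_3 E_3$; applying $J\cdot J$ sends $w_3 \mapsto -w_3$ and preserves $\mcF_\eps$ and the ($\Z_2 \times O(2)$-symmetric) boundary data, so $Q_\eps^- := J Q_\eps^+ J$ is also a minimizer. That $Q_\eps^+ \neq Q_\eps^-$, i.e. $w_3 \not\equiv 0$, is exactly the statement that the $\Z_2$-symmetric (equatorial) configuration is not a minimizer, which follows from the energy comparison $\mcF_\eps[Q_\eps^\pm] \to 4\pi s_+^2|k| < $ (energy of any $\Z_2$-symmetric competitor, which blows up like $|\ln\eps|$; cf. Lemma \ref{Lem:StrongLogDiv}). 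Finally, $w_3$ satisfies \eqref{Eq:w3} with $w_2 = w_4 = 0$, a linear-in-$w_3$ equation with no zeroth-order constant term, so the strong maximum principle gives $w_3 > 0$ (after choosing the sign of the $+$ branch), and there are exactly two minimizers because the limit set of minimizing sequences is exactly $\{Q_0^+, Q_0^-\}$ and each has a unique nearby critical point. The main obstacle, I expect, is step (i)-(ii): proving the quantitative nondegeneracy of the limiting escaping harmonic map $Q_0^\pm$ modulo its symmetry group and converting it into uniqueness of the $\eps$-problem's minimizer in a full $H^1$-neighborhood (not just among symmetric maps) — this is the technical heart, requiring careful control of the linearized Landau-de Gennes operator around $Q_\eps^\pm$ uniformly as $\eps \to 0$, including the behavior near the origin where the map has a defect core of size $O(\eps)$.
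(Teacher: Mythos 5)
Your high-level strategy --- subsequential convergence of minimizers to the two escaping harmonic maps $Q_*^\pm$, a local uniqueness statement for critical points of $\mcF_\eps$ near each limit, and identification of the global minimizer with the minimizer over the symmetric class --- is indeed the paper's strategy, but the proposal has genuine gaps precisely at the points you defer. First, your compactness picture is the odd-$k$ one: for even $k$ there is no concentration point at the origin and no defect core of size $O(\eps)$; the limits $Q_*^\pm$ are smooth on $\bar D$, the minimal energy stays bounded, and minimizers converge in $C^{1,\alpha}(\bar D)$ and, as the paper shows in Step 1 of Section \ref{SSec:ProofUniq}, in $H^2(D)$ --- and $H^2$-closeness (not weak $H^1$ or $H^1_{\loc}(D\setminus\{0\})$) is what is needed even to write the tubular-neighborhood decomposition $Q = Q_\sharp + \eps^2 P$ of Lemma \ref{lem:TubNbhd} on which the uniqueness argument rests. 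Second, the symmetrization step as you state it fails: the $O(2)$-orbit average of a critical point of a nonlinear functional is in general not a critical point, so it cannot be fed into a local uniqueness statement. The paper instead minimizes $\mcF_\eps$ over the symmetric class $\mcA^{rs}$, notes that such a constrained minimizer is a genuine critical point of the full functional, checks that it satisfies the same closeness estimates, and concludes by local uniqueness that it coincides with the global minimizer.

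Third, and most importantly, the ``quantitative uniqueness near $Q_*^\pm$'' is not a routine implicit-function statement, and you supply no mechanism for it. The paper decomposes $Q = Q_\sharp + \eps^2 P$ with $Q_\sharp$ parametrized by $\psi\perp n_*$ and $P(x)\in (T_{Q_*}\mcS_*)^\perp$, derives the coupled Euler--Lagrange system for $(\psi,P)$, inverts the linearized harmonic-map operator $L_\parallel$ (which by Lemma \ref{Lem:StrStab} is strictly coercive under the pointwise constraint $\zeta\cdot n_*=0$ --- no quotient by the $O(2)$-action is needed, since $n_*$ is equivariant) and the transversal operator $L_{\eps,\perp}$ in the $\eps$-weighted norm \eqref{Eq:H2epsNorm}, and closes a contraction argument (Proposition \ref{Prop:ResUniq}). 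The uniqueness neighborhood is therefore $\eps$-dependent, and to place every minimizer inside it one needs the a priori bound $\|P_\eps\|_{L^2(D)}\leq C$ uniformly in $\eps$, i.e.\ that the transversal part of a minimizer is $O(\eps^2)$ in $L^2$; this comes from the $C^0$-bound on $\eps^{-2}\big(Q_\eps^2 - \tfrac13 s_+ Q_\eps - \tfrac29 s_+^2 I_3\big)$ proved in \cite{NZ13-CVPDE}, not from the convergence statements you invoke (the $O(\eps^2)$ rate is in fact false in $H^2$). Without this balance between neighborhood size and rate of convergence the argument does not close. Your concluding points are fine --- deducing $Q_\eps^+\neq JQ_\eps^+J$ from the uniform energy bound together with the logarithmic lower bound of Lemma \ref{Lem:StrongLogDiv} for $\Z_2\times O(2)$-symmetric maps is a legitimate alternative to the paper's argument that the two basins $\mcC_\eps^\pm$ are disjoint, and the form $w_0E_0+w_1E_1+w_3E_3$ follows from Proposition \ref{Prop:okrChar} once $O(2)$-symmetry is known --- but these only kick in after the uniqueness/symmetry core, which is the part left unproved.
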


\subsection{Towards the proof of Theorem \ref{thm:MinSymmetryRes}}\label{SSec:Heu}

Using standard arguments it is straightforward to show that as $\eps \to 0$ the minimizers of $\mcF_\eps$ converge, along subsequences, in $H_{Q_b}^1(D,\mcS_0)$ to the minimizers of the harmonic map problem
\begin{equation}\label{HMPQ}
\mcF_*[Q] = \int_D \frac{1}{2}|\nabla Q|^2\,dx, \quad Q \in H_{Q_b}^1(D,\mcS_*), 
\end{equation}
where
\[
H_{Q_b}^1(D,\mcS_*) = \Big\{Q \in H^1(D,\mcS_*): Q = s_+(n \otimes n - \frac{1}{3} I_3) \text{ on } \partial D\Big\}
\]
and $\mcS_*$ defined in \eqref{def:limmanifold} is the set of global minimizers of $f_{\rm bulk}(Q)$, see e.g. \cite{BBH, Ma-Za}.

Due to the explicit form of $Q_b$ and the fact that $k$ is even, the minimizers of $\mcF_*$ in $H_{Q_b}^1(D,\mcS_*) $ can be written in the form $s_+(n_* \otimes n_* - \frac{1}{3}I_3)$ (see \cite{BallZar}), where $n_*$ minimizes the problem 
\begin{equation}
\mcF_{OF}[v] = \int_D \frac{1}{2}|\nabla v|^2\,dx, \qquad v \in H^1_{\nbdry}(D, \Sphere^2) = \{v \in H^1(D,\Sphere^2): v = \nbdry \text{ on } \partial D\}.
	\label{Eq:OFMinP}
\end{equation}
It is well known, see e.g.  \cite[Lemma A.2]{BrezisCoron83}, that minimizers of \eqref{Eq:OFMinP} are conformal and have images in either the upper or lower hemisphere. The compositions of these minimizers with the stereographic projections of the upper and lower hemispheres of $\Sphere^2$ onto the unit disk are the complex maps $z \mapsto z^{k/2}$ or $z \mapsto \bar z^{k/2}$, respectively. Therefore $\mcF_{OF}$ has exactly two minimizers in $H^1_n(D,\Ss^2)$ which are given by
\begin{equation}
n_*^\pm(r\cos\varphi, r\sin\varphi) =\left(\frac{2r^{\frac{k}{2}}\cos (\frac{k}{2}\varphi)}{1+r^k}, \frac{2r^{\frac{k}{2}}\sin(\frac{k}{2}\varphi)}{1+r^k}, \pm\frac{1-r^k}{1+r^k}\right).\label{Eq:n*Def}
\end{equation}
The corresponding minimizers of $\mcF_*$ are 
\begin{equation}
Q_*^\pm = s_+ (n_*^\pm \otimes n_*^\pm - \frac{1}{3}I_3).
	\label{Eq:Q*Def}
\end{equation}
We note that $Q_*^\pm$ are smooth and $O(2)$-symmetric but not $\Z_2$-symmetric and we can explicitly write $Q_*^\pm$ in terms of the basis tensors $\{E_i\}$ (see \eqref{basisS}) as
$$
Q_*^\pm = w_0^*(r) E_0 + w_1^*(r) E_1 \pm w_3^*(r)E_3,
$$
where
$$
w_0^*(r)= s_+\frac{2(1-r^k)^2-4r^k}{\sqrt{6}(1+r^k)^2}, \ w_1^*(r)= \frac{4s_+r^k}{\sqrt{2}(1+r^k)^2}, \ w_3^* (r) =  \frac{4s_+r^\frac{k}{2} (1-r^k)}{\sqrt{2}(1+r^k)^2}.
$$
It is possible to show that from any sequence of minimizers $Q_{\eps_k}$ of $\mcF_{\eps_k}$ in $H_{Q_b}^1(D,\mcS_0)$ one can extract a subsequence which converges in $C^{1,\alpha}(\bar D)$ and $C^j_{loc}(D)$ for any $j \geq 2$ to either $Q_*^+$ or $Q_*^-$ (the reasoning requires straightforward modifications of the arguments in \cite{BBH, NZ13-CVPDE}). Using the energy representation \eqref{Eq:LdGW} one can observe that if $Q_{\eps} = \sum_{i = 0}^4 w_{i,\eps} E_i$ is a minimizer of $\mcF_{\eps}$ in $H_{Q_b}^1(D,\mcS_0)$, then the $\Z_2$-conjugate $\tilde Q_\eps = JQ_\eps J = \sum_{i = 0}^2 w_{i,\eps} E_i - \sum_{i = 3}^4 w_{i,\eps} E_i$ of $Q_\eps$ is also a minimizer of $\mcF_{\eps}$ in $H_{Q_b}^1(D,\mcS_0)$. Also, if $Q_{\eps_k'} \rightarrow Q_*^+$, then $\tilde Q_{\eps_k'} \rightarrow Q_*^- = JQ_*^+ J$ and vice versa. Thus both $Q_*^+$ and $Q_*^-$ can appear as limits of the sequences of minimizers  of $\mcF_\eps$.

Now, restrict $\mcF_\eps$ to the set of \okr tensors
\be\label{def:Ars}
\mcA^{rs}= \Big\{Q \in H_{Q_b}^1(D,\mcS_0): Q \text{ is $O(2)$-symmetric}\Big\}.
\ee
Arguing as before but restricting $\mcF_\eps$ to $\mcA^{rs}$ it is straightforward to show that any sequence of minimizers $Q_{\eps_k}^{rs}$ of $\mcF_{\eps_k}$ in $\mcA^{rs}$ has a subsequence which converges in $C^{1,\alpha}(\bar D)$ and $C^j_{loc}(D)$ for any $j \geq 2$ to a minimizer of $\mcF_*$ in the set of $O(2)$-symmetric tensors in $H_{Q_b}^1(D,\mcS_*)$, which clearly must be either $Q_*^+$ or $Q_*^-$ as these are $O(2)$-symmetric.

Based on the above we can construct two sequences of critical points of $\mcF_\eps$ converging to $Q_*^+$ (or $Q_*^-$): one consisting of minimizers of $\mcF_\eps$ in $H_{Q_b}^1(D,\mcS_0)$, not as yet guaranteeing $k$-fold $O(2)$-symmetry, and another consisting of minimizers of $\mcF_\eps\big|_{\mcA^{rs}}$, which are $k$-fold $O(2)$-symmetric. Therefore, to prove Theorem \ref{thm:MinSymmetryRes} we need to show that if $\eps$ is small enough these sequences coincide, in particular, the minimizers of $\mcF_\eps$ are $O(2)$-symmetric. One possible approach is to employ the contraction mapping theorem or the implicit function theorem to show that 
\[
\parbox{.8\textwidth}{there are ``neighborhoods'' $\mcN^\pm$ of $Q_*^\pm$ such that when $\eps$ is small enough $\mcF_\eps$ admits at most one critical point in each of $\mcN^\pm$.}
\tag{$\dagger$}
\label{dagger}
\]

In this approach typically the neighborhoods $\mcN^\pm$ are set up in relatively stronger norms than the energy-associated norm. In addition, the norm and thus the neighborhood are dependent on $\eps$. A delicate point is the competition between the size of the neighborhood where one can prove uniqueness and the rate of convergence of minimizing sequences to the limit $Q_*^\pm$ (so that one can squeeze all minimizers into the designed neighborhood).

Below we present a roadmap to the proof of Theorem~\ref{thm:MinSymmetryRes}. Since $Q_*^\pm$ are equivalent up to a $\Z_2$-conjugation, it suffices to construct one such neighborhood, say $\mcN^+$ of $Q_*^+$. For simplicity, we will in the sequel drop the superindex $+$, so that $Q_* = Q_*^+$, $n_*=n_*^+$, etc.

In  Subsection~\ref{ssec:BasisEl}, we will provide a parameterization of suitable neighbourhoods $\mcN^\pm$ where we have a decomposition 
\[
Q = Q_\sharp + \eps^2 P = s_+\Big(\frac{n_* + \psi}{|n_* + \psi|} \otimes \frac{n_* + \psi}{|n_* + \psi|} - \frac{1}{3}I_3\Big) + \eps^2 P
\] with $\eps^2 P$ being  a ``transversal component'' of $Q$ and $Q_\sharp$ being a (non-orthogonal) ``projection'' onto the limit manifold $\mcS_*$. 

In Subsection~\ref{SSec:ELpsiP} we employ the above parameterization to obtain a new representation of the Euler-Lagrange equations \eqref{Eq:1.6res} in terms of the variables $\psi$ and $P$. In particular, we will derive a coupled  system of equations for $\psi$ and $P$ with the following properties:

\begin{enumerate}
\item One equation is of the form
\begin{equation}
L_{\parallel}\psi= \text{Lagrange multiplier terms} + F[\eps,\psi,P],
	\label{Eq:Lparal}
\end{equation} where the operator $L_{\parallel} = -\Delta - |\nabla n_*|^2$ is the linearized harmonic map operator at the minimizer $n_*$ of the problem \eqref{Eq:OFMinP}. See \eqref{Eq:psiEq} for the exact equation.

\item The other equation is of the form
\be \label{eq:Lepsperp}
L_{\eps,\perp} P = \text{Lagrange multiplier terms} +  s_+ \Delta(n_* \otimes n_*) + G[\eps,\psi,P],
\ee  with the linear operator $L_{\eps,\perp} P = -\eps^2\Delta P + b^2\,s_+\,P + 2 (c^2\,s_+ - b^2)\,(Pn_* \cdot n_*) Q_*$. See \eqref{Eq:PEq} for the exact form.
\end{enumerate}
We will see that, although the nonlinear operators $F$ and $G$ are second ordered in the fields $\psi$ and $P$, they are `super-linear' and are `small' when $\psi$ and $P$ are suitably `small'.
Subsection~\ref{ssec:LinHM} is devoted to study the operator $L_{\parallel}$ and in Subsection~\ref{ssec:LPInverse} we concentrate on the operator $L_{\eps,\perp}$.

In Subsection~\ref{ssec:psiP}, using previously derived properties of $L_\parallel$, we revisit \eqref{Eq:Lparal} and study the dependence of its  solution $\psi_\eps$ (with zero Dirichlet boundary condition) as a map of $P$. In order to balance the rate of convergence of the sequences of minimizers and the size of the neighborhoods $\mcN^\pm$ it will be convenient to measure the size of $P$ with respect to an $\eps$-dependent $H^2$ norm, specifically defined as:
\begin{equation}
\|P\|_{\eps} := \|P\|_{L^2(D)} + \eps\|\nabla P\|_{L^2(D)} + \eps^2\|\nabla^2 P\|_{L^2(D)}.
	\label{Eq:H2epsNorm}
\end{equation}

We first show that, for $P\in H^1_0\cap H^2(D, \mcS_0)$ with $\|P\|_{L^2(D)} = O(1)$ and $\|\nabla^2 P\|_{L^2(D)} = o(\eps^{-2})$, one can solve \eqref{Eq:Lparal} for $\psi_\eps = \psi_\eps(P) \in H^1_0\cap H^2(D,\RR^3)$. Furthermore, when $P$ is measured with respect to the norm $\|\cdot\|_\eps$ above, $\psi_\eps$ is Lipschitz with respect to $P$ with Lipschitz constant $O(\eps)$ (see Proposition \ref{Prop:psiofP}). Using the Lipschitz estimate above, we show that the map $P \mapsto L_{\eps,\perp}^{-1} G[\eps,\psi_\eps(P),P]$ is contractive. This proves the uniqueness statement formulated informally above in relation \eqref{dagger}. See Proposition \ref{Prop:ResUniq} in Subsection \ref{ssec:unique}.
In Subsection~\ref{SSec:ProofUniq}, using convergence results from \cite{NZ13-CVPDE} and the results presented above we prove Theorem~\ref{thm:MinSymmetryRes}. Finally, the proof of Theorem \ref{thm:MinSymmetry} is done in Subsection \ref{SSec:ProofUniqThm1.5}.

\subsection{A parametrization in small $H^2$-neighborhoods of $Q_*$}\label{ssec:BasisEl}

In this section we show that every $Q \in H^1_{Q_b}(D,\mcS_0) \cap H^2(D,\mcS_0)$ sufficiently close to a minimizer  $Q_*$ of $\mcF_*$ can be decomposed in a special way (see \eqref{eq:Qdecomp}) that takes into account the geometry of the limit manifold $\mcS_*$ and the way it embeds into the space $\mcS_0$ of $Q$-tensors.

Since $\mcS_*$ is a smooth compact submanifold of $\mcS_0$, we can find a neighborhood $N(\mcS_*)$ of $\mcS_*$ in $\mcS_0$, such that for every $B \in N(\mcS_*)$, there exists a unique $B_{ort} \in \mcS_*$ such that $|B - B_{ort}| = \dist(B, \mcS_*)$ where $|\cdot|$ stands for the norm associated to the Frobenius scalar product. Furthermore the projection $B \mapsto B_{ort}$ is a smooth map from $N(\mcS_*)$ onto $\mcS_*$. See Figure \ref{Fig1}.

Although the above orthogonal projection suffices for many purposes, it is somewhat more convenient in our current setting to work with a different projection which is more adapted to $Q_*$.  Let $n_* = n_*^+$ be as in \eqref{Eq:n*Def}  and $Q_* = Q_*^+ = s_+(n_* \otimes n_* - \frac{1}{3}I_3)$. We denote by $T_{Q_*}\mcS_* = T_{Q_*(x)}\mcS_*$ the tangent space to the {\it limit manifold} $\mcS_*$ at $Q_*(x)$ and by $(T_{Q_*}\mcS_*)^\perp$ its orthogonal complement in $\mcS_0 \approx \RR^5$, which is normal to $\mcS_*$ at $Q_*(x)$. It is known that $(T_{Q_*}\mcS_*)^\perp$ consists of all matrices in $\mcS_0$ commuting with $Q_*$; see \cite[Eq. (3.2)]{NZ13-CVPDE}. In particular, all matrices in $(T_{Q_*}\mcS_*)^\perp$ admits $n_*$ as an eigenvector. \footnote{Recall that the eigenspace of $Q_*$ corresponding to the eigenvalue $\lambda_*=\frac23s_+$ is of dimension one and generated by $n_*$; therefore, if $AQ_*=Q_*A$, then $An_*$ belongs to this eigenspace, i.e., $An_*$ is parallel to $n_*$.}

We want to show that every $Q$ in a ``sufficiently small neighborhood" of $Q_*$ decomposes as
\be\label{eq:Qdecomp}
Q(x):=\underbrace{s_+\left(v(x)\otimes v(x)-\frac{1}{3}I_3\right)}_{\text{belongs to $\mcS_*$}}+ \text{ part transversal to $\mcS_*$}
\ee 
so that $Q(x)-s_+\left(v(x)\otimes v(x)-\frac{1}{3} I_3\right) \in (T_{Q_*}\mcS_*)^\perp$, which will be useful later. See Figure \ref{Fig1}. We specify the result in the following lemma.

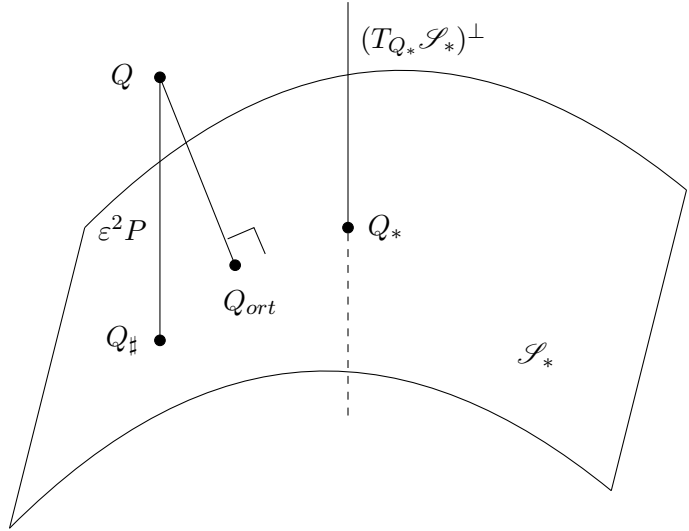
\begin{figure}[h]
\begin{center}
\begin{tikzpicture}
\draw (0,0) .. controls (2.7,2.7) and (5.3,2.7) .. (8,0.5);
\draw (1,4) .. controls (3.7,6.7) and (6.3,6.7) .. (9,4.5);
\draw (0,0) -- (1,4);
\draw (8,0.5) -- (9,4.5);

\draw (4.5,4) -- (4.5,7);
\draw[dashed] (4.5,1.5)--(4.5,4);
\filldraw[black] (4.5,4) circle [radius = 2pt];
\draw (2,2.5)--(2,6);
\filldraw[black] (2,2.5) circle [radius = 2pt];
\filldraw[black] (2,6) circle [radius = 2pt];
\draw(2,6)--(3,3.5);
\filldraw[black] (3,3.5) circle [radius = 2pt];
\draw (2.9,3.85)--(3.25,4)--(3.4,3.65);

\draw (5.5,6.5) node {$(T_{Q_*}\mcS_*)^\perp$}
(7,2.3) node {$\mcS_*$}
(5,4) node {$Q_*$}
(1.5,4) node {$\eps^2 P$}
(1.5,2.5) node {$Q_\sharp$}
(1.5,6) node {$Q$}
(3.2,3) node {$Q_{ort}$};
\end{tikzpicture}
\end{center}
\caption{The decomposition in Lemma \ref{lem:TubNbhd} vs orthogonal projection.}
\label{Fig1}
\end{figure}

\begin{lemma}\label{lem:TubNbhd}
Let $n_* = n_*^+$ and $Q_* = Q_*^+$ be as in \eqref{Eq:n*Def} and  \eqref{Eq:Q*Def}. There exist $\gamma > 0$ and some large $C_0>0$ such that for every $\eps > 0$ and every $Q \in H^1_{Q_b}(D,\mcS_0) \cap H^2(D,\mcS_0)$ with $\|Q - Q_*\|_{H^2(D)} \leq \frac{1}{C_0}$ we can uniquely write 
\begin{equation}
Q = Q_\sharp + \eps^2 P
	\label{Eq:QQsPDecomp}
\end{equation}
where $Q_\sharp$ and $P$ satisfy
\begin{itemize}
\item $Q_\sharp \in H^1_{Q_b}(D,\mcS_*) \cap H^2(D, \mcS_*)$,
\item $P\in H^1_0(D, \mcS_0) \cap H^2(D,\mcS_0)$ with $P(x)\in  (T_{Q_*}\mcS_*)^\perp$ for $x\in D$,
\item $\|Q_\sharp - Q_*\|_{H^2(D)} \leq \gamma\|Q - Q_*\|_{H^2(D)}$,
\item and $\eps^2\|P\|_{H^2(D)} \leq \gamma\,\|Q - Q_*\|_{H^2(D)}$.
\end{itemize}
Furthermore, there exists a unique $\psi \in H^1_0(D,\RR^3) \cap H^2(D,\RR^3)$ with $\psi \cdot n_* = 0$ a.e. in $D$ such that $\|\psi\|_{H^2(D)} \leq \gamma\|Q - Q_*\|_{H^2(D)}$ and
\begin{equation}
Q_\sharp = s_+\Big(\frac{n_* + \psi}{|n_* + \psi|} \otimes \frac{n_* + \psi}{|n_* + \psi|} - \frac{1}{3}I_3\Big).
	\label{QsharpX}
\end{equation}
\end{lemma}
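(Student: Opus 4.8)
The plan is to construct the decomposition \eqref{Eq:QQsPDecomp}--\eqref{QsharpX} by a tubular neighbourhood/implicit function argument, carried out fibrewise over $D$ and then upgraded to Sobolev regularity. First I would set up the pointwise geometry. Since $\mcS_*$ is a smooth compact embedded submanifold of $\mcS_0 \approx \RR^5$, there is a tubular neighbourhood $N(\mcS_*)$ and, for each $B\in N(\mcS_*)$, a well-defined orthogonal projection $B_{ort}\in\mcS_*$. However the decomposition we actually want is the \emph{oblique} one adapted to $Q_*(x)$: given $Q(x)$ close to $Q_*(x)$, we seek $Q_\sharp(x)\in\mcS_*$ with $Q(x)-Q_\sharp(x)\in (T_{Q_*}\mcS_*)^\perp$. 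The key point is that this is solvable and smooth: consider the map $\Phi_x : \mcS_* \times (T_{Q_*(x)}\mcS_*)^\perp \to \mcS_0$, $\Phi_x(S,N) = S + N$. At $(Q_*(x),0)$ its differential is $T_{Q_*(x)}\mcS_* \oplus (T_{Q_*(x)}\mcS_*)^\perp \to \mcS_0$, which is an isomorphism since these two spaces are complementary. By the inverse function theorem, $\Phi_x$ is a diffeomorphism from a neighbourhood of $(Q_*(x),0)$ onto a neighbourhood of $Q_*(x)$, and since $x\mapsto Q_*(x)$ is smooth with image in the compact set $\{Q_*(x):x\in\bar D\}$, the size of these neighbourhoods and the Lipschitz constants of the inverse can be taken uniform in $x$. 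This yields, for $|Q(x)-Q_*(x)|$ small uniformly in $x$, a unique $Q_\sharp(x)\in\mcS_*$ with $Q(x)-Q_\sharp(x)\perp T_{Q_*(x)}\mcS_*$, together with the pointwise bound $|Q_\sharp(x)-Q_*(x)| \le C\,|Q(x)-Q_*(x)|$, and $Q_\sharp(x)$ depends smoothly on $Q(x)$ and on $x$.

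Next I would promote this to the Sobolev statement. By the Sobolev embedding $H^2(D)\hookrightarrow C^0(\bar D)$ in two dimensions, the hypothesis $\|Q-Q_*\|_{H^2(D)}\le \tfrac{1}{C_0}$ with $C_0$ large forces $\|Q-Q_*\|_{L^\infty(D)}$ small, so the fibrewise construction applies at every $x\in D$, giving a measurable map $x\mapsto Q_\sharp(x)$. Because $Q_\sharp = \Pi(x,Q(x))$ for a fixed smooth map $\Pi$ (the oblique projection), and because on $\partial D$ we have $Q=Q_b\in\mcS_*$, hence $\Pi(x,Q_b(x)) = Q_b(x)$, it follows that $Q_\sharp = Q_b$ on $\partial D$. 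Standard composition estimates for Sobolev functions with smooth maps (in $2$D, using $H^1\cap L^\infty$ and $H^2\cap L^\infty$ algebra/chain-rule bounds) then give $Q_\sharp\in H^1_{Q_b}(D,\mcS_*)\cap H^2(D,\mcS_*)$ together with $\|Q_\sharp - Q_*\|_{H^2(D)}\le \gamma\|Q-Q_*\|_{H^2(D)}$; subtracting, $P := \eps^{-2}(Q - Q_\sharp)$ satisfies $P(x)\in (T_{Q_*}\mcS_*)^\perp$ by construction, $P\in H^1_0(D,\mcS_0)\cap H^2(D,\mcS_0)$ since $Q-Q_\sharp$ vanishes on $\partial D$, and $\eps^2\|P\|_{H^2(D)}=\|Q-Q_\sharp\|_{H^2(D)}\le (1+\gamma)\|Q-Q_*\|_{H^2(D)}$, which gives the last two bullet points after adjusting $\gamma$. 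Uniqueness of $(Q_\sharp,P)$ is inherited from the pointwise uniqueness in the inverse function theorem, once we restrict to decompositions with $\|Q_\sharp-Q_*\|_{L^\infty}$ small.

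Finally I would produce $\psi$. Recall $(T_{Q_*(x)}\mcS_*)^\perp$ is exactly the set of matrices in $\mcS_0$ commuting with $Q_*(x)$, hence admitting $n_*(x)$ as an eigenvector, while $T_{Q_*(x)}\mcS_*$ is parametrised near $Q_*(x)$ by unit vectors $v$ near $n_*(x)$ via $v\mapsto s_+(v\otimes v - \tfrac13 I_3)$. Writing a general unit vector near $n_*(x)$ as $\tfrac{n_*+\psi}{|n_*+\psi|}$ with $\psi\perp n_*$, the map $\psi\mapsto s_+\big(\tfrac{n_*+\psi}{|n_*+\psi|}\otimes\tfrac{n_*+\psi}{|n_*+\psi|}-\tfrac13 I_3\big)$ is a smooth local diffeomorphism from a neighbourhood of $0$ in $n_*(x)^\perp\subset\RR^3$ onto a neighbourhood of $Q_*(x)$ in $\mcS_*$ (its differential at $0$ is $\psi\mapsto s_+(n_*\otimes\psi+\psi\otimes n_*)$, which is injective on $n_*^\perp$ with image $T_{Q_*}\mcS_*$). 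Inverting it fibrewise, with uniform control in $x$ as before, gives a unique $\psi(x)\in n_*(x)^\perp$ representing $Q_\sharp(x)$ as in \eqref{QsharpX}, with $|\psi(x)|\le C|Q_\sharp(x)-Q_*(x)|$; the same composition estimates give $\psi\in H^2$, the boundary condition $\psi=0$ on $\partial D$ (since $Q_\sharp=Q_b$ corresponds to $v=n$, i.e. $\psi=0$), and $\|\psi\|_{H^2(D)}\le\gamma\|Q-Q_*\|_{H^2(D)}$ after enlarging $\gamma$. I expect the main obstacle to be bookkeeping rather than conceptual: namely verifying that all the implicit-function-theorem constants (neighbourhood sizes, Lipschitz bounds of inverses, and the constants in the nonlinear Sobolev composition estimates in $2$D) can be chosen uniformly in $x\in\bar D$ and independently of $\eps$ — the $\eps$ enters only as the trivial rescaling $P=\eps^{-2}(Q-Q_\sharp)$ — and in checking the chain-rule/product estimates carefully enough to get genuine $H^2$ bounds (not merely $H^1$) from the $H^2\hookrightarrow L^\infty$ embedding.
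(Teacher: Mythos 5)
Your proposal is correct and, for the first half, is essentially the paper's own argument: the paper defines the oblique projection $\mcP_{B_*}$ in graph coordinates over $T_{B_*}\mcS_*$ (smooth jointly in $B$ and $B_*$, hence with uniform constants by compactness), uses $H^2(D)\hookrightarrow C^0(\bar D)$ to apply it fibrewise, and gets the $H^2$ bounds for $Q_\sharp-Q_*$ and $\eps^2P$ exactly as you do, by composition with a smooth map fixing $Q_*$; your inverse-function-theorem formulation via $\Phi_x(S,N)=S+N$ is the same construction in different clothing. Where you genuinely diverge is the construction of $\psi$: the paper first produces a global unit-vector lift $v$ of $Q_\sharp$ using the covering space $\Sphere^2\to\RR P^1\subset$ (in fact $\mcS_*\cong\RR P^2$) together with simple connectedness of $D$ and the lifting regularity result of Ball--Zarnescu to get $v\in W^{1,p}$, upgrades $v$ to $H^2$ through the identity $\nabla_k v_i=\frac1{s_+}\nabla_k(Q_\sharp)_{ij}v_j$, and only then sets $\psi=\frac{1}{v\cdot n_*}v-n_*$ (after using closeness to force $v\cdot n_*\ge\frac12$); you instead invert the parametrization $\psi\mapsto s_+\bigl(\frac{n_*+\psi}{|n_*+\psi|}\otimes\frac{n_*+\psi}{|n_*+\psi|}-\frac13 I_3\bigr)$ fibrewise near $\psi=0$, which avoids the topological lifting step and the $W^{1,p}$ input entirely, at the price of leaning on uniform IFT constants and Sobolev composition estimates — in effect you jump straight to the paper's closing observation that $\psi=G(n_*,Q_\sharp-Q_*)$ for a smooth $G$. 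Both routes are valid; the paper's buys a self-contained description of the lift $v$ (used conceptually elsewhere), yours is more local and elementary. One small refinement worth adopting from the paper: your uniqueness of $\psi$ is stated only among small $\psi$, whereas it is actually unconditional, because any $\psi\perp n_*$ satisfying \eqref{QsharpX} has $\frac{n_*+\psi}{|n_*+\psi|}\cdot n_*=\frac1{|n_*+\psi|}>0$, which pins down the sign of the eigenvector and hence forces $\psi=\frac{1}{v\cdot n_*}v-n_*$.
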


\begin{remark}
In the above lemma, we have deliberately written the ``transversal'' component of $Q$ as $\eps^2 P$ even though $\eps$ plays no role at the moment. In \cite{NZ13-CVPDE}, it is shown that, in a similar setting, if $Q_\eps$ is a minimizer for $\mcF_\eps$, then its ``transversal'' contribution is of size $\eps^2$ in some appropriate topology. In the setting of the present paper, we will show this holds in the $L^2(D,\mcS_0)$-topology; see \eqref{bddPeps} below. This rate of convergence however does not hold in the $H^2(D,\mcS_0)$-topology.\footnote{For such rate of convergence would imply in view of Proposition \ref{Prop:psiofP} below that $\|Q_\eps - Q_*\|_{H^2(D,\mcS_0)} = O(\eps^2)$, which would further imply that the limit of $\eps^{-2} (Q_\eps - Q_*)$ has zero trace on $\partial D$, which would contradict \cite[Theorem 2, Eq. (2.14)]{NZ13-CVPDE}. See also \cite{BBH} for a similar statement in the Ginzburg-Landau setting.} This is related to the comment we made earlier on the fact that the sets $\mcN^\pm$ in \eqref{dagger} are $\eps$-dependent. 
\end{remark}

\begin{remark}
It should be noted that the map $\psi$ appearing in the representation of $Q_\sharp$ belongs to a linear space (as $\psi$ is orthogonal to $n_*$) as opposed to $Q_\sharp$ that belongs to a nonlinear set (as its values being constrained in $\mcS_*$).
\end{remark}

\begin{proof} Since $\mcS_*$ is a smooth submanifold of $\mcS_0$, there exists for every point $B_* \in \mcS_*$ a neighborhood $U_{B_*}$ of $B_*$ in $\mcS_0$ such that $\mcS_* \cap U_{B_*}$ is a graph over the tangent plane $T_{B_*}\mcS_*$. We then select local Cartesian-type coordinates $\{x_1, \ldots, x_5\}$ of $\mcS_0 \approx \RR^5$ such that $B_*$ corresponds to the origin, $T_{B_*}\mcS_*$ coincides with $\{(x_1,x_2,0,0,0): x_1,x_2\in \RR\}$ and $\mcS_* \cap U_{B_*}$ is given by $\{(x_1,x_2,u_1(x_1,x_2),u_2(x_1,x_2),u_3(x_1,x_2)): (x_1,x_2)\in\widetilde{\mathcal{U}}\}$ for  some open set $\widetilde{\mathcal{U}}\subset\RR^2$ and some smooth function $u=(u_1,u_2,u_3):\widetilde{\mathcal{U}}\to\RR^3$ with $u(0) = 0$ and $\nabla u(0) = 0$. Define a projection $\mcP_{B_*}$ from $U_{B_*}$ to $\mcS_*$ by
\[
\mcP_{B_*}(x_1, x_2, x_3, x_4, x_5) = (x_1,x_2,u(x_1,x_2)).
\]
One can check that $\mcP_{B_*}(B)$ is well-defined (i.e. independent of local charts) and smooth as a function of two variables $B\in\mcS_0$ and $B_*\in\mcS_*$. Furthermore, $\mcP_{B_*}$ is the unique projection with the property $B - \mcP_{B_*}(B) \in (T_{B_*} \mcS_*)^\perp$.

As $D$ is two dimensional, maps in $H^2(D,\mcS_0)$ are continuous. Thus, there exists some large constant $C_0 > 0$ such that whenever  
\be\label{ass:closeQQstar}
\|Q - Q_*\|_{H^2(D)} \leq \frac{1}{C_0},
\ee there holds $Q(x) \in U_{Q_*(x)}$ for all $ x \in D$. The decomposition \eqref{Eq:QQsPDecomp} is achieved by
\[
Q_\sharp(x) = \mcP_{Q_*(x)}(Q(x)) \text{ and } P(x)= \eps^{-2}(Q(x) - Q_\sharp(x)) \text{ for } x \in D.
\]

We now proceed to check the desired properties of $Q_\sharp$ and $P$. First, we have
\[
Q_\sharp(x)-Q_*(x)=\mcP_{Q_*(x)}(Q(x))-\mcP_{Q_*(x)}(Q_*(x)).
\] 
Using the smoothness of $\mcP$ in both variables, we obtain the claimed control of $\|Q_\sharp-Q_*\|_{H^2(D)}$ in terms of $\|Q-Q_*\|_{H^2(D)}$. Furthermore we have $Q-Q_\sharp=Q-Q_*+Q_*-Q_\sharp$ which also provides the control of the $H^2$-norm of $\eps^2 P=Q-Q_\sharp$ in terms of the $H^2$-norm of $Q-Q_*$, as claimed.

We turn to the second part of the lemma. Note that $Q_\sharp \in H^2(D,\mcS_*)$ is continuous. As $D$ is simply connected and $\mcS_*$ can be topologically identified with a projective plane, a standard result in topology about 
covering spaces implies that there is a unique continuous function $v \in C^0(D, \Sphere^2)$ such that 
\[
Q_\sharp = s_+(v \otimes v - \frac{1}{3}I_3) \text{ in } D \text{ and } v = \nbdry \text{ on }\partial D.
\]
Furthermore, by \cite[Theorem 2]{BallZar}, we have $v \in W^{1,p}(D,\Sphere^2)$ for any $p \geq 2$. 

Note that $\nabla_k (Q_\sharp)_{ij} = s_+ (\nabla_k v_i\,v_j + \nabla_k v_j\,v_i)$, and so, as $|v| = 1$, 
\[
\nabla_k v_i = \frac{1}{s_+} \nabla_{k} (Q_\sharp)_{ij}\,v_j,
\]
from which one can easily deduce that $v \in H^2(D,\Sphere^2)$.

Next note that, as $Q_\sharp - Q_* = s_+(v \otimes v - n_* \otimes n_*)$, we have
\begin{equation} \label{eq:rel1}
|Q_\sharp - Q_*|^2 = 2s_+^2(1 - (v \cdot n_*)^2)
\quad \text{ and }\quad
(Q_\sharp - Q_*)n_* 
	= s_+[(v \cdot n_*) v - n_*] .
\end{equation}
Taking $C_0$ large enough in \eqref{ass:closeQQstar} and using equality \eqref{eq:rel1}, we obtain $(v\cdot n_*)^2\ge \frac{1}{4}$. Since both $v$ and $n_*$ are continuous and coincide at the boundary we deduce $v \cdot n_* \geq \frac12$. Therefore we can define
\[
\psi = \frac{1}{v \cdot n_*}\,v - n_*.
\]
Observe that the above is equivalent to $\big($ $\psi \cdot n_* =0$ and $v = \frac{n_* + \psi}{|n_* + \psi|}$ $\big)$, which gives the uniqueness of $\psi$. On the other hand, one has
$$
\psi={ (v \cdot n_*)v -n_* \over (v \cdot n_*)^2} + {n_* (1- (v \cdot n_*)^2) \over (v \cdot n_*)^2}.
$$
Recalling relations \eqref{eq:rel1} we can represent $\psi = G( n_*, Q_\sharp - Q_*)$, where $G$ is a smooth map provided that $|Q_\sharp - Q_*|$ is small. We hence obtain $\| \psi \|_{H^2(D)} \leq C \|Q_\sharp - Q_*\|_{H^2(D)} \leq \gamma\|Q - Q_*\|_{H^2(D)}$. This concludes the proof.
 \end{proof}

\subsection{The Euler-Lagrange equations}\label{SSec:ELpsiP}

In this subsection we rewrite the Euler-Lagrange equations \eqref{Eq:1.6res} for $\mcF_\eps$ in terms of the variables $\psi$ and $P$ introduced in Lemma \ref{lem:TubNbhd}. This new form of the  Euler-Lagrange equations will be used in the subsequent analysis. 

\begin{lemma}\label{lemma:EL}
Let  $Q \in  H^1_{Q_b} (D,\mcS_0) \cap H^2(D,\mcS_0)$ be a critical point of $\mcF_\eps$ for some $\eps > 0$, and $n_*$ and $Q_*$ be given by \eqref{Eq:n*Def} and \eqref{Eq:Q*Def}. Suppose that $\|Q - Q_*\|_{H^2(D)}$ is sufficiently small and let $Q_\sharp$, $P$ and $\psi$ be as in Lemma \ref{lem:TubNbhd}. Then $\psi$ and $P$ satisfy the following equations
\begin{eqnarray}
-\Delta \psi - |\nabla n_*|^2\,\psi
	&=&  \lambda_\eps \,n_* +  A[\psi] + \eps^2\,B_\eps[\psi,P] ,
		\label{Eq:psiEq}\\ 
-\eps^2 \Delta P + b^2\,s_+\,P + 2 (c^2\,s_+ - b^2)\,(Pn_* \cdot n_*) Q_* 
	&=&   F_\eps + s_+ \Delta( n_* \otimes n_* ) \nonumber\\
	&&\hspace{-0.5cm} + C_\eps[\psi,P]-\frac{1}{3}\tr(C_\eps[\psi,P])I_3 ,
		\label{Eq:PEq} \\
\psi  \cdot n_* =0, \quad  P &\in& (T_{Q_*}\mcS_*)^\perp \quad  \hbox{ in }  D,
		\label{Eq:psiPCons}
\end{eqnarray}
where $\lambda_\eps(x)$ is a Lagrange multiplier accounting for the constraint $\psi \cdot n_* = 0$, $F_\eps(x) \in T_{Q_*}\mcS_*$ is a Lagrange multiplier accounting for the constraint $P \in (T_{Q_*}\mcS_*)^\perp$, and maps $A, B_\eps, C_\eps$ are defined in equations \eqref{def:A}, \eqref{def:B} and \eqref{def:C} in Appendix \ref{App:Tech}.
\end{lemma}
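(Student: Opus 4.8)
The plan is to start from the Euler--Lagrange equation \eqref{Eq:1.6res} for $\mcF_\eps$ and substitute the decomposition $Q = Q_\sharp + \eps^2 P$ from Lemma~\ref{lem:TubNbhd}, then split the resulting equation into its tangential and normal parts relative to $T_{Q_*}\mcS_*$. Concretely, first I would recall that a critical point $Q$ of $\mcF_\eps$ in $H^1_{Q_b}(D,\mcS_0)$ satisfies \eqref{Eq:1.6res}; equivalently, writing $\Phi(Q) := a^2 Q + b^2[Q^2 - \tfrac13 \tr(Q^2) I_3] - c^2 \tr(Q^2) Q$, we have $\eps^2 \Delta Q + \Phi(Q) = 0$. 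I would then test this equation against variations respecting the two parametrizations: variations of $\psi$ tangent to $\Sphere^2$ at $v = \frac{n_*+\psi}{|n_*+\psi|}$ (giving the $\psi$-equation with Lagrange multiplier $\lambda_\eps n_*$ enforcing $\psi\cdot n_* = 0$), and variations of $P$ within $(T_{Q_*}\mcS_*)^\perp$ (giving the $P$-equation with the tangential Lagrange multiplier $F_\eps$ enforcing $P \in (T_{Q_*}\mcS_*)^\perp$). This is the standard constrained-critical-point bookkeeping, and the Lagrange multipliers $\lambda_\eps(x)$ and $F_\eps(x)$ arise precisely because $\{\psi : \psi\cdot n_* = 0\}$ and $(T_{Q_*}\mcS_*)^\perp$ are pointwise linear constraints.

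Next I would extract the linearized operators. For the $\psi$-equation, one uses that $Q_\sharp = s_+(v\otimes v - \tfrac13 I_3)$ with $v = \frac{n_*+\psi}{|n_*+\psi|}$, so that $-\Delta Q_\sharp$ contains the leading term $-s_+\big[(\Delta v)\otimes v + v\otimes(\Delta v) + 2\nabla v \cdot \nabla v\big]$; projecting onto the tangential directions and using that $n_*$ is a (weakly) harmonic map into $\Sphere^2$, i.e. $-\Delta n_* = |\nabla n_*|^2 n_*$, isolates $L_\parallel \psi = -\Delta\psi - |\nabla n_*|^2 \psi$ as the linear part, with everything else collected into the Lagrange multiplier term and the remainders $A[\psi]$ (the genuinely nonlinear-in-$\psi$ terms coming from expanding $v$ and $\Phi(Q_\sharp)$ around $n_*$, which vanishes to second order at $\psi = 0$) and $\eps^2 B_\eps[\psi,P]$ (the terms involving $P$, carrying the explicit $\eps^2$ prefactor from $Q = Q_\sharp + \eps^2 P$). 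For the $P$-equation, substituting $Q = Q_\sharp + \eps^2 P$ into $\eps^2 \Delta Q + \Phi(Q) = 0$, the term $\eps^2 \Delta Q_\sharp$ produces $s_+ \eps^2 \Delta(n_*\otimes n_*)$ to leading order (since $Q_\sharp - s_+(n_*\otimes n_* - \tfrac13 I_3)$ is controlled by $\psi$), and linearizing $\Phi$ at $Q_*$ in the normal direction $P$ yields exactly $b^2 s_+ P + 2(c^2 s_+ - b^2)(P n_* \cdot n_*) Q_*$ after using $\Phi(Q_*) = 0$ and the spectral structure of $Q_*$; here one must divide through by $\eps^2$ so that the $\eps^2 \Delta(Q-Q_\sharp) = \eps^4\Delta P$ term becomes $\eps^2 \Delta P$, matching $L_{\eps,\perp}$. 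The residual $C_\eps[\psi,P]$ (minus its trace, to stay in $\mcS_0$) collects the higher-order terms in $\psi$ and $P$ together with the $\psi$-dependent correction from $\Delta Q_\sharp$, and $F_\eps \in T_{Q_*}\mcS_*$ is the projection multiplier.

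The routine-but-careful part is verifying that the pointwise orthogonal decomposition $\mcS_0 = T_{Q_*}\mcS_* \oplus (T_{Q_*}\mcS_*)^\perp$ commutes with the relevant operations and that the chain-rule computations for $v = \frac{n_*+\psi}{|n_*+\psi|}$ land the correct quadratic-and-higher remainders into $A$, $B_\eps$, $C_\eps$; these identifications are exactly the content of the definitions \eqref{def:A}, \eqref{def:B}, \eqref{def:C} in Appendix~\ref{App:Tech}, so in the body of the proof I would simply reference those formulas and check consistency. The main obstacle — more conceptual than computational — is organizing the expansion so that the split into ``linear operator $+$ Lagrange multiplier $+$ super-linear remainder'' is clean and the $\eps$-powers are tracked correctly: in particular making sure the $\eps^2$ in front of $B_\eps$ and the division by $\eps^2$ in the $P$-equation are consistent with $Q = Q_\sharp + \eps^2 P$, and that $A[\psi]$, $C_\eps[\psi,P]$ really are $o$ of their arguments near zero (so the later contraction arguments in Subsections~\ref{ssec:psiP}--\ref{ssec:unique} apply). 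The regularity bookkeeping ($\psi, P \in H^1_0 \cap H^2$, with $Q - Q_*$ small in $H^2$ so that $Q(x)$ stays in the tubular neighborhood and $v\cdot n_* \geq \tfrac12$, as already guaranteed by Lemma~\ref{lem:TubNbhd}) is then automatic, since in two dimensions $H^2 \hookrightarrow C^0$ makes all the nonlinear substitutions and products well-defined.
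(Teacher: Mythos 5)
Your plan is, at bottom, the same constrained Euler--Lagrange bookkeeping as the paper's: criticality of $Q$ is transferred to the variables $(\psi,P)$ of Lemma \ref{lem:TubNbhd}, with pointwise Lagrange multipliers $\lambda_\eps n_*$ and $F_\eps\in T_{Q_*}\mcS_*$ for the two linear constraints, and with the linear parts $L_\parallel$ and $L_{\eps,\perp}$ isolated from super-linear remainders. The organizational difference is that you substitute $Q=Q_\sharp+\eps^2P$ into the strong equation \eqref{Eq:1.6res} and project (equivalently, test against variations generated by the parametrization), whereas the paper expands the energy $\mcF_\eps[Q(\psi,P)]$ itself, elastic and bulk parts separately, and then reads off the Euler--Lagrange equations of the reparametrized functional; the operators $A$, $B_\eps$, $C_\eps$ in \eqref{def:A}--\eqref{def:C} are \emph{defined} as exactly what that expansion produces, so the energy-level route delivers the lemma and the estimates of Proposition \ref{prop:ABCest} in one pass, with no separate ``matching'' step.

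Two points in your sketch need more care if you want to land the appendix formulas rather than merely equations of the same general shape. First, the bulk term must be expanded about $Q_\sharp$, not about $Q_*$: the paper uses $f_{\rm bulk}(Q_\sharp)=0$ and $\nabla f_{\rm bulk}(Q_\sharp)=0$ (valid because $Q_\sharp$ takes values in $\mcS_*$), so that after dividing by $\eps^2$ no pure-$\psi$ bulk contributions survive. Your appeal to ``$\Phi(Q_*)=0$'' alone is not enough: expanding your $\Phi$ at $Q_*$ generates terms in $Q_\sharp-Q_*$ alone which are of size $|\psi|^2$ and are only seen to cancel by invoking the vanishing of $\Phi$ on all of $\mcS_*$, i.e.\ $\Phi(Q_\sharp)=0$; without that grouping you would be left with uncontrolled $\eps^{-2}|\psi|^2$-type terms in \eqref{Eq:PEq}. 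Second, the specific structure of $B_\eps$ and $C_\eps$ (only first-order derivative couplings between $\psi$ and $P$) comes from an integration by parts in the cross term $\int_D \nabla(n_*\otimes\psi+\psi\otimes n_*):\nabla P\,dx$, using $Pn_*\parallel n_*$ (a consequence of $P\in(T_{Q_*}\mcS_*)^\perp$), $\Delta n_*\parallel n_*$ and $\psi\cdot n_*=0$; this cancellation is exactly what makes the Lipschitz bounds of Proposition \ref{prop:ABCest}, and hence the later contraction arguments, work. Neither point is a fatal obstruction to your route, but they are the substantive computations the proof actually consists of, so ``reference the formulas and check consistency'' is precisely where the work lies rather than a routine afterthought.
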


In Lemma~\ref{lemma:EL} above, we do not provide exact form of $A, B_\eps, {C}_\eps$ nor indicate their explicit dependence on $x$ as we show later (see the proof of Lemma~\ref{lemma:EL}) that  these are lower order terms that do not play a role in our analysis. We will only use their properties summarized in the following proposition.

\begin{proposition}\label{prop:ABCest}
Let $\eps \in (0,1)$, $n_*$ and $Q_*$ be given by \eqref{Eq:n*Def} and \eqref{Eq:Q*Def}, and let $A,B_\eps$ and $C_\eps$ be the operators appearing in Lemma~\ref{lemma:EL}, defined in  \eqref{def:A},\eqref{def:B},\eqref{def:C} in Appendix \ref{App:Tech}. Then, for $\psi \in H^1_0(D,\RR^3) \cap H^2(D,\RR^3)$, $P\in H^1_0(D, \mcS_0) \cap H^2(D,\mcS_0)$ satisfying $\psi \cdot n_* = 0$ and $P \in (T_{Q_*}\mcS_*)^\perp$ in $D$, we have  the following:
\begin{align}
A[0]
	&= 0,
	\label{est:Aest0}\\
\|A[\psi] - A[\tilde\psi]\|_{L^2(D)}
	&\leq C(\|\psi\|_{H^2(D)}+\|\tilde\psi\|_{H^2(D)}) \nonumber\\
		&\qquad  \times (1+\|\psi\|_{H^2(D)}+\|\tilde\psi\|_{H^2(D)})\|\psi - \tilde\psi\|_{H^2(D)},
	\label{est:Aest}\\
\|B_\eps[0,P]\|_{L^2(D)}
	&\leq C\|P\|_{H^1(D)},
	\label{est:Best0}\\
\|B_\eps[\psi,P] - B_\eps[\tilde \psi, P]\|_{L^2(D)}
	&\leq C\Big[ \|\nabla^2 P\|_{L^2(D)} + (1 + \eps^2\|P\|_{H^2(D)}) \|P\|_{L^4(D)}^2 \Big]\|\psi-\tilde\psi\|_{H^2(D)},
	\label{est:Best1}
\end{align}
\vspace{-0.5cm}
\begin{multline}
\|B_\eps[\psi,P] - B_\eps[ \psi, \tilde P]\|_{L^2(D)}
	\leq C\|P-\tilde P\|_{H^1(D)}
		 + C\|\psi\|_{H^2(D)}\Big(\|P-\tilde P\|_{H^2(D)}\\
			+(\|P\|_{H^2(D)}+\|\tilde P\|_{H^2(D)}) (1 + \eps^2\|P\|_{H^2(D)} + \eps^2\|\tilde P \|_{H^2(D)})\|P-\tilde P\|_{L^2(D)}\Big),
\label{est:Best2}
\end{multline}
\vspace{-0.5cm}
\begin{align}
&\|C_\eps(\psi,P)-C_\eps(\tilde\psi,\tilde P)\|_{L^2(D)} 
	 \leq C(1 + \|\psi\|_{H^2(D)} + \|\tilde\psi\|_{H^2(D)})^2\|\psi - \tilde\psi\|_{H^2(D)}\nonumber\\
&\qquad\quad +C(\|P\|_{L^2(D)}+\|\tilde P\|_{L^2(D)}) (1 + \eps^2(\|P\|_{H^2(D)}+\|\tilde P\|_{H^2(D)})) \|\psi - \tilde\psi\|_{H^2(D)}\nonumber\\
&\qquad\quad+C(\|\psi\|_{H^2(D)} +\|\tilde\psi\|_{H^2(D)})  \|P - \tilde P\|_{L^2(D)}\nonumber\\
&\qquad\quad+ C\eps^2(\|P\|_{L^4(D)} + \|\tilde P\|_{L^4(D)}) (1 + \eps^2(\|P\|_{H^2(D)}+\|\tilde P\|_{H^2(D)}))	\|P - \tilde P\|_{H^1(D)} ,
\label{est:Cest}
\end{align} with $C$ denoting various constants independent of $\eps$ and the functions appearing in the inequalities.

\end{proposition}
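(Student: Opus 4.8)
\textit{Proof strategy.} This is a term-by-term estimate based on the explicit formulas for $A$, $B_\eps$ and $C_\eps$ in \eqref{def:A}, \eqref{def:B} and \eqref{def:C}. The plan is to first isolate the algebraic structure common to the three operators, and then to bound each monomial using two-dimensional Sobolev embeddings. The constraints $\psi\cdot n_*=0$ and $P\in(T_{Q_*}\mcS_*)^\perp$ play no role in the estimates themselves; they enter only through Lemma \ref{lem:TubNbhd} and Lemma \ref{lemma:EL} to make $A,B_\eps,C_\eps$ well defined.

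First I would record the structural features. Each of $A[\psi]$, $B_\eps[\psi,P]$ and $C_\eps[\psi,P]$ is a finite sum of terms $\Phi\,M$, where $\Phi=\Phi(x)$ is a smooth function of $n_*(x),\nabla n_*(x),\nabla^2 n_*(x)$, $\psi(x)$ and $\eps^2P(x)$ --- smooth because the only denominators occurring are powers of $|n_*+\psi|$, which is bounded below by $\tfrac12$ once $\|\psi\|_{H^2(D)}$ is small (Lemma \ref{lem:TubNbhd}) --- and $M$ is a multilinear monomial in the components of $\psi,\nabla\psi,\nabla^2\psi$ and of $\eps^2P,\eps^2\nabla P,\eps^2\nabla^2P$ with at most one factor carrying two derivatives. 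Two facts pin down the leading behaviour: (i) since $A$ is by construction the remainder after extracting the linearized harmonic map operator $L_\parallel$ in \eqref{Eq:psiEq}, it vanishes to second order in $\psi$ (this is \eqref{est:Aest0}); (ii) since $Q_*$ is an $\mcS_*$-valued harmonic map one has $\Delta Q_*\in(T_{Q_*}\mcS_*)^\perp$, so $\Delta Q_*=s_+\Delta(n_*\otimes n_*)$ contributes only the source term in \eqref{Eq:PEq} and nothing to \eqref{Eq:psiEq}; moreover $f_{\rm bulk}$ vanishes to second order on $\mcS_*$, so after dividing \eqref{Eq:1.6res} by $\eps^2$ all bulk contributions beyond the quadratic Hessian term (which sits in $L_{\eps,\perp}$, not in $C_\eps$) acquire an extra factor of $\eps^2$. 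In particular every monomial of $B_\eps$ either contains a factor from $\{\psi,\nabla\psi,\nabla^2\psi\}$ or is linear in $\{\eps^2P,\eps^2\nabla P\}$, which gives \eqref{est:Best0}. Here it is essential that, for even $k$, the map $n_*^\pm$ of \eqref{Eq:n*Def} extends smoothly to $\bar D$ (it is a rational map of $z=x_1+\mathrm{i}x_2$ with nonvanishing denominator), so that $n_*,\nabla n_*,\nabla^2 n_*$ and $|\nabla n_*|^2$ all lie in $L^\infty(D)$.

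Next I would invoke the two-dimensional embeddings $H^2(D)\hookrightarrow C^0(\bar D)$, $H^2(D)\hookrightarrow W^{1,4}(D)$ and $H^1(D)\hookrightarrow L^4(D)$, together with the product bounds $\|fg\|_{L^2(D)}\le\|f\|_{L^\infty(D)}\|g\|_{L^2(D)}$ and $\|fg\|_{L^2(D)}\le\|f\|_{L^4(D)}\|g\|_{L^4(D)}$. For the coefficients, Taylor expansion in the $(\psi,\eps^2P)$-slot plus these embeddings yields, whenever $\|\psi\|_{H^2(D)}+\|\tilde\psi\|_{H^2(D)}+\eps^2(\|P\|_{H^2(D)}+\|\tilde P\|_{H^2(D)})$ is bounded, uniform $L^\infty$-bounds on $\Phi$ and Lipschitz bounds $\|\Phi(\cdot,\psi,\eps^2P)-\Phi(\cdot,\tilde\psi,\eps^2\tilde P)\|_{L^\infty(D)}\le C(\|\psi-\tilde\psi\|_{L^\infty(D)}+\eps^2\|P-\tilde P\|_{L^\infty(D)})$ with $C$ independent of $\eps$. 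Then each of the displayed inequalities follows by a routine telescoping: in a difference such as $A[\psi]-A[\tilde\psi]$ one replaces the arguments one factor at a time, estimating each twice-differentiated factor in $L^2$, each once-differentiated factor in $L^4$, each undifferentiated factor in $L^\infty$, and each coefficient difference in $L^\infty$. The powers of $\eps^2$ attached to $P$-factors are simply carried along, which is precisely what produces the weights $\eps^2\|P\|_{H^2(D)}$ and $\eps^2\|P\|_{L^4(D)}$ in \eqref{est:Best1}, \eqref{est:Best2} and \eqref{est:Cest}, while the terms of $B_\eps$ containing $\nabla^2P$ always carry a compensating $\psi$-factor and hence contribute the $\|\nabla^2P\|_{L^2(D)}\|\psi-\tilde\psi\|_{H^2(D)}$ term in \eqref{est:Best1}. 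The estimate \eqref{est:Cest} is handled the same way once $C_\eps$ is split into its purely $\psi$-dependent part (from the geometry of $\mcS_*$ and $\Delta Q_\sharp$, giving the $(1+\|\psi\|_{H^2}+\|\tilde\psi\|_{H^2})^2$ factor), its mixed $\psi$–$P$ part, and its purely $P$-dependent part (from the bulk nonlinearity, hence with an $\eps^2$-weight).

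The individual inequalities are thus elementary; the only genuinely delicate aspect is the bookkeeping. One must check against \eqref{def:A}--\eqref{def:C} that every monomial which is quadratic or higher in $P$ retains at least the stated power of $\eps^2$ --- so that in the contraction scheme of Subsection \ref{ssec:unique} those terms stay small even when $\|P\|_\eps$ is only of unit size --- and that $\nabla^2P$ never appears in $B_\eps$ without an accompanying $\psi$-factor. As indicated above, both facts follow from the substitution $Q=Q_\sharp+\eps^2P$ into \eqref{Eq:1.6res}: quadratic-in-$P$ contributions arise from $(\eps^2P)^2$-type products in $\tr(Q^2)$, $Q^2$ and $\tr(Q^3)$ and hence carry at least $\eps^4$ before the division by $\eps^2$, while no second derivative of $P$ survives the tangential projection defining \eqref{Eq:psiEq} unless it is paired with the $x$-dependence of the moving frame, which is governed by $\psi$. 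Once this verification has been carried out against the appendix formulas, the proof is complete.
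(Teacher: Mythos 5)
Your overall scheme --- read off the explicit formulas \eqref{def:A}--\eqref{def:C}, bound each term pointwise, and conclude via the two-dimensional embeddings $H^2(D)\hookrightarrow W^{1,4}(D)\hookrightarrow L^\infty(D)$, $H^1(D)\hookrightarrow L^4(D)$ and H\"older, telescoping the differences one argument at a time --- is exactly what the paper's proof does. But one of your guiding claims is wrong, and it is precisely the one carrying the structural facts you later rely on: you assert that the constraints $\psi\cdot n_*=0$ and $P\in(T_{Q_*}\mcS_*)^\perp$ ``play no role in the estimates themselves''. In the paper they are used twice. First, $\psi\perp n_*$ gives $|n_*+\psi|^2=1+|\psi|^2\geq 1$, so every denominator occurring in $g$, $\hat g$ is bounded below by $1$; no smallness of $\|\psi\|_{H^2(D)}$ (which you invoke through Lemma \ref{lem:TubNbhd}) is needed, and indeed the proposition is unconditional, with constants independent of $\psi$ and $P$, whereas your ``Taylor expansion under an a priori bound'' version only yields constants depending on that bound. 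Second, and more seriously, the identities $\partial_{\psi_j}\hat h(x,0,P)=\partial_{\psi_j}\mathring{h}_\eps(x,0,P)=0$ and the super-quadratic vanishing in $\psi$ of $\hat g$, $\hat h$ are consequences of $\psi\cdot n_*=0$ \emph{together with} $Pn_*\parallel n_*$, i.e.\ of $P\in(T_{Q_*}\mcS_*)^\perp$. Without them, $B_\eps[0,P]$ would contain terms quadratic in $P$, so \eqref{est:Best0} could not be linear in $\|P\|_{H^1(D)}$, and the $\Delta P$-term in \eqref{def:B} would not automatically carry a $\psi$-factor. Your alternative justification --- the substitution $Q=Q_\sharp+\eps^2P$ plus the second-order vanishing of $f_{\rm bulk}$ on $\mcS_*$ --- does not by itself produce these cancellations: the Hessian of $f_{\rm bulk}$ at $Q_\sharp$ depends on $\psi$, and its $\psi$-derivative at $\psi=0$ contracted against $P\otimes P$ vanishes only because of the two orthogonality relations.

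Apart from this attribution error (and the related slip that the $\psi$-free part of $B_\eps$ is linear in ``$\eps^2P,\eps^2\nabla P$'' --- it is linear in $P,\nabla P$ with no $\eps^2$-weight, which is why \eqref{est:Best0} carries none), the bookkeeping you describe coincides with the paper's argument: it splits $C_\eps$ into a $P$-free part and a remainder and estimates each monomial with the stated embeddings. Carrying out the deferred verification with the two constraints actually in hand is what closes the proof; as written, your outline dismisses the very ingredient that makes the claimed monomial structure true.
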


The proofs of Lemma \ref{lemma:EL} and Proposition \ref{prop:ABCest} are lengthy though elementary. We postpone them to Appendix \ref{App:Tech}.

\subsection{The linearized harmonic map problem}\label{ssec:LinHM}

In this subsection we briefly study the properties of the operator $L_{\parallel} = -\Delta - |\nabla n_*|^2$ appearing on the left hand side of \eqref{Eq:psiEq}, i.e. the linearized harmonic map operator at $n_*$ given by \eqref{Eq:n*Def}, as well as its inverse $L_\parallel^{-1}$.

\begin{proposition}\label{cor:LInverse}
For every $f \in L^2(D,\RR^3)$, the minimization problem
\[
\min \Big\{\int_D [|\nabla \zeta|^2 - |\nabla n_*|^2\,\zeta^2 - f\cdot \zeta]\,dx: \zeta \in H_0^1(D,\RR^3), \zeta \cdot n_* = 0 \text{ a.e. in } D\Big\}
\]
admits a minimizer which is the unique solution to the problem
\begin{equation}
\left\{\begin{array}{rcll}
L_\parallel \zeta \equiv -\Delta \zeta - |\nabla n_*|^2\zeta  &=& f + \lambda(x)\,n_* &\text{ in }D,\\
\zeta \cdot n_*	&=& 0 & \text{ in } D,\\
\zeta &=& 0 &\text{ on } \partial D,
\end{array}\right.
\label{Eq:LBVP}
\end{equation}
where $\lambda$ is a Lagrange multiplier.\footnote{The expression of the Lagrange multiplier $\lambda$ is given in \eqref{pg26}.}
\end{proposition}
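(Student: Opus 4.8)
The plan is to solve the constrained minimization problem by the direct method and then derive the Euler--Lagrange equation \eqref{Eq:LBVP} with its Lagrange multiplier; the uniqueness will follow from a convexity/spectral argument. First I would set up the function space $\mathcal{H} = \{\zeta \in H^1_0(D,\RR^3) : \zeta\cdot n_* = 0 \text{ a.e. in } D\}$, which is a closed subspace of $H^1_0(D,\RR^3)$ since $n_* \in C^\infty(\overline{D})$ (indeed $n_*$ extends smoothly up to $\partial D$ away from the origin, and is bounded with bounded derivatives on $D$ by \eqref{Eq:n*Def}), so the constraint $\zeta\cdot n_* = 0$ passes to $H^1$-limits. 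The quadratic form $\zeta \mapsto \int_D |\nabla n_*|^2 \zeta^2$ is a compact perturbation of $\int_D |\nabla\zeta|^2$ on $H^1_0$ (by Rellich, since $|\nabla n_*|^2 \in L^\infty(D)$ as $n_*$ is smooth up to the boundary), so the functional $E[\zeta] = \int_D [|\nabla\zeta|^2 - |\nabla n_*|^2\zeta^2 - f\cdot\zeta]\,dx$ is weakly lower semicontinuous on $\mathcal{H}$.

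The key step is coercivity of $E$ on $\mathcal{H}$. Here I would invoke that $n_*$ is a \emph{minimizing} harmonic map for \eqref{Eq:OFMinP}: its second variation is nonnegative, i.e. $\int_D [|\nabla\zeta|^2 - |\nabla n_*|^2\zeta^2]\,dx \ge 0$ for all $\zeta \in \mathcal{H}$. This alone gives only $\ge 0$, not coercivity, so the finer point is \emph{strict} positivity of the linearized operator $L_\parallel$ on $\mathcal{H}$. Since $n_*^+$ (and $n_*^-$) are, up to conformal reparametrization, the standard degree-$k/2$ maps $z \mapsto z^{k/2}$ composed with stereographic projection, one knows (cf. the analysis of the second variation of such harmonic maps, e.g. via the conformal invariance reducing to the Hopf differential / Jacobi fields being explicitly computable) that the only Jacobi fields vanishing on $\partial D$ are trivial; combined with the fact that $D$ is bounded and the potential $|\nabla n_*|^2$ is bounded, a standard argument (if $L_\parallel$ were not coercive, there would be a nonzero element of the kernel, i.e. a nontrivial Jacobi field with zero boundary trace) yields $\int_D [|\nabla\zeta|^2 - |\nabla n_*|^2\zeta^2]\,dx \ge c_0 \|\zeta\|_{H^1_0}^2$ for some $c_0 > 0$ and all $\zeta \in \mathcal{H}$. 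With coercivity in hand, $E$ is bounded below on $\mathcal{H}$, and a minimizing sequence is bounded in $\mathcal{H}$, hence has a weakly convergent subsequence whose limit, by weak lower semicontinuity, is a minimizer.

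To finish, I would derive the Euler--Lagrange equation. For $\zeta$ a minimizer and any $\phi \in H^1_0(D,\RR^3)$ (not necessarily tangential), decompose $\phi = \phi^{\mathrm{tan}} + (\phi\cdot n_*)n_*$ with $\phi^{\mathrm{tan}} = \phi - (\phi\cdot n_*)n_* \in \mathcal{H}$; testing with $\phi^{\mathrm{tan}}$ and using that $\zeta\cdot n_* = 0$ gives, after an integration by parts, $\int_D [\nabla\zeta\cdot\nabla\phi - |\nabla n_*|^2\zeta\cdot\phi - f\cdot\phi]\,dx = \int_D \lambda(x)\, n_*\cdot\phi\,dx$, which identifies the Lagrange multiplier $\lambda$ explicitly (it is the scalar forcing one needs to add so that the equation is consistent with $\zeta\cdot n_* = 0$; differentiating the constraint twice and pairing the equation against $n_*$ produces the formula for $\lambda$ recorded in \eqref{pg26}). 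Elliptic regularity then upgrades $\zeta \in H^2(D,\RR^3)$, so \eqref{Eq:LBVP} holds in the strong sense. Uniqueness of the solution to \eqref{Eq:LBVP}: if $\zeta_1,\zeta_2$ both solve it with multipliers $\lambda_1,\lambda_2$, then $\zeta_1-\zeta_2 \in \mathcal{H}$ solves $L_\parallel(\zeta_1-\zeta_2) = (\lambda_1-\lambda_2)n_*$ with zero boundary data; pairing with $\zeta_1-\zeta_2 \in \mathcal{H}$ kills the right-hand side (orthogonality to $n_*$) and coercivity forces $\zeta_1 = \zeta_2$. The main obstacle is the strict positivity/coercivity of $L_\parallel$ on tangential fields vanishing on $\partial D$; this is where the specific structure of $n_*$ as a minimizing harmonic map of the explicit form \eqref{Eq:n*Def} is essential, and I expect the authors either cite a known nondegeneracy result for these maps or exploit conformal invariance to reduce the second variation to an explicitly diagonalizable form.
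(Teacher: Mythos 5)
Your overall architecture (closed tangential subspace, direct method/Lax--Milgram, Euler--Lagrange equation with multiplier, uniqueness from coercivity) matches the paper's, but the crux of the matter --- the strict positivity of the quadratic form $I[\zeta]=\int_D\big[|\nabla\zeta|^2-|\nabla n_*|^2|\zeta|^2\big]\,dx$ on $\{\zeta\in H^1_0(D,\RR^3):\zeta\cdot n_*=0\}$ --- is not proved in your proposal; it is asserted via an appeal to an unspecified ``known nondegeneracy result'' for the maps \eqref{Eq:n*Def} (triviality of Jacobi fields vanishing on $\partial D$). This is exactly the nontrivial point, and your ``standard argument'' as stated is misleading: the scalar Dirichlet operator $-\Delta-|\nabla n_*|^2$ on $D$ does have a nontrivial kernel element vanishing on $\partial D$, namely $n_3=(n_*)_3=\frac{1-r^k}{1+r^k}$ (each component of a vector field proportional to $n_3$ gives a neutral direction in $H^1_0(D,\RR^3)$). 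So coercivity genuinely fails on all of $H^1_0(D,\RR^3)$ and can only be rescued by the tangency constraint; ruling out a neutral direction therefore cannot be done by generic spectral or conformal-invariance considerations alone --- one must show that no such kernel element is compatible with $\zeta\cdot n_*=0$, and no citation for this specific Dirichlet nondegeneracy is offered.

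The paper fills this gap with Lemmas \ref{lem:NSStab} and \ref{Lem:StrStab}. First, for scalar $\zeta\in H^1_0(D,\RR)$ one writes $\zeta=n_3\xi$ (possible since $n_3>0$ in $D$ and $L_\parallel n_3=0$), and the ground-state substitution gives $I[\zeta]=\int_D n_3^2|\nabla\xi|^2\,dx\ge 0$ with equality exactly when $\zeta\in\RR\, n_3$. Second, the infimum $\lambda_1$ of $I$ over tangential fields with unit $L^2$ norm is attained; if $\lambda_1=0$ the minimizer would achieve equality componentwise, hence each component would be a multiple of $n_3$, which contradicts $\bar\zeta\cdot n_*=0$ unless $\bar\zeta\equiv 0$. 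Thus $\lambda_1>0$, and since $|\nabla n_*|$ is bounded this upgrades to the full $H^1$ coercivity used in Lax--Milgram. If you replace your paragraph on coercivity by this argument (or an equivalent proof of the equality case), the rest of your proposal --- lower semicontinuity, derivation of the multiplier by testing with tangential projections of arbitrary $\phi\in H^1_0$, and uniqueness by pairing the difference of two solutions against itself --- goes through and coincides with the paper's reasoning; note only that the $H^2$ regularity and the explicit formula \eqref{pg26} for $\lambda$ are established in the paper afterwards, in Lemma \ref{Lem:LINorm}, and are not needed for the Proposition itself.
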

Using Proposition~\ref{cor:LInverse} we can define the inverse operator $L_\parallel^{-1}$.
\begin{definition}\label{def:Lpar}
For $f \in L^2(D,\RR^3)$, we define  $L_{\parallel}^{-1}f \in H^1_0(D,\RR^3)$ to be the unique solution to \eqref{Eq:LBVP}. 
\end{definition}

The proof of Proposition~\ref{cor:LInverse} is a standard argument using Lax-Milgram's theorem and the strict stability of $n_*$. For completeness, we give the proof in Appendix \ref{App:LpaInv}.

In the following lemma we prove some useful properties of $L_\parallel^{-1}$ required for our analysis.

\begin{lemma} \label{Lem:LINorm}
The range of the operator $L_{\parallel}^{-1}$ over $L^2$-data is
\begin{equation}
X  := \{\psi \in H^1_0(D,\RR^3) \cap H^2(D,\RR^3): \psi \cdot n_* = 0 \text{ in } D\}.
	\label{Eq:XSpaceDef}
\end{equation}
Furthermore, there exists some positive constant $C$ such that, for $f \in L^2(D, \RR^3)$,
\[
\|L_{\parallel}^{-1}f\|_{H^1(D)} \leq C\|f\|_{H^{-1}(D)} \text{ and }  \|L_{\parallel}^{-1}f\|_{H^2(D)} \leq C\|f\|_{L^2(D)}.
\]
\end{lemma}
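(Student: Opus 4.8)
The plan is to establish the three claims of Lemma~\ref{Lem:LINorm} in order: first the $H^1$ bound, then the $H^2$ bound, and finally the characterization of the range as the space $X$. Throughout I work with the orthogonal decomposition of $\RR^3$ at a.e.\ point $x\in D$ into $\RR n_*(x)$ and its orthogonal complement; since $n_*$ given by \eqref{Eq:n*Def} is smooth on $\bar D$, multiplication by $n_*$ and the projection $\xi\mapsto \xi-(\xi\cdot n_*)n_*$ are bounded operators on $L^2$, $H^1$ and $H^2$.

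\emph{Step 1: the $H^1$ estimate.} Let $f\in L^2(D,\RR^3)\subset H^{-1}(D,\RR^3)$ and set $\zeta=L_\parallel^{-1}f$, so that $\zeta$ is the minimizer produced by Proposition~\ref{cor:LInverse}. Testing \eqref{Eq:LBVP} with $\zeta$ itself (which is admissible since $\zeta\cdot n_*=0$, so the Lagrange multiplier term drops out) gives
\[
\int_D \big(|\nabla\zeta|^2 - |\nabla n_*|^2\,|\zeta|^2\big)\,dx = \int_D f\cdot\zeta\,dx \le \|f\|_{H^{-1}(D)}\,\|\zeta\|_{H^1(D)}.
\]
The key analytic input is the \emph{strict stability} of $n_*$ as a harmonic map — i.e.\ the coercivity of the quadratic form $\zeta\mapsto\int_D(|\nabla\zeta|^2-|\nabla n_*|^2|\zeta|^2)$ on $\{\zeta\in H^1_0(D,\RR^3):\zeta\cdot n_*=0\}$, which is exactly what underlies Proposition~\ref{cor:LInverse} (this is why $n_*$ is a \emph{minimizer} of \eqref{Eq:OFMinP} and the min--max in Proposition~\ref{cor:LInverse} is attained). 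Thus there is $c_0>0$ with $c_0\|\zeta\|_{H^1(D)}^2\le \int_D(|\nabla\zeta|^2-|\nabla n_*|^2|\zeta|^2)\,dx$, and combining with the displayed inequality yields $\|\zeta\|_{H^1(D)}\le c_0^{-1}\|f\|_{H^{-1}(D)}$.

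\emph{Step 2: the $H^2$ estimate.} Now take $f\in L^2$. From \eqref{Eq:LBVP}, $-\Delta\zeta = f+|\nabla n_*|^2\zeta+\lambda n_*$ with zero Dirichlet data, so by elliptic regularity $\|\zeta\|_{H^2(D)}\le C\big(\|f\|_{L^2(D)}+\| |\nabla n_*|^2\zeta\|_{L^2(D)}+\|\lambda n_*\|_{L^2(D)}\big)$. The middle term is $\le C\|\zeta\|_{L^2(D)}\le C\|f\|_{H^{-1}(D)}\le C\|f\|_{L^2(D)}$ by Step~1 and the smoothness of $n_*$. For the Lagrange multiplier, differentiating $\zeta\cdot n_*=0$ twice gives $\Delta(\zeta\cdot n_*)=0$, i.e.\ $(\Delta\zeta)\cdot n_* = -2\nabla\zeta:\nabla n_* - \zeta\cdot\Delta n_*$; pairing \eqref{Eq:LBVP} with $n_*$ and using $|n_*|^2=1$ then solves for $\lambda$ explicitly (this is the formula referenced in the footnote, \eqref{pg26}) as $\lambda = -f\cdot n_* - |\nabla n_*|^2(\zeta\cdot n_*) + 2\nabla\zeta:\nabla n_* + \zeta\cdot\Delta n_* = -f\cdot n_* + 2\nabla\zeta:\nabla n_*+\zeta\cdot\Delta n_*$, whence $\|\lambda\|_{L^2(D)}\le C(\|f\|_{L^2(D)}+\|\zeta\|_{H^1(D)})\le C\|f\|_{L^2(D)}$. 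Feeding this back gives $\|\zeta\|_{H^2(D)}\le C\|f\|_{L^2(D)}$, as claimed. Note this also shows the range of $L_\parallel^{-1}$ on $L^2$-data is contained in $X$.

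\emph{Step 3: surjectivity onto $X$.} It remains to see that every $\psi\in X$ lies in the range. Given such a $\psi$, define $f := -\Delta\psi - |\nabla n_*|^2\psi - \lambda_\psi n_*\in L^2(D,\RR^3)$, where $\lambda_\psi := (-\Delta\psi-|\nabla n_*|^2\psi)\cdot n_*$ is chosen precisely so that $f\cdot n_* = 0$ a.e.; since $\psi\in H^2$, indeed $f\in L^2$. Then $\psi$ solves the boundary value problem \eqref{Eq:LBVP} with this datum $f$ and multiplier $\lambda_\psi$: the interior equation holds by construction, $\psi\cdot n_*=0$ since $\psi\in X$, and $\psi=0$ on $\partial D$ since $\psi\in H^1_0$. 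By the uniqueness part of Proposition~\ref{cor:LInverse}, $\psi = L_\parallel^{-1}f$, so $\psi$ is in the range. This proves $X$ is exactly the range of $L_\parallel^{-1}$ over $L^2$-data.

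The only genuinely non-routine point is the coercivity estimate in Step~1 — i.e.\ the strict stability of $n_*$ — but that is precisely the content underlying Proposition~\ref{cor:LInverse}, which we are entitled to use; everything else is elliptic regularity plus bookkeeping with the smooth vector field $n_*$. A minor care point is making the identity $\Delta(\zeta\cdot n_*)=0$ rigorous at the $H^2$ level, which is fine since $\zeta\cdot n_*\in H^1_0$ vanishes identically, so all its distributional derivatives vanish; the extraction of $\lambda$ should be phrased as an $L^2$ identity obtained by pairing the equation against $n_*$ rather than by pointwise manipulation.
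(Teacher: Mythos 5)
Your proof is correct and follows essentially the same route as the paper's: the $H^1$ bound comes from testing against $\zeta$ and the coercivity of the quadratic form (which is exactly Lemma \ref{Lem:StrStab}, the strict stability of $n_*$ that the paper's proof invokes), and the $H^2$ bound comes from identifying the Lagrange multiplier as $\lambda = 2\nabla\zeta\cdot\nabla n_* - f\cdot n_*$ (your extra term $\zeta\cdot\Delta n_*$ vanishes since $\Delta n_*\parallel n_*$) followed by elliptic regularity. The only difference is that you spell out the surjectivity of $L_\parallel^{-1}$ onto $X$ in Step 3, which the paper leaves implicit; that step is correct as written.
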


\begin{proof} Let $f \in L^2(D,\RR^3)$ and $\zeta \in H_0^1(D,\RR^3)$ be the solution of \eqref{Eq:LBVP}. We first show that $\zeta \in H^2(D,\RR^3)$. 
Let us fix some $\xi \in C_c^\infty(D)$. Testing \eqref{Eq:LBVP} against $\xi\,n_*$ and noting that $n_* \cdot \zeta = 0 = \Delta n_* \cdot \zeta $, we obtain by integration by parts
\begin{align*}
\int_D \xi(f \cdot n_* + \lambda)\,dx
	&= \int_D \nabla \zeta \cdot \nabla (\xi n_*) \,dx= -\int_D \zeta \cdot \Delta (\xi n_*) \,dx\\
	&= -2 \int_D \zeta \cdot (\nabla n_*(\nabla\xi)) \,dx= 2\int_D \xi \nabla \zeta \cdot \nabla n_*\,dx.
\end{align*}
Since this is true for all $\xi \in C_c^\infty(D)$, it follows that
\be
\label{pg26}
\lambda = 2 \nabla \zeta \cdot \nabla n_*  - f \cdot n_* \in L^2(D).
\ee
By elliptic regularity for \eqref{Eq:LBVP}, we conclude that $\zeta \in H^2(D,\RR^3)$.

We next turn to estimating $\zeta$. Testing \eqref{Eq:LBVP} against $\zeta$ and using Lemma \ref{Lem:StrStab}, we obtain
\[
c_0\|\zeta\|_{H^1(D)}^2 \leq \int_D [|\nabla \zeta|^2 - |\nabla n_*|^2\,|\zeta|^2]\,dx = \int_D f \cdot \zeta\,dx \leq \|f\|_{H^{-1}(D)}\,\|\zeta\|_{H^1(D)}.
\]
This implies $\|L_{\parallel}^{-1}f\|_{H^1(D)} = \|\zeta\|_{H^1(D)} \leq C\,\|f\|_{H^{-1}(D)}$. Using this estimate in \eqref{pg26} we have $\|\lambda\|_{H^{-1}(D)} \leq C\,\|f\|_{H^{-1}(D)}$ and $\|\lambda\|_{L^2(D)} \leq C\|f\|_{L^2(D)}$. Employing elliptic estimates for \eqref{Eq:LBVP} we obtain  $\|\zeta\|_{H^2(D)} \leq C\|f\|_{L^2(D)}$.
\end{proof}

\subsection{The transversal linearized problem}\label{ssec:LPInverse}

In this section we study the linear operator appearing on the left hand side of \eqref{Eq:PEq}
\be \label{eq:Lperp}
L_{\eps,\perp} P = -\eps^2\Delta P + b^2\,s_+\,P + 2 (c^2\,s_+ - b^2)\,(Pn_* \cdot n_*) Q_*.
\ee  
As in the previous subsection we would like to define the inverse operator $L_{\eps,\perp}^{-1}$ and prove some properties required for our analysis.

We claim that
$P \mapsto b^2\,s_+\,P + 2 (c^2\,s_+ - b^2)\,(Pn_* \cdot n_*) Q_*$ is a monotone linear operator, namely
\be \label{eq:mon}
b^2\,s_+\,|P|^2 + 2s_+ (c^2\,s_+ - b^2)\,(Pn_* \cdot n_*)^2 \geq \min(2a^2 + \frac{b^2}{3}s_+, b^2s_+)|P|^2,  \forall P(x) \in (T_{Q_*}\mcS_*)^\perp.
\ee
Indeed, recall that $n_*$ is an eigenvector of $ P\in (T_{Q_*}\mcS_*)^\perp$. Thus, in some orthonormal basis of $\RR^3$, $P$ takes the form $\textrm{diag}(\lambda_1, \lambda_2, -\lambda_1 - \lambda_2)$ with $Pn_* = \lambda_1 n_*$. It is not hard to see that this implies $(Pn_* \cdot n_*)^2 = \lambda_1^2 \leq \frac{4}{3} (\lambda_1^2 + \lambda_2^2 + \lambda_1 \lambda_2) =\frac{2}{3}|P|^2$. The inequality \eqref{eq:mon} follows in view of the identity $-a^2 - \frac{b^2}{3} s_+ + \frac{2c^2}{3} s_+^2 = 0$.\footnote{Alternatively, one can argue that the operator $P \mapsto b^2\,s_+\,P + 2 (c^2\,s_+ - b^2)\,(Pn_* \cdot n_*) Q_*$ is an automorphism of $(T_{Q_*}\mcS_*)^\perp$ which has eigenvalues $2a^2 + \frac{b^2}{3}s_+$ along the direction parallel to $Q_*$ and $b^2s_+$ along the directions perpendicular to $Q_*$, which also gives \eqref{eq:mon}.}

Using \eqref{eq:mon} and Lax-Milgram's theorem in the Hilbert space
\[
\{P \in H_0^1(D,\mcS_0): P \in (T_{Q_*}\mcS_*)^\perp \text{ a.e. in } D\},
\]
one can easily show that, for every $q \in L^2(D,\mcS_0)$, there exists a unique solution $P \in H^1_0(D,\mcS_0)$ to the problem
\begin{equation}
\left\{\begin{array}{rcll}
L_{\eps,\perp} P &=& q + F(x) &\text{ in } D,\\
P &\in&  (T_{Q_*}\mcS_*)^\perp &\text{ a.e. in } D,\\
P &=& 0 & \text{ on } \partial D,
\end{array}\right.
	\label{Eq:LPBVP}
\end{equation}
where $F(x) \in T_{Q_*}\mcS_*$ is a Lagrange multiplier accounting for the constraint $P \in  (T_{Q_*}\mcS_*)^\perp$ a.e. in $D$. Therefore we have the following definition.

\begin{definition}\label{def:Lperp}
For $q \in  L^2(D,\mcS_0)$, we define  $L_{\eps,\perp}^{-1} q \in  H^1_0(D,\mcS_0)$ to be the unique solution to \eqref{Eq:LPBVP}. 
\end{definition}

We summarise properties of the operator $L_{\eps,\perp}^{-1}$ in the following lemma.

\begin{lemma}\label{Lem:LPInverseEst}
For every $\eps > 0$, the range of the operator $L_{\eps,\perp}^{-1}: L^2(D,\mcS_0) \rightarrow H^1_0(D,\mcS_0)$ is
\begin{equation}
Y : = \{P \in H^1_0(D,\mcS_0) \cap H^2(D,\mcS_0): P \in  (T_{Q_*}\mcS_*)^\perp \text{ in } D\}.
	\label{Eq:YSpaceDef}
\end{equation}
Furthermore, there exists $C > 0$ such that, for every $0 < \eps < 1$,
\[
\|L_{\eps,\perp}^{-1}q\|_\eps
 \leq C\|q\|_{L^2(D)} \text{ for all } q \in L^2(D,\mcS_0),
\]
where $\|\cdot\|_\eps$ was defined in \eqref{Eq:H2epsNorm}.
\end{lemma}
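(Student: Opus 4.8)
The proof splits into two parts: the characterization of the range and the $\eps$-dependent estimate. For the range, the key observation is that \eqref{Eq:LPBVP} is an elliptic system (after projecting out the Lagrange multiplier $F$) with $L^2$ right-hand side on a smooth two-dimensional domain. First I would derive an explicit formula for $F$ by testing \eqref{Eq:LPBVP} against an arbitrary tangential field $\xi(x) \in T_{Q_*}\mcS_*$, in analogy with \eqref{pg26}: this expresses $F$ in terms of $\nabla P$, $\nabla n_*$, $P$ and $q$, and shows $F \in L^2(D,\mcS_0)$ once we know $P \in H^1(D,\mcS_0)$. (Here one uses that $\Delta P \cdot \xi$, $P \cdot \xi$, and the algebraic lower-order terms can all be moved around by integration by parts because the constraint $P \in (T_{Q_*}\mcS_*)^\perp$ is pointwise and the projection onto $(T_{Q_*}\mcS_*)^\perp$ is a smooth matrix-valued function of $x$.) With $F \in L^2$, standard interior and boundary elliptic regularity for $-\eps^2\Delta P = q + F - b^2 s_+ P - 2(c^2 s_+ - b^2)(Pn_*\cdot n_*)Q_*$ gives $P \in H^2(D,\mcS_0)$, so the range is contained in $Y$; the reverse inclusion is immediate since any $P \in Y$ solves \eqref{Eq:LPBVP} with $q := L_{\eps,\perp}P$, which is in $L^2$.

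\textbf{The $\eps$-uniform estimate.} This is the heart of the lemma and the main obstacle, because naive elliptic estimates produce constants that blow up as $\eps \to 0$. The strategy is to exploit the coercivity \eqref{eq:mon} carefully at three scales. Step one: test \eqref{Eq:LPBVP} against $P$ itself. Since $F \perp P$ pointwise, the Lagrange multiplier drops out, and \eqref{eq:mon} yields
\[
\eps^2 \|\nabla P\|_{L^2(D)}^2 + c_1 \|P\|_{L^2(D)}^2 \leq \|q\|_{L^2(D)} \|P\|_{L^2(D)},
\]
with $c_1 = \min(2a^2 + \tfrac{b^2}{3}s_+, b^2 s_+) > 0$ independent of $\eps$. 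This gives $\|P\|_{L^2(D)} \leq C\|q\|_{L^2(D)}$ and $\eps\|\nabla P\|_{L^2(D)} \leq C\|q\|_{L^2(D)}$, controlling the first two terms of $\|P\|_\eps$. Step two: for the second-derivative term, rewrite the equation as $-\eps^2 \Delta P = \tilde q$ where $\tilde q := q + F - b^2 s_+ P - 2(c^2 s_+ - b^2)(Pn_* \cdot n_*)Q_*$, and apply the standard Calderón–Zygmund/$H^2$ elliptic estimate $\|\nabla^2 P\|_{L^2(D)} \leq C\eps^{-2}\|\tilde q\|_{L^2(D)}$ (the constant $C$ here is purely the elliptic constant of the Laplacian on $D$ with zero Dirichlet data and does not involve $\eps$). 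Multiplying by $\eps^2$, it remains to bound $\|\tilde q\|_{L^2(D)}$ by $C\|q\|_{L^2(D)}$: the terms $b^2 s_+ P$ and $(Pn_*\cdot n_*)Q_*$ are controlled by $\|P\|_{L^2(D)} \leq C\|q\|_{L^2(D)}$ from step one, so the only remaining issue is $\|F\|_{L^2(D)}$.

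\textbf{Controlling the Lagrange multiplier.} From the formula for $F$ obtained in part one, $F$ is a sum of terms of the form (smooth matrix field)$\cdot \nabla P \cdot \nabla n_*$, (smooth field)$\cdot P$, and the tangential component of $q$; the potentially dangerous term is the one involving $\nabla P$, because we only have $\eps\|\nabla P\|_{L^2} \leq C\|q\|_{L^2}$, i.e. an $\eps^{-1}$ loss. The resolution is that $F$ is in fact \emph{not} first-order in $P$: differentiating the constraint $P(x) \in (T_{Q_*}\mcS_*)^\perp$, the terms linear in $\nabla P$ that arise from $\Delta P$ against a tangential test field reorganize — using that $\nabla(\text{orthogonal projection onto }(T_{Q_*}\mcS_*)^\perp)$ applied to $P$ equals (projection)$\cdot(\text{second fundamental form})\cdot(\nabla n_*, P)$, which is zeroth order in $\nabla P$. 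More concretely, one shows $F = \Pi_{T_{Q_*}\mcS_*}\big(q + 2(c^2 s_+-b^2)(Pn_*\cdot n_*)Q_*\big) + (\text{terms quadratic in }\nabla n_* \text{ times } P)$, all of which are bounded in $L^2$ by $C(\|q\|_{L^2} + \|P\|_{L^2}) \leq C\|q\|_{L^2}$ since $n_*$ is smooth on $\overline{D}$. (This is the same mechanism as in Lemma~\ref{Lem:LINorm}, where the multiplier $\lambda = 2\nabla\zeta\cdot\nabla n_* - f\cdot n_*$ looked first-order but combines against $\zeta \perp n_*$ to lose nothing.) Feeding this back, $\|\tilde q\|_{L^2(D)} \leq C\|q\|_{L^2(D)}$, hence $\eps^2\|\nabla^2 P\|_{L^2(D)} \leq C\|q\|_{L^2(D)}$, and adding the three pieces gives $\|L_{\eps,\perp}^{-1}q\|_\eps \leq C\|q\|_{L^2(D)}$ with $C$ independent of $\eps \in (0,1)$. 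The one point requiring care is that the elliptic $H^2$-estimate for the Laplacian on $D$ with zero boundary data has an $\eps$-free constant — this is standard since $-\eps^2\Delta$ and $-\Delta$ have the same kernel structure and the estimate is just scaled by $\eps^{-2}$.
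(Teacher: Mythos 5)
Your overall skeleton (test against $P$ to get $\|P\|_{L^2} + \eps\|\nabla P\|_{L^2} \leq C\|q\|_{L^2}$ from \eqref{eq:mon}; bound the multiplier $F$ in $L^2$; then apply the $H^2$ elliptic estimate for $-\eps^2\Delta$ and multiply back by $\eps^2$) is exactly the paper's, and your range argument is also essentially the paper's (the paper shows $F\in L^2$ via the estimate \eqref{Eq:PBX} for $\mcPP(\Delta P)$, where $\mcPP$ is the orthogonal projection onto $T_{Q_*}\mcS_*$, and then invokes elliptic regularity). The problem is in your step ``Controlling the Lagrange multiplier''. Taking the tangential projection of \eqref{Eq:LPBVP} and using $\mcPP(P)=0$, $\mcPP(Q_*)=0$, the correct identity is $F=-\eps^2\,\mcPP(\Delta P)-\mcPP(q)$; the first-order-in-$\nabla P$ contribution therefore enters $F$ \emph{with the factor $\eps^2$}, so there is no ``$\eps^{-1}$ loss'' to begin with: $\eps^2\|\nabla P\|_{L^2}\leq C\eps\|q\|_{L^2}$ by your step one, which is precisely how the paper closes the argument (its estimate \eqref{eq:est2}). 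Your proposed fix — that the $\nabla P$ terms ``reorganize'' into zeroth-order terms so that $F=\mcPP(q+\cdots)+(\text{terms quadratic in }\nabla n_*\text{ times }P)$ — is not correct. Differentiating the constraint only gives that the \emph{tangential} component of $\nabla_k P$, namely $\mcPP(\nabla_k P)=-(\nabla_k\mcPP)(P)$, is zeroth order; but $\mcPP(\Delta P)=-2(\nabla_k\mcPP)(\nabla_k P)-(\Delta\mcPP)(P)$ also involves $(\nabla_k\mcPP)$ acting on the \emph{normal} part of $\nabla_k P$, which is genuinely first order and, since $\nabla_k\mcPP$ is off-diagonal with respect to the splitting $T_{Q_*}\mcS_*\oplus(T_{Q_*}\mcS_*)^\perp$, lands in the tangent space and does not drop out of $F$. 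Your appeal to Lemma \ref{Lem:LINorm} is also misremembered: there the multiplier $\lambda=2\nabla\zeta\cdot\nabla n_*-f\cdot n_*$ in \eqref{pg26} \emph{is} genuinely first order in $\zeta$, and is controlled via $\|\zeta\|_{H^1}\leq C\|f\|_{H^{-1}}$, not by any cancellation.

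So the step as written rests on a false identity, even though the lemma's conclusion is unaffected once the $\eps^2$ bookkeeping is restored: with $F=-\eps^2\mcPP(\Delta P)-\mcPP(q)$ and the commutator bound $\|\mcPP(\Delta P)\|_{L^2}\leq C\|P\|_{H^1}$ (which follows from $\Delta(\mcPP P)=0$ and the smoothness of $Q_*$, as in the paper), one gets $\|F\|_{L^2}\leq C(\eps^2\|\nabla P\|_{L^2}+\|P\|_{L^2}+\|q\|_{L^2})\leq C\|q\|_{L^2}$ for $0<\eps<1$, and then $\eps^2\|\nabla^2 P\|_{L^2}\leq C\|q\|_{L^2}$ exactly as you intended. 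I recommend you replace the ``second fundamental form'' claim by this computation; as stated it is a genuine error, not merely a shortcut.
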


\begin{proof} In the proof $C$ will denote some generic constant which varies from line to line but is always independent of $\eps$.

Let us fix some $q \in L^2(D,\mcS_0)$ and let $P$ be the solution of \eqref{Eq:LPBVP}. Testing \eqref{Eq:LPBVP} against $P$ and using \eqref{eq:mon}, we obtain
\begin{equation}
\|P\|_{L^2(D)} + \eps \|\nabla P\|_{L^2(D)} \leq C\|q\|_{L^2(D)}.
	\label{Eq:P1.EstX}
\end{equation}
Next, we would like to show that $P \in H^2(D,\mcS_0)$. Let $\mcPP = \mcPP_x$ be the orthogonal projection of $\mcS_0$ onto $T_{Q_*(x)} \mcS_*$. Then, the first equation of \eqref{Eq:LPBVP} is equivalent to
\[
F(x) = \mcPP_x(L_{\eps,\perp} P(x) - q(x)).
\]
Here, we naturally extended $\mcPP$ to distributions, in particular to $\Delta P \in H^{-1}$, by defining $\langle \mcPP (\Delta P), \zeta \rangle := \langle \Delta P, \mcPP(\zeta) \rangle$ for every test function $\zeta \in C_c^\infty(D,\mcS_0)$. Therefore, to show that $F \in L^2(D,\mcS_0)$, it is enough to show that 
\begin{equation}
\text{$\mcPP(\Delta P) \in L^2(D,\mcS_0)$ for any $P \in H^1_0(D,\mcS_0)$, $P(x) \in  (T_{Q_*} \mcS_*)^\perp$ a.e. in $D$}.
\label{Eq:ProjBoost}
\end{equation}
In fact, we show
\begin{equation}
\|\mcPP(\Delta P)\|_{L^2(D)} \leq C\|P\|_{H^1(D)}
	\label{Eq:PBX}
\end{equation}
for all $P \in C_c^\infty(D,\mcS_0)$ satisfying $P(x) \in (T_{Q_*} \mcS_*)^\perp$ in $D$. To this end we use the following formula for $\mcPP$ which was computed in \cite[Eq. (3.4)]{NZ13-CVPDE}:\footnote{The brackets below indeed commute thanks to $Q_*^2-\frac13s_+Q_*-\frac29 s_+^2I_3=0$.  }
\begin{align*}
\mcPP(A) 
	&= A + \frac{2}{s_+^2}\left(\frac{1}{3}s_+ A - AQ_* - Q_* A\right)\left(Q_* - \frac{1}{6}s_+\,I_3\right)\\
	&= A + \frac{2}{s_+^2}\left(Q_* - \frac{1}{6}s_+\,I_3\right)\left(\frac{1}{3}s_+ A - AQ_* - Q_* A\right), \qquad \text{ for all } A \in \mcS_0.
\end{align*}
Since $P(x) \in (T_{Q_*} \mcS_*)^\perp$ in $D$, $\mcPP(P) = 0$ in $D$ and so $\Delta \mcPP(P) = 0$ in $D$. On the other hand, as $Q_*$ is smooth, it follows from the above formula for $\Pi$, applied to $P$ and $\Delta P$, that
\[
|\Delta \mcPP(P) - \mcPP(\Delta P)| \leq C(|\nabla P| + |P|).
\]
Combining the above two facts, we obtain \eqref{Eq:PBX} and hence \eqref{Eq:ProjBoost}.

It follows that $F \in L^2(D,\mcS_0)$ and $P \in H^2(D,\mcS_0)$. Moreover, for $0 < \eps < 1$,
\be \label{eq:est2}
\|F\|_{L^2(D)} \leq C(\eps^2 \|\nabla P\|_{L^2(D)} + \|P\|_{L^2(D)} + \|q\|_{L^2(D)}).
\ee
Using previously established estimates \eqref{Eq:P1.EstX} and \eqref{eq:est2} we obtain that $\| F \|_{L^2(D)} \leq C \|q\|_{L^2(D)}$. Returning to the first equation in \eqref{Eq:LPBVP}, elliptic estimates yield $\eps^2\|\nabla^2 P\|_{L^2(D)} \leq C\,\|q\|_{L^2(D)}$.
\end{proof}

\subsection{Solution of \eqref{Eq:psiEq} for given $P$}\label{ssec:psiP}

In this section we solve equation \eqref{Eq:psiEq} for given $P$. The properties of the map $P \mapsto \psi_\eps (P)$ obtained in this section will be used later in proving uniqueness of the critical point of $\mcF_\eps$ in a small neighbourhood of $Q_*$ using fixed point arguments.

\vskip 0.3cm

We define the following set 
\begin{equation}
\mathcal{U}_{\eps,C_1,C_2} := \Big\{P \in Y :\eps^2\|\nabla^2 P\|_{L^2(D)}  \leq \frac{1}{C_2}, \|P\|_{L^2(D)} \leq C_1\Big\},
\label{Eq:UeCCDef}
\end{equation}
where $Y$ is given in \eqref{Eq:YSpaceDef}. Note that, by integration by parts,
\begin{equation}
\eps\|\nabla P\|_{L^2(D)} \leq C_1^{1/2}\,C_2^{-\frac{1}{2}}\text{ for all } P \in \mathcal{U}_{\eps,C_1,C_2}.
	\label{Eq:11V19-UeCCInter}
\end{equation}

\begin{proposition}\label{Prop:psiofP}
Let $X$ be defined by \eqref{Eq:XSpaceDef}. For every $C_1 > 0$, there exist large $C_2 > 1$ and small $\eps_0 > 0$ such that, for every $0 < \eps < \eps_0$,
\begin{enumerate}[(i)]
\item For every $P \in \mathcal{U}_{\eps,C_1,C_2}$, there exists a unique $\psi_\eps(P) \in X$ satisfying simultaneously equation \eqref{Eq:psiEq} and $\|\psi_\eps(P)\|_{H^2(D)} \leq \frac{1}{C_2}$.
\end{enumerate}
Furthermore, there exists $C > 0$ (depending on $C_1, C_2$) such that:
\begin{enumerate}[(i)]
\setcounter{enumi}{1}
\item For every $P \in \mathcal{U}_{\eps,C_1,C_2}$, $\|\psi_\eps(P)\|_{H^2(D)} \leq C \eps^2\|P\|_{H^1(D)}$. In particular, $\psi_\eps(0) = 0$.
\item For every $P, \tilde P \in \mathcal{U}_{\eps,C_1, C_2}$, 
\[
\|\psi_\eps(P) - \psi_\eps(\tilde P)\|_{H^2(D)} 
	\leq C\eps \|P -\tilde P\|_{\eps},
\]
where $\| \cdot \|_\eps$ is defined in \eqref{Eq:H2epsNorm}.
\end{enumerate}
\end{proposition}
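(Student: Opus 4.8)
The plan is to solve \eqref{Eq:psiEq} for $\psi$ as a fixed point of the map
\[
\Phi_{\eps,P}[\psi] := L_\parallel^{-1}\big(A[\psi] + \eps^2 B_\eps[\psi,P]\big),
\]
where $L_\parallel^{-1}$ is the operator from Definition \ref{def:Lpar}; note that applying $L_\parallel^{-1}$ automatically takes care of the Lagrange multiplier term $\lambda_\eps n_*$ and of the constraints $\psi \cdot n_* = 0$, $\psi \in X$ (Lemma \ref{Lem:LINorm}). First I would fix $C_1 > 0$ and look for $\psi$ in the closed ball $\mcB_\rho := \{\psi \in X : \|\psi\|_{H^2(D)} \le \rho\}$ with $\rho = \frac{1}{C_2}$, and show that for $C_2$ large and $\eps_0$ small, $\Phi_{\eps,P}$ maps $\mcB_\rho$ into itself and is a contraction there. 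For the self-map property, use Lemma \ref{Lem:LINorm} to get $\|\Phi_{\eps,P}[\psi]\|_{H^2(D)} \le C\|A[\psi] + \eps^2 B_\eps[\psi,P]\|_{L^2(D)}$, then estimate $\|A[\psi]\|_{L^2}$ via \eqref{est:Aest0}--\eqref{est:Aest} (which gives $\|A[\psi]\|_{L^2} \le C\rho(1+\rho)\rho \lesssim \rho^2$ since $\rho \le 1$) and $\|B_\eps[\psi,P]\|_{L^2}$ via \eqref{est:Best0}, \eqref{est:Best1} together with $\eps^2\|\nabla^2 P\|_{L^2} \le 1/C_2$ and \eqref{Eq:11V19-UeCCInter} (so $\eps\|\nabla P\|_{L^2} \le C_1^{1/2}C_2^{-1/2}$), yielding $\eps^2\|B_\eps[\psi,P]\|_{L^2} \le C\eps^2\|P\|_{H^1} + (\text{terms involving }\rho) \le C\eps\big(C_1 + C_1^{1/2}C_2^{-1/2}\big) + C\rho(\cdots)$. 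Choosing first $C_2$ large (to absorb the $\rho^2 = C_2^{-2}$ and $\rho$-linear contributions into $\frac12\rho$) and then $\eps_0$ small (to absorb the $O(\eps)$ contribution), one gets $\|\Phi_{\eps,P}[\psi]\|_{H^2} \le \rho$. Contractivity is analogous: $\|\Phi_{\eps,P}[\psi] - \Phi_{\eps,P}[\tilde\psi]\|_{H^2} \le C\|A[\psi]-A[\tilde\psi]\|_{L^2} + C\eps^2\|B_\eps[\psi,P] - B_\eps[\tilde\psi,P]\|_{L^2}$, bounded using \eqref{est:Aest} and \eqref{est:Best1} by $\big(C\rho(1+\rho) + C\eps^2[\|\nabla^2 P\|_{L^2} + (1+\eps^2\|P\|_{H^2})\|P\|_{L^4}^2]\big)\|\psi-\tilde\psi\|_{H^2}$; the first factor is $\le \frac12$ after shrinking $C_2$ further and $\eps_0$, using again $\eps^2\|\nabla^2 P\|_{L^2} \le 1/C_2$ and the embedding $H^1(D) \hookrightarrow L^4(D)$ to bound $\|P\|_{L^4}^2 \lesssim \|P\|_{H^1}^2$ and hence $\eps^2\|P\|_{L^4}^2 \lesssim (\eps\|\nabla P\|_{L^2} + \eps\|P\|_{L^2})^2 = O(1)$. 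This proves existence and uniqueness in $\mcB_\rho$, i.e. part (i). (Uniqueness of $\psi_\eps(P)$ as stated is uniqueness within the constraint $\|\psi\|_{H^2} \le 1/C_2$, which is exactly $\mcB_\rho$.)

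For part (ii), apply the fixed-point identity $\psi_\eps(P) = L_\parallel^{-1}(A[\psi_\eps(P)] + \eps^2 B_\eps[\psi_\eps(P),P])$ once more and bootstrap: by Lemma \ref{Lem:LINorm}, \eqref{est:Aest} with $\tilde\psi = 0$ (and $A[0]=0$), and \eqref{est:Best1} with $\tilde\psi = 0$ together with $B_\eps[0,P]$ estimated by \eqref{est:Best0},
\[
\|\psi_\eps(P)\|_{H^2} \le C\rho(1+\rho)\|\psi_\eps(P)\|_{H^2} + C\eps^2\|P\|_{H^1} + C\eps^2\big[\|\nabla^2 P\|_{L^2} + (1+\eps^2\|P\|_{H^2})\|P\|_{L^4}^2\big]\|\psi_\eps(P)\|_{H^2}.
\]
Since $\rho = 1/C_2$ is small and $\eps^2\|\nabla^2 P\|_{L^2} \le 1/C_2$, $\eps^2\|P\|_{L^4}^2 = O(1)$, the coefficient of $\|\psi_\eps(P)\|_{H^2}$ on the right is $\le \frac12$, so it can be absorbed, leaving $\|\psi_\eps(P)\|_{H^2} \le C\eps^2\|P\|_{H^1}$, and in particular $\psi_\eps(0) = 0$.

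For part (iii), subtract the fixed-point identities for $P$ and $\tilde P$:
\[
\psi_\eps(P) - \psi_\eps(\tilde P) = L_\parallel^{-1}\Big(A[\psi_\eps(P)] - A[\psi_\eps(\tilde P)] + \eps^2\big(B_\eps[\psi_\eps(P),P] - B_\eps[\psi_\eps(\tilde P),\tilde P]\big)\Big),
\]
split the $B_\eps$ difference as $\big(B_\eps[\psi_\eps(P),P] - B_\eps[\psi_\eps(\tilde P),P]\big) + \big(B_\eps[\psi_\eps(\tilde P),P] - B_\eps[\psi_\eps(\tilde P),\tilde P]\big)$, and apply Lemma \ref{Lem:LINorm} together with \eqref{est:Aest}, \eqref{est:Best1}, \eqref{est:Best2}. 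The first three pieces each carry a factor $\|\psi_\eps(P) - \psi_\eps(\tilde P)\|_{H^2}$ with small coefficient (as in the contraction step) and get absorbed into the left side; the remaining piece, from \eqref{est:Best2}, contributes
\[
C\eps^2\Big(\|P-\tilde P\|_{H^1} + \|\psi_\eps(\tilde P)\|_{H^2}\big(\|P-\tilde P\|_{H^2} + (\|P\|_{H^2}+\|\tilde P\|_{H^2})(1+\cdots)\|P-\tilde P\|_{L^2}\big)\Big).
\]
Here is where the definition of $\|\cdot\|_\eps$ in \eqref{Eq:H2epsNorm} is designed to fit: $\eps^2\|P-\tilde P\|_{H^1} \le \eps\|P-\tilde P\|_\eps$, $\eps^2\|P-\tilde P\|_{H^2} \le \|P-\tilde P\|_\eps$ but this last term is multiplied by $\|\psi_\eps(\tilde P)\|_{H^2} = O(\eps^2)$ by part (ii), so it is $O(\eps^2)\|P-\tilde P\|_\eps$; and the $\|P-\tilde P\|_{L^2}$ term, multiplied by $\|\psi_\eps(\tilde P)\|_{H^2}(\|P\|_{H^2}+\|\tilde P\|_{H^2})\eps^2 = O(\eps^2)\cdot O(\eps^{-2})\cdot\eps^2 = O(\eps^2)$ (using $\eps^2\|P\|_{H^2} = O(1)$ from $P\in\mcU_{\eps,C_1,C_2}$ and $\|P-\tilde P\|_{L^2} \le \|P-\tilde P\|_\eps$), is also $O(\eps^2)\|P-\tilde P\|_\eps$. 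Collecting, $\|\psi_\eps(P)-\psi_\eps(\tilde P)\|_{H^2} \le C\eps\|P-\tilde P\|_\eps$, which is (iii).

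\emph{Main obstacle.} The delicate point is the bookkeeping of $\eps$-powers against the $\eps$-dependent norm $\|\cdot\|_\eps$: one must repeatedly exploit that on $\mcU_{\eps,C_1,C_2}$ the quantities $\eps^2\|\nabla^2 P\|_{L^2}$ and $\eps^2\|P\|_{H^2}$ are $O(1)$ (indeed $O(1/C_2)$) while $\eps\|\nabla P\|_{L^2}$ is small, and that $\|\psi_\eps(P)\|_{H^2} = O(\eps^2\|P\|_{H^1})$ is itself quadratically small in $\eps$ relative to $P$; the gain of a full power of $\eps$ in (iii) (rather than the $\eps^2$ one might naively expect, which is false because $\|\cdot\|_\eps$ only controls $\eps^2\|\nabla^2(P-\tilde P)\|_{L^2}$, not $\|P-\tilde P\|_{H^2}$) comes precisely from pairing the $\eps^2$ in front of $B_\eps$ against at most one inverse power of $\eps$ hidden in converting $\|P-\tilde P\|_{H^1}$ or $\|P-\tilde P\|_{H^2}$ into $\|P-\tilde P\|_\eps$. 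Choosing the constants in the right order — $C_1$ given, then $C_2 = C_2(C_1)$ large, then $\eps_0 = \eps_0(C_1,C_2)$ small — so that all absorptions are legitimate is the crux; everything else is a routine application of Lemma \ref{Lem:LINorm} and Proposition \ref{prop:ABCest}.
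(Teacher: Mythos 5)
Your proposal is correct and follows essentially the same route as the paper: the same fixed-point map $\psi\mapsto L_\parallel^{-1}(A[\psi]+\eps^2 B_\eps[\psi,P])$ made contractive on the ball $\{\|\psi\|_{H^2(D)}\le 1/C_2\}$ via Lemma \ref{Lem:LINorm} and Proposition \ref{prop:ABCest}, with (ii) and (iii) obtained by comparing fixed points (the paper phrases your absorption steps through Lemma \ref{Lem:ContractProperty}, which is equivalent). One small bookkeeping slip: on $\mathcal{U}_{\eps,C_1,C_2}$ part (ii) only gives $\|\psi_\eps(\tilde P)\|_{H^2(D)}\le C\eps^2\|\tilde P\|_{H^1(D)}=O(\eps)$, not $O(\eps^2)$, since $\|\nabla\tilde P\|_{L^2(D)}$ may be of order $\eps^{-1}$; this weaker bound still makes every term in your step (iii) of size at most $C\eps\|P-\tilde P\|_\eps$, so the conclusion is unaffected.
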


\begin{proof} Let us fix some $C_1 > 0$. In this proof $C$ will denote some generic constant which may depend on $C_1, a^2, b^2, c^2$ but is independent of $\eps$ (and $C_2$ and $P$ which will appear below). 

For $P \in Y$, define an operator $K_{\eps,P}: X \rightarrow X$ by
\[
K_{\eps,P}(\psi) = L_{\parallel}^{-1}(A[\psi] + \eps^2B_\eps [\psi,P]),
\]
where $L_{\parallel}^{-1}$ is given in Definition~\ref{def:Lpar}, and $A$ and $B_\eps$ are the operators appearing on the right hand side of \eqref{Eq:psiEq}.

\medskip\noindent\underline{Proof of (i):} It suffices to show that, for sufficiently large $C_2$ and all $P \in \mathcal{U} := \mathcal{U}_{\eps,C_1,C_2}$, $K_{\eps,P}$ is a contraction on the set $\mathcal{O} =\mathcal{O}_{C_2} := \{\psi \in X: \|\psi\|_{H^2(D)} \leq \frac{1}{C_2}\}$. 
 
Observe that, in view of \eqref{Eq:11V19-UeCCInter} and Poincar\'e's inequality in $H^1_0(D,\mcS_0)$, one has for all sufficiently large $C_2$ that 
\[
\eps\|P\|_{L^4(D)} \leq  C\eps\| P\|_{H^1(D)} \leq \frac{C}{C_2^{1/2}} < 1 \text{ for all } P \in  \mathcal{U}.
\]
Estimates \eqref{est:Aest} and \eqref{est:Best1} imply, for $\psi,\tilde\psi \in \mathcal{O}$ and $P \in \mathcal{U}$,
\begin{align*}
\|A[\psi] - A[\tilde\psi]\|_{L^2(D)}
	&\leq \frac{C}{C_2}\|\psi - \tilde\psi\|_{H^2(D)},\\
\|B_\eps[\psi,P] - B_\eps[\tilde \psi, P]\|_{L^2(D)}
	&\leq \frac{C\eps^{-2}}{C_2}\|\psi - \tilde \psi\|_{H^2(D)}.
\end{align*}
Therefore, by Lemma \ref{Lem:LINorm}, we have for $\psi,\tilde\psi \in \mathcal{O}$ and $P \in \mathcal{U}$ that
\[
\|K_{\eps,P}(\psi) - K_{\eps,P}(\tilde\psi)\|_{H^2(D)} \leq \frac{C}{C_2}\,\|\psi - \tilde\psi\|_{H^2(D)}.
\]
Also, by  \eqref{est:Aest0}, \eqref{est:Best0} and Lemma \ref{Lem:LINorm},
\begin{equation}
\|K_{\eps,P}(0)\|_{H^2(D)} 
	\leq C\eps^2\|B_\eps[0,P]\|_{L^2(D)} 
		\leq C \eps^2\|P\|_{H^1(D)} \leq 
		 \frac{C\eps}{C_2^{1/2}} \text{ for all } P \in \mathcal{U}.
	\label{Eq:10V19-KeP0}
\end{equation}
From the above two estimates, we deduce that there exist a large constant $C_2>1$ and a small constant $\eps_0 > 0$ such that, for every $\eps \in (0, \eps_0)$ and for every $P \in \mathcal{U}$, $K_{\eps,P}$ is a contraction from $\mathcal{O}$ into $\mathcal{O}$ and so has a unique fixed point $\psi_\eps(P) \in \mathcal{O}$.

\medskip\noindent\underline{Proof of (ii) and (iii):}  We now fix $C_2$ so that $\psi_\eps$ is defined on $\mathcal{U}$ as above and $K_{\eps,P}$ is a contraction from $\mathcal{O}$ into itself.

It follows from \eqref{Eq:10V19-KeP0} and Lemma \ref{Lem:ContractProperty} (see below) that the unique fixed point $\psi_\eps(P) \in \mathcal{O}$ of $K_{\eps,P}$ satisfies
$$
\|\psi_\eps(P)\|_{H^2(D)} \leq C \eps^2\|P\|_{H^1(D)},
$$ 
which proves (ii).

Next, Lemma \ref{Lem:LINorm} and estimate \eqref{est:Best2} imply that 
\begin{align*}
&\|K_{\eps,P}(\psi) - K_{\eps,\tilde P}(\psi)\|_{H^2(D)} \\
	&\qquad\leq C\eps^2\|\psi\|_{H^2(D)}\big\{\|P -\tilde P\|_{H^2(D)} +  (\|P\|_{H^2(D)} + \|\tilde P\|_{H^2(D)})\|P -\tilde P\|_{L^2(D)}\big\}\\
		&\qquad\qquad + C\eps^2\|P - \tilde P\|_{H^1(D)}
		\text{ for all } \psi \in \mathcal{O} \text{ and } P, \tilde P \in \mathcal{U}.
\end{align*}
Taking $\psi = \psi_\eps(P)$ and using (ii), we find that
\begin{align*}
&\|\psi_\eps(P) - K_{\eps,\tilde P}(\psi_\eps(P))\|_{H^2(D)} \\
	&\qquad\leq C\eps^4\|P\|_{H^1(D)}\big\{\|P -\tilde P\|_{H^2(D)} +  (\|P\|_{H^2(D)} + \|\tilde P\|_{H^2(D)})\|P -\tilde P\|_{L^2(D)}\big\}\\
		&\qquad\qquad + C\eps^2\|P - \tilde P\|_{H^1(D)}\\
	&\qquad \leq C\eps\|P - \tilde P\|_\eps
	\text{ for all } P, \tilde P \in \mathcal{U}.
\end{align*}
Applying Lemma \ref{Lem:ContractProperty} (see below) to $K_{\eps,\tilde P}$ and $b = \psi_\eps(P)$, we obtain
\begin{align*}
\|\psi_\eps(P) - \psi_\eps(\tilde P)\|_{H^2(D)} \leq  C\eps\|P - \tilde P\|_\eps \text{ for all } P, \tilde P \in \mathcal{U}.
\end{align*}
This proves (iii) and completes the proof.
\end{proof}

We used the following simple lemma whose proof is omitted.

\begin{lemma}\label{Lem:ContractProperty}
If $(M,d)$ is a complete metric space and $K: M \rightarrow M$ is a $\lambda$-contraction ($0\leq \lambda < 1$) with a fixed point $a \in M$, then $d(a, b) \leq \frac{1}{1 - \lambda} d(K(b),b)$ for any $b \in M$.
\end{lemma}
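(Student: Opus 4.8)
\textbf{Proof plan for Lemma \ref{Lem:ContractProperty}.}
The plan is to exploit the contraction inequality for $K$ together with the triangle inequality. First I would write, for an arbitrary $b \in M$,
\[
d(a,b) \leq d(a, K(b)) + d(K(b), b).
\]
Since $a$ is a fixed point of $K$, we have $a = K(a)$, so $d(a, K(b)) = d(K(a), K(b)) \leq \lambda\, d(a,b)$ because $K$ is a $\lambda$-contraction. Substituting this into the previous inequality gives
\[
d(a,b) \leq \lambda\, d(a,b) + d(K(b), b).
\]
Since $0 \leq \lambda < 1$, we may rearrange to isolate $d(a,b)$, obtaining $(1 - \lambda)\, d(a,b) \leq d(K(b), b)$, and dividing by $1 - \lambda > 0$ yields the claimed bound $d(a,b) \leq \frac{1}{1-\lambda} d(K(b), b)$.

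There is essentially no obstacle here: the only subtlety worth a line of comment is that $d(a,b)$ is finite (which is automatic since $d$ is a genuine metric, so all distances are finite real numbers), ensuring the rearrangement step is legitimate. The existence and uniqueness of the fixed point $a$ is not needed for this lemma — it is given as a hypothesis — so the argument is purely the three lines above. This is exactly why the authors state that the proof is omitted; I would likewise only include the short computation if a referee requested it.
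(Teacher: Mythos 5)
Your argument is correct and is exactly the elementary triangle-inequality-plus-contraction computation the authors have in mind (the paper omits the proof precisely because it is this short). Nothing to add.
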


\subsection{Uniqueness of critical points in a neighborhood of $Q_*$}\label{ssec:unique}

In this subsection we show the uniqueness of critical points of $\mcF_\eps$ in a small neighbourhood of $Q_*\in \{Q_*^\pm\}$ given in \eqref{Eq:Q*Def}. In particular, we prove the following version of the informal statement \eqref{dagger} formulated in Subsection~\ref{SSec:Heu}:
\begin{proposition}\label{Prop:ResUniq}
For every $C_1 > 0$, there exist large $C_2 > 1$ and small $\eps_0 > 0$ such that, for all $0 < \eps \leq \eps_0$, $\mcF_\eps$ has at most one critical point $Q_\eps$, represented by $(\psi_\eps, P_\eps)$ as in Lemma~\ref{lemma:EL}, with $\|\psi_\eps\|_{H^2(D)} \leq \frac{1}{C_2}$, $\eps^2 \|P_\eps\|_{H^2(D)} \leq \frac{1}{C_2}$, and $\|P_\eps\|_{L^2(D)} \leq C_1$.
\end{proposition}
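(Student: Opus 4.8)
\medskip
\noindent\emph{Proof strategy.} The plan is to use Proposition~\ref{Prop:psiofP} to eliminate the variable $\psi$ and to rewrite the coupled system \eqref{Eq:psiEq}--\eqref{Eq:PEq} as a single fixed-point equation for the transversal component $P$ in $\mathcal U_{\eps,C_1,C_2}$ equipped with the distance $\|P-\tilde P\|_\eps$; uniqueness then follows from a contraction estimate, since a contraction has at most one fixed point in the set on which it contracts.

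Fix $C_1>0$. I would first choose $C_2$ large (depending only on $a^2,b^2,c^2,k$) so that Lemma~\ref{lem:TubNbhd}, Lemma~\ref{lemma:EL} and Proposition~\ref{Prop:psiofP} all apply whenever $\|\psi_\eps\|_{H^2(D)}\le\frac1{C_2}$ and $\eps^2\|P_\eps\|_{H^2(D)}\le\frac1{C_2}$ (note these bounds force $\|Q_\eps-Q_*\|_{H^2(D)}\le C/C_2$, since $\psi\mapsto Q_\sharp$ is smooth and $Q_\eps-Q_\sharp=\eps^2P_\eps$). For such a critical point $Q_\eps$, the hypothesis $\|P_\eps\|_{L^2(D)}\le C_1$ gives $P_\eps\in\mathcal U_{\eps,C_1,C_2}$, and, since $\psi_\eps$ solves \eqref{Eq:psiEq} with $P=P_\eps$ and $\|\psi_\eps\|_{H^2(D)}\le\frac1{C_2}$, the uniqueness part of Proposition~\ref{Prop:psiofP}(i) yields $\psi_\eps=\psi_\eps(P_\eps)$. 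Substituting this into \eqref{Eq:PEq} and inverting $L_{\eps,\perp}$ (Definition~\ref{def:Lperp}, which already accounts for the Lagrange multiplier $F_\eps$), $P_\eps$ is seen to be a fixed point of
\[
\Phi_\eps(P):=L_{\eps,\perp}^{-1}\Big(s_+\,\Delta(n_*\otimes n_*)+C_\eps[\psi_\eps(P),P]-\tfrac13\tr\big(C_\eps[\psi_\eps(P),P]\big)I_3\Big),\qquad P\in\mathcal U_{\eps,C_1,C_2},
\]
which is well defined since $n_*$ is smooth on $\bar D$ and $C_\eps[\psi_\eps(P),P]\in L^2(D,\mcS_0)$ by \eqref{est:Cest} (with $(\tilde\psi,\tilde P)=(0,0)$). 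Since any two critical points satisfying the hypotheses produce fixed points of $\Phi_\eps$ inside $\mathcal U_{\eps,C_1,C_2}$, it suffices to show that $\Phi_\eps$ is a $\|\cdot\|_\eps$-contraction on $\mathcal U_{\eps,C_1,C_2}$ once $\eps$ is small enough.

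For the contraction estimate I would take $P,\tilde P\in\mathcal U_{\eps,C_1,C_2}$ and write $\psi=\psi_\eps(P)$, $\tilde\psi=\psi_\eps(\tilde P)$. Lemma~\ref{Lem:LPInverseEst} (the $\eps$-uniform bound $\|L_{\eps,\perp}^{-1}q\|_\eps\le C\|q\|_{L^2(D)}$) together with boundedness of $A\mapsto\tr(A)I_3$ reduces matters to bounding $\|C_\eps[\psi,P]-C_\eps[\tilde\psi,\tilde P]\|_{L^2(D)}$, for which I would invoke \eqref{est:Cest}. The inputs needed are: $\|\psi\|_{H^2(D)},\|\tilde\psi\|_{H^2(D)}=O(\eps)$ (from Proposition~\ref{Prop:psiofP}(ii) and \eqref{Eq:11V19-UeCCInter}); $\|\psi-\tilde\psi\|_{H^2(D)}\le C\eps\|P-\tilde P\|_\eps$ (Proposition~\ref{Prop:psiofP}(iii)); the a priori bounds $\|P\|_{L^2(D)},\|\tilde P\|_{L^2(D)}\le C_1$ and $\eps^2\|P\|_{H^2(D)},\eps^2\|\tilde P\|_{H^2(D)}\le\frac1{C_2}$; the elementary inequalities $\|P-\tilde P\|_{L^2(D)}\le\|P-\tilde P\|_\eps$ and $\|P-\tilde P\|_{H^1(D)}\le C\eps^{-1}\|P-\tilde P\|_\eps$; and, via the two-dimensional Ladyzhenskaya inequality together with \eqref{Eq:11V19-UeCCInter}, $\eps^2\|P\|_{L^4(D)},\eps^2\|\tilde P\|_{L^4(D)}\le C\eps^{3/2}$. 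Inserting these into the four terms of \eqref{est:Cest}, each of them is $\le C(C_1,C_2)\,\eps^{1/2}\,\|P-\tilde P\|_\eps$: every factor $\eps^{-1}$ incurred when replacing an $H^1$-norm of $P-\tilde P$ by $\|P-\tilde P\|_\eps$ is overcome by the $\eps^2$ prefactors in $C_\eps$ and by the $O(\eps)$ Lipschitz constant of $P\mapsto\psi_\eps(P)$. Hence $\|\Phi_\eps(P)-\Phi_\eps(\tilde P)\|_\eps\le C(C_1,C_2)\,\eps^{1/2}\,\|P-\tilde P\|_\eps$, and taking $\eps_0$ so that $C(C_1,C_2)\,\eps_0^{1/2}<1$ makes $\Phi_\eps$ a contraction for all $\eps\in(0,\eps_0]$, which concludes the argument.

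The step I expect to be most delicate is precisely this $\eps$-bookkeeping: the powers of $\eps$ must be arranged so that the contraction constant tends to $0$ with $\eps$ rather than merely staying bounded. This is exactly the reason for introducing the $\eps$-weighted norm $\|\cdot\|_\eps$ and the constraint set $\mathcal U_{\eps,C_1,C_2}$, and the essential ingredient that makes the scheme close is the $\eps$-independent constant in Lemma~\ref{Lem:LPInverseEst}; granting Propositions~\ref{Prop:psiofP} and~\ref{prop:ABCest}, the rest is routine.
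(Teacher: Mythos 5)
Your proposal is correct and follows essentially the same route as the paper's proof: the paper defines the very same map (there called $K_{\eps,\perp}(P)=L_{\eps,\perp}^{-1}\big(s_+\Delta(n_*\otimes n_*)+\mathring{C_\eps}[\psi_\eps(P),P]\big)$), uses Lemma~\ref{Lem:LPInverseEst}, estimate \eqref{est:Cest}, Proposition~\ref{Prop:psiofP}(ii)--(iii) and Ladyzhenskaya's inequality to get the contraction constant $C\eps^{1/2}$ in the $\|\cdot\|_\eps$-norm, and concludes uniqueness of the fixed point in $\mathcal U_{\eps,C_1,C_2}$. Your only addition is to spell out, via the uniqueness clause of Proposition~\ref{Prop:psiofP}(i), why any critical point satisfying the stated bounds has $\psi_\eps=\psi_\eps(P_\eps)$ and hence yields a fixed point of the map, a step the paper leaves implicit.
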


\begin{remark}
In the proof of Proposition \ref{Prop:ResUniq}, the exact form of $n_*$ is used only to have the tubular neighborhood representation (Lemma \ref{lem:TubNbhd}) and the stability inequality (Lemma \ref{Lem:StrStab}). Therefore, provided these are true, the statement of Proposition~\ref{Prop:ResUniq} will hold 
for more general domains and boundary conditions.
\end{remark}

\begin{proof} 
Let $X$ and $Y$ be defined by \eqref{Eq:XSpaceDef} and \eqref{Eq:YSpaceDef}. Let $L_{\eps,\perp}^{-1}$ be as in Definition~\ref{def:Lperp} and
\[
\theta_\eps := L_{\eps,\perp}^{-1}(s_+ \Delta(n_* \otimes n_*) ) \in Y.
\]
By Lemma \ref{Lem:LPInverseEst}, as $n_*$ is smooth, we have for every $\eps\in (0,1)$:
$$
\|\theta_\eps\|_\eps \leq C_0
$$ 
for some constant $C_0$ independent of $\eps$, and where $\|\cdot\|_\eps$ is as defined in \eqref{Eq:H2epsNorm}.

Fix some $C_1 > 0$ and let $\eps_0\in (0,1)$ and $C_2$ be as in Proposition \ref{Prop:psiofP}. By shrinking $\eps_0$ if necessary, we have for $0 < \eps \leq \eps_0$ that the solution $\psi_\eps(P)$ to \eqref{Eq:psiEq} is defined for all given $P \in \mathcal{U} := \mathcal{U}_{\eps, C_1, C_2}$ (see \eqref{Eq:UeCCDef}) and 
\begin{equation}
\|\psi_\eps(P)\|_{H^2(D)} \leq C\eps^2\,\|P\|_{H^1(D)} \leq C\eps C_2^{-1/2} < 1,
	\label{Eq:11V19-2dot}
\end{equation}
where we have used \eqref{Eq:11V19-UeCCInter}. Here and below, $C$ denotes some constant which may depend on $C_1$, $C_2$, $a^2, b^2, c^2$ but is always independent of $\eps$. For $P \in \mathcal{U} $ we define
\[
K_{\eps,\perp}(P) := L_{\eps,\perp}^{-1}\Big(s_+ \Delta(n_* \otimes n_*) + \mathring{C_\eps}[\psi_\eps(P),P]\Big) = \theta_\eps + L_{\eps,\perp}^{-1}(\mathring{C_\eps}[\psi_\eps(P),P]),
\]
where $\mathring{C_\eps}[\psi,P] = C_\eps[\psi,P] - \frac{1}{3}\tr(C_\eps[\psi,P]) I_3$  and $C_\eps$ is the operator appearing on the right hand side of \eqref{Eq:PEq}. It should be clear that if $P$ is a fixed point of $K_{\eps,\perp}$, then $(\psi_\eps(P),P)$ solves \eqref{Eq:psiEq}-\eqref{Eq:psiPCons}, and so the map $Q_\eps$ corresponding to $(\psi_\eps(P),P)$ in the representation Lemma \ref{lem:TubNbhd} is a critical point of $\mcF_\eps$. Therefore, to reach the conclusion, it suffices to show that for all sufficiently small $\eps$, the map $K_{\eps,\perp}$ has at most one fixed point in $\mathcal{U}$. In fact, we show that, for all small $\eps$, $K_{\eps,\perp}$ is contractive on $\mathcal{U}$ with respect to the norm $\|\cdot\|_\eps$.

In the following, we will use Ladyzhenskaya's inequality in two dimensions:
\[
\|\varphi\|_{L^4(D)} \leq C\|\varphi\|_{L^2(D)}^{1/2}\|\nabla \varphi\|_{L^2(D)}^{1/2} \text{ for all } \varphi \in H_0^1(D).
\]
In particular, it holds that
\begin{equation}
\|P\|_{L^4(D)} \leq C\|\nabla P\|_{L^2(D)}^{1/2} \leq \frac{C}{\eps^{1/2}} \text{ for all } P \in \mathcal{U}.
	\label{Eq:10V19-LadyI}
\end{equation}

Using the estimate \eqref{est:Cest} and inequality \eqref{Eq:10V19-LadyI}, we have
\begin{align*}
\|C_\eps[\psi,P] - C_\eps[\tilde \psi, \tilde P]\|_{L^2(D)}
	&\leq 
		C\,\|\psi - \tilde \psi\|_{H^2(D)}
			+C(\|\psi\|_{H^2(D)} +\|\tilde\psi\|_{H^2(D)})\|P - \tilde P\|_{L^2(D)} 
			\\
			&\qquad \qquad  + C\eps^2(\|\nabla P\|_{L^2(D)} + \|\nabla\tilde P\|_{L^2(D)})^{1/2}\|P - \tilde P\|_{H^1(D)}\\
	&\leq 
		C(\|\psi\|_{H^2(D)} +\|\tilde\psi\|_{H^2(D)})  \|P - \tilde P\|_{L^2(D)}
			+ C\,\|\psi - \tilde \psi\|_{H^2(D)}
			\\
			&\qquad \qquad  + C\eps^{\frac{3}{2}}
			\|P - \tilde P\|_{H^1(D)}
\end{align*}
for all $P, \tilde P \in \mathcal{U}$ and $\psi,\tilde\psi \in X$ with $\|\psi\|_{H^2(D)}, \|\tilde \psi\|_{H^2(D)} \leq 1$. Thus, by Proposition \ref{Prop:psiofP}(ii) and (iii), we get
\begin{equation}
\|C_\eps[\psi_\eps(P),P] - C_\eps[\psi_\eps(\tilde P),\tilde P]\|_{L^2(D)}
	\leq C\eps^{1/2}\|P - \tilde P\|_\eps
	 \text{ for all } P, \tilde P \in \mathcal{U}.
		\label{Eq:Ceps1}
\end{equation}
In view of Lemma \ref{Lem:LPInverseEst}, it follows that
\[
\|K_{\eps,\perp}(P) - K_{\eps,\perp}(\tilde P)\|_{\eps}
	\leq C\eps^{\frac{1}{2}}\|P - \tilde P\|_{\eps} \text{ for all } P, \tilde P \in \mathcal{U}.
\]
This implies that, for all sufficiently small $\eps$, $K_{\eps,\perp}$ has at most one fixed point in $\mathcal{U}$, which concludes the proof.
\end{proof}

\subsection{Proof of Theorem \ref{thm:MinSymmetryRes}}\label{SSec:ProofUniq}

\begin{proof}
For $\eps > 0$, let $\mcC_\eps \subset H_{Q_b}^1(D,\mcS_0)$ denote the set of minimizers of $\mcF_\eps$ in $H_{Q_b}^1(D,\mcS_0)$. Note that if $Q_\eps \in \mcC_\eps$, then $JQ_\eps J \in \mcC_\eps$ (where $J$ is given in \eqref{def:J}). 

Let $n_*^\pm$ be given by \eqref{Eq:n*Def} and $Q_*^\pm = s_+(n_*^\pm \otimes n_*^\pm - \frac{1}{3}I_3)$. It is well known that (see e.g. \cite{BBH, Chen-Lin-CAG}), if $\eps_m \rightarrow 0$ and $Q_{\eps_m} \in \mcC_{\eps_m}$, then $Q_{\eps_m}$ converges along a subsequence in $H^1(D,\mcS_0)$ to either $Q_* := Q_*^+$ or $Q_*^- = JQ_*^+J$. Thus, with $d = \frac{1}{3} \|Q_*^+ - Q_*^-\|_{H^1(D,\mcS_0)}$, it holds for all small $\eps>0$ that
\[
\mcC_\eps = \mcC_\eps^+ \cup \mcC_\eps^- \text{ where } \mcC_\eps^\pm := \mcC_\eps \cap \Big\{Q \in H_{Q_b}^1(D,\mcS_0): \|Q_*^\pm - Q\|_{H^1(D,\mcS_0)} < d\Big\}.
\]
It should be clear that $\mcC_\eps^\pm = J\mcC_\eps^\mp J$. To conclude, it is enough to show that, for all sufficiently small $\eps$, $\mcC_\eps^+$ consists of a single map which is $O(2)$-symmetric.

\medskip
\noindent {\it Step 1.} We prove that 
\begin{equation}
\sup_{Q \in\mcC_\eps^\pm} \|Q - Q_*^\pm\|_{H^2(D,\mcS_0)} \rightarrow 0 \text{ as } \eps \rightarrow 0.
	\label{Eq:10V19-distH2}
\end{equation}
In fact, it suffices to show that $\sup_{Q \in\mcC_\eps^+} \|Q - Q_*^+\|_{H^2(D,\mcS_0)} \rightarrow 0$ when $\eps \rightarrow 0$. Set $Q_*:=Q_*^+$. Arguing indirectly, suppose that there exist $\eps_m \rightarrow 0$ and $Q_{\eps_m} \in \mcC_{\eps_m}^+$ such that $\|Q_{\eps_m} - Q_*\|_{H^2(D,\mcS_0)} \geq \frac{1}{C} > 0$.
By \cite[Theorem 1]{NZ13-CVPDE}, $Q_{\eps_m}$ converges strongly to $Q_*$ in $C^{1,\sigma}(\bar D)$ for any $\sigma \in (0,1)$ and in $C^2_{\rm loc}(D)$ (note that in the cited paper the results are in $3D$ domains but one can easily check that those convergences also hold in $2D$ domains). Furthermore, by \cite[Corollary 2]{NZ13-CVPDE}, $\Delta  Q_{\eps_m}$ is bounded in $L^\infty(D)$. By Lebesgue's dominated convergence theorem
\[
\lim_{\eps \rightarrow 0} \int_D |\Delta Q_{\eps_m}|^2\,dx = \int_D |\Delta Q_*|^2\,dx,
\]
and so $\Delta Q_{\eps_m}$ converges to $\Delta Q_*$ in $L^2(D, \mcS_0)$. By elliptic estimates, we conclude that $Q_{\eps_m}$ converges to $Q_*$ in $H^2(D,\mcS_0)$, which gives a contradiction. We have thus established \eqref{Eq:10V19-distH2}.

In view of \eqref{Eq:10V19-distH2} and Lemma \ref{lem:TubNbhd}, for all sufficiently small $\eps$ and $Q_\eps \in \mcC_\eps^+$, we can represent
$$
Q_\eps= \underbrace{s_+\Big(\frac{n_* + \psi_\eps}{|n_* + \psi_\eps|} \otimes \frac{n_* + \psi_\eps}{|n_* + \psi_\eps|} - \frac{1}{3}I_3\Big)}_{=Q_{\eps,\sharp}}+\eps^2 P_\eps,
$$
where $\psi_\eps \cdot n_* = 0$ and $P_\eps \in (T_{Q_*}\mcS_*)^\perp$. We let $\widetilde\mcC_\eps^+$ denote the set of $(\psi,P)$ representing elements of $\mcC_\eps^+$ as above:
\[
\widetilde\mcC_\eps^+ = \Big\{(\psi,P): s_+\Big(\frac{n_* + \psi}{|n_* + \psi|} \otimes \frac{n_* + \psi}{|n_* + \psi|} - \frac{1}{3}I_3\Big)+\eps^2 P \in \mcC_\eps^+
	\Big\}.
\]
By \eqref{Eq:10V19-distH2} and Lemma \ref{lem:TubNbhd},
\begin{equation}
\sup_{(\psi,P) \in \widetilde \mcC_\eps^+} \Big[\|\psi\|_{H^2(D,\RR^3)} +  \eps^2 \|P\|_{H^2(D,\mcS_0)}\Big] \rightarrow 0 \text{ as } \eps \rightarrow 0.
 	\label{Eq:ReqRoC}
\end{equation}

\medskip
\noindent {\it Step 2.} In view of \eqref{Eq:ReqRoC} and Proposition \ref{Prop:ResUniq}, in order to prove that $\mcC_\eps^+$ consists of a single point for all sufficiently small $\eps$, it suffices to show that there exist $\eps_1 > 0$ and $C_1 > 0$ such that, for all $\eps \in (0,\eps_1)$,
\be
\sup_{(\psi,P) \in \widetilde \mcC_\eps^+}\| P\|_{L^2(D,\mcS_0)} \leq C_1.
	\label{bddPeps}
\ee

We recall some results from \cite{NZ13-CVPDE}. Let $Q_\eps \in \mcC_\eps^+$ and $(\psi_\eps, P_\eps) \in \widetilde \mcC_\eps^+$ be its corresponding representation as above. We consider the tensor:
\[
X_\eps := \frac{1}{\eps^2}[Q_\eps^2 - \frac{1}{3}s_+\,Q_\eps - \frac{2}{9}s_+^2 I_3].
\]
(The polynomial on the right hand side is a multiple of the minimal polynomial of matrices belonging to the limit manifold $\mcS_*$.) By \cite[Proposition 4]{NZ13-CVPDE}, 
\[
X_\eps \text{ is bounded in } C^0(\bar D).
\]
  As $Q_{\eps,\sharp}\in\mcS_*$, we have $Q_{\eps,\sharp}^2 - \frac{1}{3}s_+\,Q_{\eps,\sharp} - \frac{2}{9}s_+^2 I_3 = 0$ and thus
\begin{equation}
X_\eps = Q_{\eps,\sharp} P_\eps + P_\eps\,Q_{\eps,\sharp} - \frac{1}{3}s_+\,P_\eps + \eps^2 P_\eps^2 .
	\label{Eq:30VII19-R1}
\end{equation}
Let $End(\mcS_0)$ be the set of linear endomorphisms of $\mcS_0$ and define $\mu_\eps: D \rightarrow End(\mcS_0)$ by
\[
\mu_\eps(x)(M) = (Q_{\eps,\sharp}(x) - Q_*(x)) M + M\,(Q_{\eps,\sharp}(x) - Q_*(x)) + \eps^2 P_\eps(x) M
\]
 for all $x \in \bar D$ and $M \in \mcS_0$. Then \eqref{Eq:30VII19-R1} is equivalent to
\[
X_\eps = Q_*\,P_\eps + P_\eps\,Q_*  - \frac{1}{3}s_+\,P_\eps + \mu_\eps  P_\eps
\]
where $\mu_\eps  P_\eps$ stands for the map $x \mapsto \mu_\eps(x)(P_\eps(x))$. By definition, $P_\eps \in (T_{Q_*}\mcS_*)^\perp$ and so by \cite[Lemma 2]{NZ13-CVPDE}, $P_\eps$ commutes with $Q_*$. It follows that 
\[
X_\eps = 2s_+(n_* \otimes n_* - \frac{1}{2}I_3)\,P_\eps + \mu_\eps  P_\eps.
\]
Note that $(n_* \otimes n_* - \frac{1}{2}I_3)$ is invertible when considered as an endomorphism of $\mcS_0$ and that $\lim_{\eps \rightarrow 0} \|\mu_\eps\|_{C^0(\bar D)} = 0$ (in view of \eqref{Eq:ReqRoC}). Thus, as $X_\eps$ is bounded in $C^0(\bar D)$, we have that $P_\eps$ is also bounded in $C^0(\bar D)$, and in particular in $L^2(D)$. The assertion in \eqref{bddPeps} is established. By Proposition \ref{Prop:ResUniq}, we hence have for all sufficiently small $\eps$ that $\mcC_\eps^+$ consists of a single element and $\mcC_\eps$ consists of exactly two distinct $\Z_2$-conjugate elements.

\medskip
\noindent{\it Step 3:} Let $Q_\eps$ denote the unique element of $\mcC_\eps^+$. We show that $Q_\eps$ is $O(2)$-symmetric. Recall that we denoted in \eqref{def:Ars} by $\mcA^{rs}$ the set of $O(2)$-symmetric maps in $H^1(D,\mcS_0)$. Let $\mcC_\eps^{rs}$ denote the set of minimizers of $\mcF_\eps|_{\mcA^{rs}}$. The same argument as above shows that, for all small $\eps$, $\mcC_\eps^{rs} = \{Q_\eps^{rs,+}, Q_\eps^{rs,-} = JQ_\eps^{rs,+}J\}$, and in the representation $Q_\eps^{rs,+} \approx (\psi_\eps^{rs}, P_\eps^{rs})$, it holds that
\begin{align*}
\lim_{\eps \rightarrow 0} \Big[\|\psi_\eps^{rs}\|_{H^2(D,\RR^3)} +  \eps^2 \|P_\eps^{rs}\|_{H^2(D,\mcS_0)}\Big] = 0  \text{ and } 
\| P_\eps^{rs}\|_{L^2(D,\mcS_0)} \leq C_1'
\end{align*}
for some constant $C_1'$ independent of $\eps$. Another application of Proposition \ref{Prop:ResUniq} thus yields $ Q_\eps \equiv Q_\eps^{rs,+}$ for all small $\eps$  and so $Q_\eps$ is $O(2)$-symmetric. 

Finally, note that $Q_\eps$ and $JQ_\eps J$ are distinct as $\mcC_\eps^+\cap \mcC_\eps^-=\emptyset$, so they are not $\Z_2$-symmetric. This completes the proof.
\end{proof}

\subsection{Proof of Theorem \ref{thm:MinSymmetry}}\label{SSec:ProofUniqThm1.5}

\begin{proof} Using Theorem \ref{thm:MinSymmetryRes} and a simple scaling argument, we find $R_0 = R_0(a^2, b^2, c^2, k) > 0$ such that for all $R > R_0$, there exist exactly two global minimizers $Q^\pm$ of $\mcF[\cdot;B_R]$ subjected to the boundary condition \eqref{BC1} and these minimizers are $k$-fold $O(2)$-symmetric and are $\Z_2$-conjugate to each other. By Proposition \ref{Prop:okrChar}, we can express $Q^\pm$ in the form
\begin{align*}
Q^\pm (x) = w_0(|x|) E_0 + w_1(|x|) E_1 \pm w_3(|x|)  E_3 \quad \textrm{for every } x\in B_R.
\end{align*}
It is clear that $(w_0, w_1, 0, \pm w_3, 0)$ satisfies \eqref{Eq:w0}-\eqref{Eq:w4}.

Now, note that in view of formula \eqref{Eq:LdGW}, $\mcF[Q^\pm;B_R] = \mcF[w_0 E_0 + w_1 E_1 \pm |w_3|  E_3; B_R]$. Hence, $w_0 E_0 + w_1 E_1 \pm |w_3|  E_3$ are also minimizers of $\mcF[\cdot;B_R]$ satisfying \eqref{BC1}. By the above uniqueness up to $\Z_2$-conjugation, we may assume that $w_3 \geq 0$ in $B_R$. Also, as $Q^+ \neq Q^-$, $w_3 \not\equiv 0$. Recalling equation \eqref{Eq:w3} and noting that $w_2 = w_4 = 0$, we can apply the strong maximum principle to conclude that $w_3 > 0$. The proof is complete.
\end{proof}

\section{Mountain pass critical points}
\label{sec:multiplesol}

In this section, we give the proof of Theorem \ref{thm:FiveSol}, which asserts the existence of at least five $O(2)$-symmetric critical points satisfying the boundary condition \eqref{BC1} for $\mcFRdom := \mcF[\cdot; B_R]$ for all large enough $R$.

We denote by  $\mcA_R^{rs}$ and $\mcA_R^{str}$ the sets of $k$-fold $O(2)$-symmetric and $\Z_2 \times O(2)$-symmetric maps, respectively, satisfying the boundary conditions \eqref{BC1}:
\begin{align*}
\mcA_R^{rs}
	& = \Big\{Q \in H_{Q_b}^1(B_R,\mcS_0): \text{$Q$ is $O(2)$-symmetric}\Big\},\\
\mcA_R^{str}
	& = \Big\{Q \in H_{Q_b}^1(B_R,\mcS_0): \text{$Q$ is  $\Z_2 \times O(2)$-symmetric}\Big\}.
\end{align*}

By the characterization of symmetric maps (see Propositions \ref{Prop:okrChar} and \ref{prop:Z2O2}), we can express the sets $\mcA_R^{rs}$ and $\mcA_R^{str}$ in terms of the basis components defined in Section \ref{Sec:WkRSS} as follows:
\begin{align*}
\mcA_R^{rs}
	& = \Big\{
Q(x) =w_0(|x|)E_0+w_1(|x|)E_1+w_3(|x|)E_3\, :\\ 
   &\qquad w_0 \in H^1((0,R);r\,dr), w_1,  w_3\in H^1((0,R);r\,dr) \cap L^2((0,R);\frac{1}{r}\,dr), \\
   	&\qquad w_0(R) = -\frac{s_+}{\sqrt{6}}, w_1(R) = \frac{s_+}{\sqrt{2}}, w_3(R) = 0\Big\},\\
\mcA_R^{str}
	& = \Big\{Q(x) =w_0(|x|)E_0+w_1(|x|)E_1\, :\\ 
   &\qquad w_0 \in H^1((0,R);r\,dr), w_1\in H^1((0,R);r\,dr) \cap L^2((0,R);\frac{1}{r}\,dr), \\
   &\qquad w_0(R) = -\frac{s_+}{\sqrt{6}}, w_1(R) = \frac{s_+}{\sqrt{2}}\Big\}.
\end{align*}
A direct computation shows that critical points of $\mcFRdom$ in $\mcA_R^{rs}$ or $\mcA_R^{str}$ are in fact critical points of $\mcFRdom$ in $H_{Q_b}^1(B_R, \mcS_0)$ (cf. Remark \ref{Rem:ODESys}). To prove Theorem \ref{thm:FiveSol}, we use the fact that $\mcFRdom$ has two global minimizers in $\mcA_R^{rs}$ (due to Theorem \ref{thm:MinSymmetry}) and the mountain pass theorem. An energetic consideration is needed to show that the obtained mountain pass critical point does not coincide with critical points of $\mcFRdom$ in $\mcA_R^{str}$.

We start with an estimate for the minimal energy of $\mcFRdom$ in $\mcA_R^{str}$.

\begin{lemma}\label{Lem:StrongLogDiv}
There exists some $C > 0$ depending only on $a^2, b^2$ and $c^2$ such that, for all $\delta \in (0,1)$, $k \in \ZZ \setminus \{0\}$ and $R > \max(1,\frac{Cek^2}{\delta^2})$, there holds
\begin{equation}
\frac{\pi s_+^2 k^2}{2} \,\ln R  + C k^2  \geq \alpha_R := \min_{\mcA_R^{str}}\mcFRdom \geq  \frac{\pi s_+^2 k^2}{2(1 + 2\delta)^2} \,\left(\ln \frac{\delta^2 R}{Ck^2} - \ln \ln \frac{\delta^2 R}{Ck^2}\right).
	\label{Eq:alphaRLB}
\end{equation}
As a consequence,
\begin{equation}
\lim_{R \rightarrow \infty} \frac{\alpha_R}{\ln R} = \frac{1}{2}\pi s_+^2 k^2.
	\label{Eq:alphaRAs}
\end{equation}
\end{lemma}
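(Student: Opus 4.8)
The plan is to prove the two-sided bound \eqref{Eq:alphaRLB}, from which \eqref{Eq:alphaRAs} follows immediately by dividing by $\ln R$ and letting $R\to\infty$ (the lower bound gives $\liminf \geq \frac{1}{2}\pi s_+^2 k^2$ since $\delta\in(0,1)$ is arbitrary, and the upper bound gives $\limsup \leq \frac{1}{2}\pi s_+^2 k^2$).

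\textbf{Upper bound.} For the upper bound $\alpha_R \leq \frac{\pi s_+^2 k^2}{2}\ln R + Ck^2$, I would construct an explicit competitor in $\mcA_R^{str}$, i.e.\ a map of the form $w_0(|x|)E_0 + w_1(|x|)E_1$. The natural choice is to take the limit-manifold configuration $n = (\cos\frac{k\varphi}{2},\sin\frac{k\varphi}{2},0)$ (which has the correct degree for even $k$) away from the origin and interpolate to a constant near the origin to control the core energy. Concretely, set $Q(x) = Q_b(x/|x|)$ for $1 \leq |x| \leq R$, rescaled appropriately, and on $\{|x|\leq 1\}$ interpolate radially from $Q_b$ to a fixed point in $\mcS_*$ over a unit-size core, paying $O(1)$ energy there (the $\frac{1}{\eps^2}f_{\rm bulk}$-type term is absent here since we work with $\mcF$ directly, but the gradient term near the core must be handled; actually since we stay in $\mcS_*$ on the annulus and interpolate on the unit disk, the bulk term vanishes on the annulus and is $O(1)$ on the core). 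The Dirichlet energy of the map $x\mapsto Q_b(x/|x|)$ on the annulus $\{1<|x|<R\}$ is exactly $\int_1^R \frac{1}{r^2}\cdot |\partial_\varphi Q_b|^2 \cdot r\,dr\,d\varphi$; since $|\partial_\varphi n|^2 = \frac{k^2}{4}$ and $|\partial_\varphi(n\otimes n)|^2 = \frac{k^2}{2}$ scaled by $s_+^2$, one gets $\frac{1}{2}\cdot 2\pi \cdot \frac{k^2 s_+^2}{2}\cdot \ln R = \frac{\pi k^2 s_+^2}{2}\ln R$. The core contributes $\leq Ck^2$. This gives the upper bound; the dependence $Ck^2$ on the core is because scaling the unit core when the winding is $k/2$ costs energy proportional to $k^2$.

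\textbf{Lower bound.} This is the main obstacle and requires a lower bound on the Dirichlet energy of \emph{any} $\Z_2\times O(2)$-symmetric map satisfying the boundary data. The key point is that a $\Z_2\times O(2)$-symmetric map has the form $Q = w_0(r)E_0 + w_1(r)E_1$ with $w_1(R) = \frac{s_+}{\sqrt 2}$, and crucially $E_1 = \sqrt2(n\otimes n - \frac12 I_2)$ carries topological winding $k$ (recall $\partial_\varphi E_1 = kE_2$). So even though $E_2$ is absent in the map itself, the term $\frac{1}{2r^2}|k w_1|^2$ appears in the energy \eqref{Eq:LdGW} (taking $w_2=0$ there): $\mcFRdom[Q] \geq \int_{B_R}\frac{1}{2r^2}k^2 w_1^2 + \big(-\frac{a^2}{2}+\frac{c^2}{4}|\bw|^2\big)|\bw|^2 - \text{cubic} - f_* + \tfrac12|\partial_r\bw|^2$. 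The strategy is a standard Ginzburg–Landau-type lower bound: on an annulus $\{\rho < r < R\}$ where $\rho$ is chosen of size $\sim Ck^2/\delta^2$ (large enough that the ``core'' near the origin can be discarded), one shows $|\bw|$ must be close to $|s_+|$ (more precisely, $|w_1|$ close to $\frac{s_+}{\sqrt2}$) for \emph{most} radii, using that the bulk potential forces $|\bw|^2$ near $s_+^2$-level set and a dimensional/energy argument to upgrade this to pointwise closeness on a large set of radii outside a bad set of controlled measure. Then on the good annular region, $w_1^2 \geq \frac{s_+^2}{2(1+2\delta)^2}$ (after absorbing the $\delta$-errors), and $\int_\rho^R \frac{k^2 w_1^2}{2r^2}\cdot r\,dr \geq \frac{k^2 s_+^2}{4(1+2\delta)^2}\int_\rho^R \frac{dr}{r} = \frac{k^2 s_+^2}{4(1+2\delta)^2}\ln\frac{R}{\rho}$; multiplying by the $2\pi$ from the $\varphi$-integration gives $\frac{\pi k^2 s_+^2}{2(1+2\delta)^2}\ln\frac{R}{\rho}$, matching \eqref{Eq:alphaRLB} with $\rho \sim Ck^2/\delta^2$ and the $\ln\ln$ correction accounting for the measure of the bad radii (which must be removed from the logarithmic integral, costing at most $\ln\ln(\delta^2 R/Ck^2)$). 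The technical heart is the quantitative argument that $w_1$ stays near $\frac{s_+}{\sqrt2}$ on a near-full-measure set of radii: one uses that the bulk energy $\int f_{\rm bulk} \leq \alpha_R$ together with a Fubini/mean-value argument to find good radii, then the $H^1$ control of $w_1$ (via $\frac12\int|\partial_r w_1|^2 \leq \alpha_R$) plus a Poincaré/interpolation estimate to propagate closeness; the restriction $R > \max(1, Cek^2/\delta^2)$ ensures the relevant logarithms are positive and the argument is non-vacuous. I would also invoke the already-established existence of $\Z_2\times O(2)$-symmetric critical points (the $Q_R^{str}$ from \cite{INSZ_AnnIHP}) only insofar as needed to know the infimum is attained, though for the energy estimate itself one only needs the variational lower bound on the infimum value $\alpha_R$.
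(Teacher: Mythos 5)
Your upper bound is correct and is essentially the paper's argument: an explicit competitor in $\mcA_R^{str}$ that sits at the boundary state for $r\ge 1$ (in components, $\bar w_0\equiv-\frac{s_+}{\sqrt6}$, $\bar w_1=\frac{s_+}{\sqrt2}\min(r,1)$) pays $\frac{\pi s_+^2k^2}{2}\ln R$ through the winding term on $1<r<R$ and $Ck^2$ in the core.

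The lower bound, however, has a genuine gap at its first step. You claim that smallness of the bulk energy forces $w_1$ to be close to $\frac{s_+}{\sqrt2}$ for most radii, off a bad set of measure controlled by $\int h\lesssim\alpha_R$. This is false: the reduced potential $h(w_1,w_0)$ vanishes not only at $(\pm\frac{s_+}{\sqrt2},-\frac{s_+}{\sqrt6})$ but also at $(0,\frac{2s_+}{\sqrt6})$, i.e.\ at the uniaxial state $s_+(e_3\otimes e_3-\frac13 I_3)\in\mcS_*$, which is admissible pointwise in $\mcA_R^{str}$, has $w_1=0$, zero bulk density and zero winding cost. A map sitting at this ``escaped'' state on an arbitrarily large set of radii is invisible to the bulk energy, so no Fubini/mean-value argument on $\int h$ can bound the measure of the set where $w_1$ is far from $\frac{s_+}{\sqrt2}$; also note $|\bw|^2=\frac23 s_+^2$ at both zeros, so closeness of $|\bw|$ to its limit value distinguishes nothing. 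What excludes (or rather prices) the escape is the boundary condition $w_1(R)=\frac{s_+}{\sqrt2}$ together with the cost of the transition wall, which involves the radial gradient term and the weight $r\,dr$, not the bulk alone; your appeal to ``$H^1$ control plus Poincar\'e to propagate closeness'' does not supply this, and your accounting for the $\ln\ln$ correction presupposes the false starting point. The paper's proof handles exactly this dichotomy: take the largest $R_1$ with $w_1(R_1)=\frac{s_+}{(1+2\delta)\sqrt2}$ and the smallest $R_2>R_1$ with $w_1(R_2)=\frac{s_+}{(1+\delta)\sqrt2}$; the winding term on $[R_1,R]$ forces $R_1\ge R\exp\big(-\frac{2(1+2\delta)^2\alpha_R}{\pi s_+^2k^2}\big)$; on $[R_1,R_2]$ the coercivity \eqref{Eq:hQdep} applies (there $w_1$ is pinned between the two thresholds, hence away from both zeros of $h$, and $w_0^2+w_1^2\le\frac23 s_+^2$), bounding $R_1(R_2-R_1)\le C\alpha_R/\delta^2$; and Cauchy--Schwarz on $\int_{R_1}^{R_2}|w_1'|^2\,r\,dr$ across the transition closes the loop, giving $\Lambda e^{\Lambda}\ge\frac{\delta^2R}{Ck^2}$ with $\Lambda=\frac{2(1+2\delta)^2\alpha_R}{\pi s_+^2k^2}$ and hence the $\ln-\ln\ln$ bound. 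Without an argument of this type, using continuity of $w_1$, the boundary value and the gradient term to control the escape, your lower bound does not go through.
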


\begin{remark}
In \cite{BaumanParkPhillips}, it was shown that $\mcFRdom$ has critical points whose energies are of order $k\ln R$; and these are not $SO(2)$-symmetric for $k \neq \pm 1$.
\end{remark}

\begin{proof} By \eqref{Eq:LdGW}, we have
\begin{multline*}
\alpha_R 
	= 2\pi \min \Big\{\mcE_R[w_0,w_1]: w_0 \in H^1((0,R);r\,dr), w_1 \in H^1((0,R);r\,dr) \cap L^2((0,R);\frac{1}{r}dr), \\
			 w_0(R) = -\frac{s_+}{\sqrt{6}}, w_1(R) = \frac{s_+}{\sqrt{2}}\Big\},
\end{multline*}
where
\begin{align*}
\mcE_R[w_0, w_1]
	& = \int_0^R  \Big\{\frac{1}{2}[|w_0'|^2 + |w_1'|^2] + \frac{k^2}{2r^2}|w_1|^2 + h(w_1,w_0)\Big\}rdr,\\
h(x,y)
	&= \big(-\frac{a^2}{2} + \frac{c^2}{4}[|x|^2 + |y|^2]\big)[|x|^2 + |y|^2]
		- \frac{b^2\sqrt{6}}{18}\,y(y^2 - 3x^2) - f_*,
\end{align*}
and $f_*$ is given by \eqref{Eq:f*def}.

We note (see e.g. \cite[Lemma 5.1]{INSZ_CVPDE}) that $h(x,y) \geq 0$ and equality holds if and only if $(x,y)$ belongs to the set $\{(\pm\frac{s_+}{\sqrt{2}},-\frac{s_+}{\sqrt{6}}), (0,\frac{2s_+}{\sqrt{6}})\}$. Furthermore, the Hessian of $h$ is positive definite at these critical points. In particular, one has%
\begin{equation}
h(x,y) \geq \frac{1}{C}(x - \frac{s_+}{\sqrt{2}})^2 \text{ for all } (x,y) \text{ satisfying } x^2 + y^2 \leq \frac{2}{3}s_+^2, \frac{s_+}{3\sqrt{2}} \leq x \leq \frac{s_+}{\sqrt{2}},
	\label{Eq:hQdep}
\end{equation}
where, here and below, $C$ denotes some positive constant (that may change from line to line) which depends only on $a^2$, $b^2$ and $c^2$, and in particular is always independent of $R$, $k$ and $\delta$. 

\medskip
\noindent\underline{Step 1:} Proof of the upper bound for $\alpha_R$ in \eqref{Eq:alphaRLB}.

Consider the test function $(\bar w_0, \bar w_1)$ defined by $\bar w_0(r) \equiv -\frac{s_+}{\sqrt{6}}$ and $\bar w_1(r) = \frac{s_+}{\sqrt{2}} \min(r,1)$. Then
\[
\alpha_R \leq 2\pi\mcE_R[\bar w_0, \bar w_1] = 2\pi\mcE_1[\bar w_0, \bar w_1] + 2\pi\int_1^R \frac{s_+^2 k^2}{4r} dr \leq C k^2 + \frac{1}{2}\pi s_+^2 k^2 \ln R,
\]
which provides the upper bound on $\alpha_R$, given in the left hand side of  \eqref{Eq:alphaRLB}.

\medskip
\noindent\underline{Step 2:} Proof of the lower bound for $\alpha_R$ in \eqref{Eq:alphaRLB}.
\medskip

 Let $(w_0, w_1)$ be a minimizer of $\mcE_R$ the existence of which is guaranteed by the direct method of the calculus of variations.  We fix some $\delta \in (0,1)$. Due to the fact that $w_1$ is continuous, $w_1(0) = 0$ and $w_1(R) = \frac{s_+}{\sqrt{2}}$, there exists the largest number $R_1 \in (0,R)$ such that $w_1(R_1) = \frac{s_+}{(1 + 2\delta)\sqrt{2}}$. By the same arguments, there exists the smallest number $R_2 \in (R_1,  R)$ such that $w_1(R_2) = \frac{s_+}{(1 + \delta)\sqrt{2}}$.

As $w_1(r) \geq \frac{s_+}{(1 + 2\delta)\sqrt{2}}$ in $[R_1, R]$, we have
\[
\frac{1}{2\pi}\alpha_R = \mcE_R(w_0, w_1) \geq \int_{R_1}^R \frac{k^2}{2r} |w_1|^2\,dr \geq \frac{s_+^2 k^2}{4(1 + 2\delta)^2} \ln \frac{R}{R_1}.
\]
It follows that
\begin{equation}
R_1 \geq R\,\exp\Big(-\frac{2(1 + 2\delta)^2\alpha_R}{\pi s_+^2 k^2}\Big).
	\label{Eq:LogDiv1}
\end{equation}

On the other hand, by the definition of $R_1$ and $R_2$, we have $\frac{s_+}{(1 + 2\delta)\sqrt{2}} \leq w_1 \leq \frac{s_+}{(1+\delta)\sqrt{2}}$ in $[R_1,R_2]$. Also, by \cite[Eq. (3.12)]{INSZ_AnnIHP}, $w_0^2 + w_1^2 \leq \frac{2}{3} s_+^2$. Thus, using \eqref{Eq:hQdep}, we have
\[
h(w_1,w_0) \geq \frac{1}{C}(w_1 - \frac{s_+}{\sqrt{2}})^2 > \frac{\delta^2}{C} \text{ in } [R_1, R_2].
\]
Therefore, it follows that
\[
\frac{1}{2\pi}\alpha_R = \mcE_R(w_0, w_1) \geq \int_{R_1}^{R_2} h(w_0,w_1)\,r\,dr \geq \frac{\delta^2}{C}(R_2^2 - R_1^2) \geq \frac{\delta^2}{C} R_1(R_2 - R_1),
\]
and hence, in view of \eqref{Eq:LogDiv1},
\[
\frac{R_2 - R_1}{R_1} \leq \frac{C\alpha_R}{\delta^2 R^2}\,\exp\Big(\frac{4(1 + 2\delta)^2\alpha_R}{\pi s_+^2 k^2}\Big).
\]
This leads to, by Cauchy-Schwarz' inequality,
\begin{align*}
\frac{1}{\pi}\alpha_R 
	&= 2\mcE_R(w_0, w_1) \geq R_1\int_{R_1}^{R_2} |w_1'|^2\,dr \geq \frac{R_1}{R_2 - R_1}\Big(\int_{R_1}^{R_2} w_1'\,dr\Big)^2\\
	& = \frac{\delta^2 s_+^2\,}{2(1 + \delta)^2 (1 + 2\delta)^2} \frac{R_1}{R_2-R_1} \geq \frac{\delta^4 R^2}{C\alpha_R}\,\exp\Big(-\frac{4(1 + 2\delta)^2\alpha_R}{\pi s_+^2 k^2}\Big).
\end{align*}
Rearranging, we obtain
$
\Lambda e^\Lambda  \geq \frac{\delta^2 R}{C k^2}$ for $\Lambda := \frac{2(1 + 2\delta)^2\alpha_R}{\pi s_+^2 k^2},
$
which implies
\[
\frac{2(1 + 2\delta)^2\alpha_R}{\pi s_+^2 k^2} = \Lambda \geq \ln \frac{\delta^2 R}{C k^2} - \ln \ln \frac{\delta^2 R}{C k^2} \quad \text{ provided }  \quad \ln \frac{\delta^2 R}{C k^2} \geq 1.
\]
The conclusion of the result is immediate.
\end{proof}

In order to use the mountain pass theorem to show that $\mcF^R$ has more than two critical points, we need to exhibit a path $\gamma$ connecting the two minimizers $Q_R^\pm$ of $\mcF^R$ such that 
\[
\sup_{t} \mcF^R[\gamma(t)] < \alpha_R
\]
where $\alpha_R$ is the minimal energy of $\mcF^R|_{\mcA_R^{str}}$. 

The existence of such a path is a priori not clear. Indeed, note that, as $R \rightarrow \infty$ and after a suitable rescaling, $Q_R^\pm$ tend to $Q_*^\pm$ (see equation \eqref{Eq:Q*Def} in the previous section). As maps from $D$ into $\mcS_*$, $Q_*^+$ and $Q_*^-$ belong to different homotopy classes and so cannot be connected by a continuous path in $H^1(D,\mcS_*)$. The desired path $\gamma$ must therefore necessarily leave the limit manifold $\mcS_*$. In particular, the contribution of the bulk energy potential $f_{\rm bulk}$ to $\mcF^R[\gamma(t)]$ cannot be neglected.

Our construction of the path $\gamma$ is of a completely different flavor. We exploit the conformal invariance of the Dirichlet energy in $2D$ to connect $Q_R^\pm$ to $Q_{R_0}^\pm$ for some fixed $R_0$ by using $Q_r^\pm$ (with variable $r$) and their inverted copies, and then finally connect $Q_{R_0}^+$ and $Q_{R_0}^-$. As a result we obtain a mountain path with energy $O_k(1)$ (see \eqref{Eq:MPEnergy}), which is clearly less than $\alpha_R$ for large $R$.

\begin{proof}[Proof of Theorem \ref{thm:FiveSol}]
In the proof, $C$ denotes some positive constant which is always independent of $R$.
As denoted earlier, critical points of $\mcFRdom$ in $\mcA_R^{rs}$ or $\mcA_R^{str}$ are critical points of $\mcFRdom$ in $H_{Q_b}^1(B_R, \mcS_0)$. Therefore it suffices to work with $\mcFRdom\big|_{\mcA_R^{rs}}$. To simplify the notation in what follows we still use $\mcFRdom$ instead of $\mcFRdom\big|_{\mcA_R^{rs}}$.

By Theorem \ref{thm:MinSymmetry}, there exists $R_0 > 0$ such that, for $R \geq R_0$, $\mcFRdom$ has two distinct minimizers in $\mcA_R^{rs}$ which are $O(2)$-symmetric but not $\Z_2 \times O(2)$-symmetric (in fact, they are $\Z_2$-conjugate). We label these minimizers as $Q_R^\pm$ and claim that, for any $0 < d < \|Q_R^+ - Q_R^-\|_{H^1(B_R)}$, we have
\begin{align*}
\inf \Big\{ \mcFRdom[Q]: Q \in \mcA_R^{rs}, \|Q - Q_R^+\|_{H^1(B_R)} = d\Big\} > \mcFRdom[Q_R^\pm].
\end{align*}
Assume by contradiction that there exists a sequence $\{Q_m\}_{m\in\mathbb{N}} \subset \mcA_R^{rs}$ satisfying $\|Q_m - Q_R^+\|_{H^1(B_R)} = d$ such that $\mcFRdom[Q_m] \rightarrow \mcFRdom[Q_R^\pm]$ as $m\to \infty$. Without loss of generality, we can also assume that $Q_m$ is weakly convergent in $H^1(B_R,\mcS_0)$ and strongly convergent in $L^p(B_R,\mcS_0)$ for any $p \in [1,\infty)$. The limit of $Q_m$ is then a minimizer of $\mcFRdom$, and thus, by Theorem \ref{thm:MinSymmetry} and our assumption on $d$, must coincide with $Q_R^+$. Now, as $Q_m \to Q_R^+$ in $L^4(B_R,\mcS_0)$ and $\mcFRdom[Q_m] \rightarrow \mcFRdom[Q_R^+]$, we have that $\|\nabla Q_m\|_{L^2(B_R)} \to \|\nabla Q_R^+\|_{L^2(B_R)}$, which further implies that $Q_m \to Q_R^+$ in $H^1(B_R,\mcS_0)$ as $m\to \infty$. This contradicts the fact that $\|Q_m - Q_R^+\|_{H^1(B_R)} = d>0$ for every $m$. The claim is proved. 

It is standard to check that $\mcF^R$ satisfies the Palais-Smale condition. Indeed, if $Q_m$ is a Palais-Smale sequence for $\mcF^R$, then as $f_{\rm bulk}\geq 0$, $Q_m$ is bounded in $H^1(B_R,\mcS_0)$. Now note that $D\mcFRdom(Q_m) = -\Delta Q_m + V(Q_m)$ for some nonlinear operator $V: H^1(B_R,\mcS_0) \rightarrow H^{-1}(B_R,\mcS_0)$ which, by the compact embedding theorem, maps bounded sets of $H^1(B_R,\mcS_0)$ into relatively compact sets of $H^{-1}(B_R,\mcS_0)$. Thus, up to extracting a subsequence, we may assume that $Q_m \rightharpoonup Q$ weakly in $H^1$ and $V(Q_m) \rightarrow V(Q)$ in $H^{-1}$. As $D\mcFRdom(Q_m) \rightarrow 0$ in $H^{-1}$ it follows that $-\Delta Q_m \rightarrow -\Delta Q$ in $H^{-1}$ and so $Q_m \rightarrow Q$ in $H^{1}$ as wanted.

Applying the mountain pass theorem (see e.g. \cite[Theorem 6.1]{Struwe}), we conclude for $R \geq R_0$ that $\mcFRdom$ has a mountain pass critical point in $\mcA_R^{rs}$ connecting $Q_R^\pm$, which will be denoted by $Q_R^{mp}$. 

\medskip

\noindent {\bf Claim}: There exists some $C > 0$ independent of $k$ such that
\begin{equation}
\mcFRdom[Q_R^{mp}] \leq C(R_0^2 + |k|) \text{ for all } R > R_0.
	\label{Eq:MPEnergy}
\end{equation}
To this end, it suffices to construct a continuous path $\gamma: [-2,2] \rightarrow \mcA_R^{rs}$ such that $\gamma(\pm 2) = Q_R^\pm$ and 
\begin{equation}
\mcFRdom[\gamma(t)] \leq C(R_0^2 + |k|) \text{ for all } t \in [-2,2],
	\label{Eq:MPPathEnergy}
\end{equation}
where $C$ is independent of $R$, $k$ and $t$.

\medskip

\noindent {\bf Proof of Claim}. Let $n_*^\pm$ be defined by \eqref{Eq:n*Def}. Its rescaled version to $B_R$ is given by
\[
n_{R,*}^\pm(r\cos\varphi,r\sin\varphi) = \Big(\frac{2R^{\frac{k}{2}}r^{\frac{k}{2}} \cos(\frac{k}{2}\varphi)}{R^k + r^k},\frac{2R^{\frac{k}{2}}r^{\frac{k}{2}} \sin(\frac{k}{2}\varphi)}{R^k + r^k}, \pm \frac{R^k - r^k}{R^k + r^k}\Big).
\]
We define $Q_{R,*}^\pm = s_+(n_{R,*}^\pm \otimes n_{R,*}^\pm - \frac{1}{3}I_3)$ and note that $f_{\rm bulk}(Q_{R,*}^\pm) \equiv 0$. It follows that
\begin{equation}
 \mcFRdom[Q_R^\pm] \leq \mcFRdom[Q_{R,*}^\pm] = \frac{1}{2}\int_{B_R} |\nabla Q_{R,*}^\pm|^2\,dx = s_+^2 \int_D |\nabla n_*^\pm|^2 = 4\pi |k|\,s_+^2.
	\label{Eq:UnifEnergyBnd}
\end{equation}
\medskip

\noindent {\it Step 1.} We first construct $\gamma\big|_{[-2,-1] \cup [1,2]}$. For that, let $r_1, r_2: [-2,2] \rightarrow [R_0,R]$ be given by
\begin{align*}
r_1(t)
	&=\left\{ \begin{array}{ll} 
R_0, & t\in [-1,1],\\
(R-R_0)|t|+2R_0-R, & t\in [-2,2] \setminus [-1,1],
\end{array}\right.
\\
r_2(t) 
	&= (r_1(t)R)^{1/2} .
\end{align*}
\noindent For $1 \leq t \leq 2$ we define $\gamma(\pm t): B_R \rightarrow \mcS_0$ by
\[
\gamma(\pm t)(x) = \left\{\begin{array}{ll}
Q_{r_1(t)}^{\pm}(x) &\text{ if } |x| \leq r_1(t),\\
Q_R^+(\frac{r_2(t)^2}{|x|^2} x) & \text{ if } r_1(t) < |x| < r_2(t),\\
Q_R^+(x) & \text{ if } r_2(t) \leq |x| \leq R.
\end{array}\right.
\]
To dispel confusion, we note that on the lower two cases (i.e., $r_1(t) < |x| < R$), we are using the ``plus'' minimizing branch $Q_R^+$. Since for any $r>0$ we have $Q_r^\pm(r\frac{x}{|x|}) = s_+(\nbdry \otimes \nbdry - \frac{1}{3}I_3)$, the inner and outer traces of $\gamma(t)$ at  $\partial B_{r_1(t)}$ coincide and so $\gamma(t)$ belongs to $\mcA_R^{rs}$. See Figure \ref{fig:figMPpath}. The continuity of $\gamma$ with respect to $t$ is a consequence of the uniqueness part of Theorem \ref{thm:MinSymmetry}.

\begin{figure}[h]
\begin{center}
\begin{tikzpicture}
\draw[->] (-0.5,0) -- (7,0);
\draw[->] (0,-0.5)--(0,3.5);
\draw plot[smooth] coordinates{(0,0.5) (0.5,0.6)  (1,0.7) (1.5,0.9) (2,2.5)};
\draw plot[smooth] coordinates{(2,2.5) (2.2,1.3)  (2.4,1.1) (2.6, 0.9) (3,0.9) (3.4,0.7)};
\draw plot[smooth] coordinates{(6,2.5) (5.4,1.3)  (5,1.1) (4.6, 0.9) (4, 0.9) (3.4,0.7)};

\draw[dashed] (2,0)--(2,2.5);
\draw[dashed] (3.4,0)--(3.4,0.7);
\draw[dashed] (6,0)--(6,2.5);

\draw[dashed] (0,2.5)--(6,2.5);

\draw[dashed,->] (2.7,3.3)--(2.7,1);

\draw (6.9,-0.3) node {$x$}
(6,-0.3) node {$R$}
(3.4,-0.3) node {$r_2(t)$}
(2,-0.3) node {$r_1(t)$}
(-0.7,2.5) node {$Q_b(x)$}
(-0.7,3.4) node {$\gamma(t)(x)$}
(1,1.3) node {$Q_{r_1(t)}^+$}
(5,1.5) node {$Q_R^+$}
(3,3.5) node {inverted copy of $Q_R^+$}
;
\end{tikzpicture}
\end{center}
\caption{A schematic `graph' of $\gamma(t)$ for $t \in [1,2]$.}
\label{fig:figMPpath}
\end{figure}
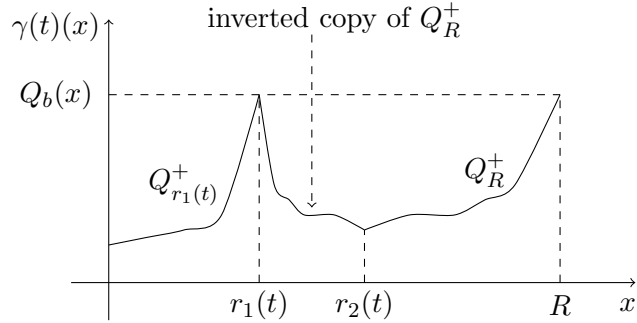

By construction, it is clear that $\gamma(\pm 2)=Q_R^{\pm}$ and $\gamma(\pm 1)\big|_{B_{R_0}}=Q_{R_0}^\pm$.

Let us check that \eqref{Eq:MPPathEnergy} holds for $1 \leq |t| \leq 2$. In view of \eqref{Eq:UnifEnergyBnd} and the fact that the integrand of $\mcF$ is non-negative, we have
$$ 
\mcF^{r_1(t)}[Q^\pm_{r_1(t)}]\leq C|k|, \quad \mcF[Q_R^+, B_R\setminus B_{r_2(t)}]\leq \mcFRdom[Q^+_R]\leq C|k|.
$$
Therefore, we only need to show that
\begin{equation}
\int_{B_{r_2(t)} \setminus B_{r_1(t)}} \Big[\frac{1}{2}|\nabla \gamma(t)|^2 + f_{\rm bulk}(\gamma(t))\Big]\,dx \leq 4\pi |k| s_+^2.
	\label{Eq:MPPEDerived}
\end{equation}
Indeed, by a change of variable $y = \frac{r_2(t)^2}{|x|^2} x$ , we have in view of \eqref{Eq:UnifEnergyBnd}:
\begin{align*}
\int_{B_{r_2(t)} \setminus B_{r_1(t)}} \Big[\frac{1}{2}|\nabla \gamma(t)|^2 + f_{\rm bulk}(\gamma(t))\Big]\,dx
	&=   \int_{B_{R} \setminus B_{r_2(t)}} \Big[\frac{1}{2}|\nabla Q_R^+|^2 + \underbrace{\frac{r_2(t)^4}{|y|^4}}_{\leq 1} f_{\rm bulk}(Q_R^+)\Big]\,dy\\
&	\leq \mcFRdom[Q_R^+] \leq 4\pi |k| s_+^2,
\end{align*}
which proves \eqref{Eq:MPPEDerived}.

\medskip

\noindent {\it Step 2. } We continue the argument by letting $\gamma\big|_{(-1,1)}$ be the linear interpolation between $\gamma(\pm 1)$, i.e. $\gamma(t) = \frac{1}{2}[(t + 1) \gamma(1) - (t - 1)\gamma(-1)]$. 

We now check \eqref{Eq:MPPathEnergy} for $|t| \leq 1$. Note that $\gamma(t) = \frac{1}{2}[(t + 1) Q_{R_0}^+ - (t - 1)Q_{R_0}^+]$ in $B_{R_0}$. A standard argument using the maximum principle (see the proof of \cite[Proposition 3]{Ma-Za}) shows that $|Q_{R_0}^{\pm}| \leq \sqrt{\frac{2}{3}}s_+$. Hence
\[
\mcF^{R_0}[\gamma(t)] \leq C R_0^2 + \frac{1}{2} \int_{B_{R_0}}|\nabla \gamma(t)|^2\,dx.
\]
This together with the convexity of the Dirichlet energy, the non-negativity of the integrand of $\mcF$, and \eqref{Eq:UnifEnergyBnd} gives
\[
\mcF^{R_0}[\gamma(t)] \leq C(R_0^2 + |k|).
\]
On the other hand, as $\gamma(t)(x) = \gamma(1)(x)$ in $B_R \setminus B_{R_0}$, we have, in view of non-negativity of the integrand of $\mcF$,
\[
\mcFRdom[\gamma(t)]=\mcF^{R_0}[\gamma(t)] +\mcF[\gamma(1), B_R\setminus B_{R_0}]
	\leq \mcF^{R_0}[\gamma_0(t)] +\mcFRdom[\gamma(1)].
\]
Recalling \eqref{Eq:MPPathEnergy} for $t = 1$, we conclude the proof of the claim.

Let us prove now that $Q_R^{mp}\notin \mcA_R^{rs}$ for sufficiently large $R$. Indeed, we take $R_1>\max(R_0,4C_1ek^2)$  such that  for all $R>R_1$ and $k\in 2\ZZ \setminus\{0\}$, we have 
$$
 \frac{\pi s_+^2 k^2}{8} \,(\ln \frac{R}{4C_1k^2} - \ln \ln \frac{ R}{4C_1k^2})>C_2(R_0^2 + |k|),
$$
where $C_1$ is the constant from \eqref{Eq:alphaRLB} corresponding to $\delta = 1/2$, and the constants $C_2$  and $R_0$ are the ones from \eqref{Eq:MPEnergy}. Then the mountain pass critical point $Q_R^{mp}$ (which belongs to $\mcA_R^{rs}$) has the energy $\mcFRdom(Q_R^{mp})$ bounded from above by $C_2|k|$ and thus does not belong to $\mcA_R^{str}$ thanks to Lemma \ref{Lem:StrongLogDiv}. 
In other words, $Q_R^{mp}$ is $O(2)$-symmetric and is not $\Z_2\times O(2)$-symmetric.

Let us now construct a second mountain pass critical point $\tilde Q_R^{mp}$. Indeed, the lack of $\Z_2\times O(2)$-symmetry implies that in the decomposition $Q_R^{mp}(x) =w_0(|x|)E_0+w_1(|x|)E_1+w_3(|x|)E_3$ we have that $w_3\not\equiv 0$. It follows that $\tilde Q_R^{mp}(x) =  w_0(|x|)E_0 + w_1(|x|)E_1 - w_3(|x|)E_3 $ is an additional critical point of $\mcFRdom$ with the same energy as $Q_R^{mp}$ (it is necessarily of mountain pass type).

Let us now construct a fifth critical point $Q_R^{str}$ that will be $k$-radially symmetric. Indeed, minimizing the energy $\mcFRdom \big|_{\mcA_R^{str}}$ one can show that $\mcFRdom$ has a critical point in $\mcA_R^{str}$, called $Q_R^{str}$. By the above energy estimates and Lemma \ref{Lem:StrongLogDiv}, it is clear that $Q_R^{str}$ differs from $Q_R^\pm$, $Q_{R}^{mp}$ and $\tilde Q_R^{mp}$. We have thus shown that $\mcFRdom$ has at least five $k$-fold $O(2)$-symmetric critical points in $\mcA_R$, at least four of which are not  $k$-fold $\Z_2 \times O(2)$-symmetric.
\end{proof}

\medskip
\noindent{\bf Acknowledgment.} The authors would like to thank the Isaac Newton Institute for Mathematical Sciences for support and hospitality during the programme {\it The mathematical design of new materials} when work on this paper was undertaken. This work was supported by EPSRC grant number EP/R014604/1.
R.I. acknowledges partial support by the ANR project ANR-14-CE25-0009-01. V.S. acknowledges support from EPSRC grant EP/K02390X/1 and Leverhulme grants RPG-2018-438. 
The work of A.Z. is supported by the Basque Government through the BERC 2018-2021
program, by Spanish Ministry of Economy and Competitiveness MINECO through BCAM
Severo Ochoa excellence accreditation SEV-2017-0718 and through project MTM2017-82184-R
funded by (AEI/FEDER, UE) and acronym ``DESFLU''. 
\appendix

\section{Remarks on minimizers for odd $k$}
\label{sec:appendixoddk}

In this appendix we would like to make some remarks on the energy bounds and the symmetry properties of minimizers of the energy
$$
\mcF_\eps[Q;\Omega]:=\int_{\Omega} \Big[ \frac{1}{2}|\nabla Q|^2 +\frac{1}{\eps^2} f_{\rm bulk}(Q)\Big]\,dx
$$ 
defined on a fixed finitely-connected, bounded $C^1$ domain $\Omega$ and subject to a given $C^1$ planar $\mcS_*$-valued boundary condition $Q_b$, i.e. $Q_b$ takes values in the set
\begin{equation}
\mcS_*^{planar} = \Big\{s_+\Big(v \otimes v - \frac{1}{3}I_3\Big)\, :\, v \in \Sphere^1\Big\} \text{ with } \Sphere^1 = \{(v_1, v_2, 0) \in \RR^3\, :\, |v| = 1\}  \subset \RR^3.
	\label{Eq:RP1S1}
\end{equation}

Note that $\mcS_*^{planar}$ is homeomorphic to $\mathbb{R} P^1$ and continuous maps from $\partial \Omega$ into $\mcS_*^{planar}$ have well-defined $\frac{1}{2}\ZZ$-valued degrees; see for instance Brezis, Coron and Lieb \cite[Section VIII, part B]{BrezisCoronLieb}. The relation between $\mcS_*^{planar}$ and $\Sphere^1$ in \eqref{Eq:RP1S1} is manifested in the fact that a map $Q \in C(\partial\Omega,\mcS_*^{planar})$ can be written in the form $Q = s_+\big(v \otimes v - \frac{1}{3}I_3\big)$ for some $v \in C(\partial\Omega,\Sphere^1)$ if and only if the degree $\frac{k}2$ of $Q$ is an integer (i.e., $k\in \Z$ is even), and, in which case, is equal to that of $v$.

In the discussion to follow, we assume that $Q_b$ has degree $\frac{k}{2}$ with $k \in \Z\setminus\{0\}$ (which does not necessarily have the form \eqref{Qbdef}-\eqref{def:n}). We consider both cases of {\bf even and odd $k$} and note that they are fundamentally different in the limit $\eps\to 0$: for odd $k$ the limiting energy is infinite, while for even $k$ the limit has finite energy.

As usual, let $H^1_{Q_b}(\Omega,\mcS_0)$ denote the set of $H^1$ maps from $\Omega$ into $\mcS_0$ equal to $Q_b$ on $\partial \Omega$.

\begin{remark} 
Let $\Omega \subset \RR^2$ be a fixed finitely-connected, bounded $C^1$ domain and $Q_b: \partial \Omega \rightarrow \mcS_*^{planar}$ be an arbitrary $C^1$ map of degree $\frac{k}{2}$ for some $k \in \ZZ$.
 
For {\bf{even }}$k \neq 0$, there exist $\eps_0>0$ and $C_1, C_2>0$ such that for all $\eps<\eps_0$
\be\label{bd-even}
C_1 \leq \min_{H^1_{Q_b}(\Omega,\mcS_0)} \mcF_\eps[\cdot; \Omega] \leq C_2.
\ee

For {\bf odd $k$}, there exist $\eps_1>0$ and $C > 0$ such that for all $\eps<\eps_1$
\begin{equation}\label{bd-odd}
\frac{\pi}{2} s_+^2 |\ln \eps | -C \leq \min_{H^1_{Q_b}(\Omega,\mcS_0)} \mcF_\eps[\cdot;\Omega] \leq \frac{\pi}{2} s_+^2 |\ln \eps | + C.
\end{equation}
\end{remark}

\begin{remark}
For the unit disk $\Omega =D$, the following can be stated in the case the boundary data
$Q_b$ is given by \eqref{Qbdef}-\eqref{def:n}. We have seen that when $k$ is {\bf even} the minimizers of $\mcF_\eps$ in $H^1_{Q_b}(D,\mcS_0)$ are $k$-fold $O(2)$-symmetric. 

When $k$ is {\bf odd} the symmetry of minimizers is more delicate and so far is {\bf unknown}. For $k=\pm 1$, we conjecture that there exists a unique minimizer and this minimizer is $\Z_2 \times O(2)$-symmetric -- see \cite{INSZ_CVPDE,INSZ18_CRAS} for some supporting evidence. In the case of odd $|k|>1$ the above energy bounds forbid minimizers to have $\Z_2 \times SO(2)$-symmetry (in view of Lemma \ref{Lem:StrongLogDiv}) as well as the configuration of $k$-vortices of degree $\pm \frac{1}{2}$ constructed in \cite{BaumanParkPhillips}. 
\end{remark}

We would like now to briefly outline how one can obtain energy bounds \eqref{bd-even} and \eqref{bd-odd}. When $D$ is a disk and $k$ is odd, these bounds were established by Canevari \cite{canevari2015biaxiality, canevari2017line}. We will see below that \eqref{bd-even} and the upper bound in \eqref{bd-odd} can be established by elementary arguments using some extension property for $\mcS_*^{planar}$-valued maps of degree zero. The proof of the lower bound in \eqref{bd-odd} is more substantial and draws on the corresponding aforementioned estimate for disks \cite{canevari2017line}.

Let us start with \eqref{bd-even} when $k$ is even. The lower bound $C_1 > 0$ can be taken to be the minimal Dirichlet energy under the given (non-constant) boundary data $Q_b$ (since $f_{\rm bulk} \geq 0$). For the upper bound, we construct a test function by splitting the domain $\Omega$ into a disk $B_R \subset \Omega$, which without loss of generality is assumed to be centered at the origin, of some small radius $R$, and the complement $\Omega \setminus B_R$. On the boundary of the disk, we impose the test function to have boundary data of the form \eqref{Qbdef}-\eqref{def:n}. We define $Q_{test}$ by joining together minimizers of $\mcF_\eps[\cdot ;B_R]$ and $\mcF_\eps[\cdot;\Omega \setminus B_R]$ with respect to the indicated boundary data on each subdomain. It is clear that the minimal energy satisfies the following bound
\be 
\min_{H^1_{Q_b}(\Omega,\mcS_0)} \mcF_\eps[\cdot ;\Omega] \leq \mcF_\eps[Q_{test};\Omega] = \min \mcF_\eps[Q;B_R] + \min \mcF_\eps[Q;\Omega \setminus B_R],
	\label{Eq:30VII19-A1}
\ee
where the two minimizations on the right hand side are under the constraint that $Q = Q_{test}$ on the respected boundary. We know (see Table \ref{Table1}) that the first term on the right hand side of \eqref{Eq:30VII19-A1} is bounded uniformly in $\eps$. Since the degree of the map $Q_{test}: \partial (\Omega \setminus B_R) \rightarrow \mcS_*^{planar}$ is zero we can use an $H^1$ extension -- see Lemma \ref{Lem:H1Ext} below -- to show that the second term on the right hand side of \eqref{Eq:30VII19-A1} is bounded uniformly in $\eps$. Estimate \eqref{bd-even} follows.

\begin{lemma}\label{Lem:H1Ext}
Let $G \subset \RR^2$ be a bounded finitely-connected $C^1$ domain. Then every map $Q \in C^1(\partial G, \mcS_*^{planar})$ of degree zero extends to a map $Q \in H^1(G,\mcS_*^{planar})$.
\end{lemma}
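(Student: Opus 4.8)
The plan is to transport the problem to circle-valued maps. Since $\mcS_*^{planar}$ is a smoothly embedded copy of $\RP^1$, it is diffeomorphic to $\Sphere^1$; fix once and for all a diffeomorphism $\Phi\colon \Sphere^1 \to \mcS_*^{planar}$, e.g.\ $\Phi(\cos\theta,\sin\theta) = s_+\big(\nu(\tfrac{\theta}{2})\otimes\nu(\tfrac{\theta}{2}) - \tfrac13 I_3\big)$ with $\nu(\alpha) = (\cos\alpha,\sin\alpha,0)$, which is well defined and smooth on $\Sphere^1$ (the sign ambiguity $\nu\mapsto-\nu$ is absorbed by the tensor square) and is a bijective immersion between compact $1$-manifolds, hence a diffeomorphism. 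Write $\partial G = \Gamma_0\cup\Gamma_1\cup\dots\cup\Gamma_m$ as a disjoint union of $C^1$ Jordan curves, with $\Gamma_0$ the outer boundary and $\Gamma_1,\dots,\Gamma_m$ bounding the bounded complementary components (``holes'') $U_1,\dots,U_m$ of $G$, each $\Gamma_j$ oriented as a component of $\partial G$. Put $g := \Phi^{-1}\circ Q \in C^1(\partial G,\Sphere^1)$; for each $j$ the integer $\deg(g|_{\Gamma_j})$ is a fixed nonzero multiple, independent of $j$, of the $\tfrac12\ZZ$-valued degree of $Q|_{\Gamma_j}$, so the hypothesis $\deg Q = 0$ (i.e.\ that the degrees of $Q$ over the components of $\partial G$ sum to zero) amounts to $\sum_{j=0}^m \deg(g|_{\Gamma_j}) = 0$. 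It therefore suffices to build $\tilde g\in H^1(G,\Sphere^1)$ with $\tilde g|_{\partial G} = g$ and then set $Q_{ext} := \Phi\circ\tilde g$: since $\Phi$ is smooth with bounded differential and $\Sphere^1$ is compact, $Q_{ext}\in H^1(G,\mcS_*^{planar})$ and $Q_{ext}|_{\partial G} = Q$.

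For the circle-valued extension I use the ``winding-number peeling'' construction. Choose $a_j\in U_j$ for $j=1,\dots,m$, let $d_j := \deg(g|_{\Gamma_j})$ (so $\sum_{j=0}^m d_j = 0$), and set $\omega(z) := \prod_{j=1}^m \big(\tfrac{z-a_j}{|z-a_j|}\big)^{-d_j}$. As $a_j\notin\overline{G}$ we have $\omega\in C^\infty(\overline{G},\Sphere^1)$, and a direct winding-number computation shows $\omega$ has boundary degree $d_j$ on each $\Gamma_j$ --- directly on the inner curves $\Gamma_1,\dots,\Gamma_m$, and on $\Gamma_0$ thanks to $\sum_j d_j = 0$. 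Hence $g\,\overline{\omega}\in C^1(\partial G,\Sphere^1)$ has degree zero on every component of $\partial G$, so it admits a continuous (in fact $C^1$) lift $g\,\overline{\omega} = e^{i\psi}$, $\psi\colon\partial G\to\RR$. Let $\tilde\psi\in H^1(G)$ be the harmonic extension of $\psi$ (legitimate since $\psi\in C^1(\partial G)\subset H^{1/2}(\partial G)$ and $\partial G$ is $C^1$) and set $\tilde g := e^{i\tilde\psi}\,\omega$. Then $\tilde g$ is the product of two maps in $H^1(G)\cap L^\infty(G)$ and hence lies in $H^1(G,\Sphere^1)$, while $\tilde g|_{\partial G} = e^{i\psi}\,\omega = g$; this completes the construction.

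The argument is essentially routine. The only points needing care are (i) checking that $\Phi$ carries ``$\deg Q = 0$'' exactly onto ``$\sum_j\deg(g|_{\Gamma_j}) = 0$'', which is the sharp condition for extendability of $\Sphere^1$-valued boundary data on a finitely connected planar domain, and (ii) the orientation bookkeeping in the winding-number identity for $\omega$; neither is a serious obstacle. As an alternative, less explicit route, one may observe that $\mcS_*^{planar}\cong\Sphere^1$ is a $K(\ZZ,1)$, so $\deg Q = 0$ already yields a \emph{continuous} extension of $Q$ to $\overline{G}$, which can then be smoothed in $\RR^{3\times3}$, composed with the nearest-point retraction onto $\mcS_*^{planar}$, and corrected near $\partial G$ by a collar interpolation; the explicit construction above is preferable since it gives the $H^1$ bound directly and preserves the boundary trace exactly.
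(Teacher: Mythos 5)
Your proof is correct and follows essentially the same route as the paper: the paper also reduces the problem to circle-valued maps by noting that $\mcS_*^{planar}$ is diffeomorphic to $\Sphere^1$, and then simply cites the well-known extension result for degree-zero $\Sphere^1$-valued boundary data (B\'ethuel--Brezis--H\'elein, Section I.2). The only difference is that you write out that standard argument explicitly (winding-number peeling with the factors $\big(\tfrac{z-a_j}{|z-a_j|}\big)^{-d_j}$, lifting, and harmonic extension of the phase), together with the degree and orientation bookkeeping, where the paper relies on the citation.
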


\begin{proof}
As $\mcS_*^{planar}$ is diffeomorphic to a circle, this result is well known (see \cite[Section I.2]{vortices}). See also \cite[p.126]{hirsch2012differential} for results on continuous extensions in any dimension.
\end{proof}

We now consider the upper bound in \eqref{bd-odd} when $k$ is odd. We select two disjoint disks $B_R(x_1)$ and $B_R(x_2)$ inside $\Omega$. On the boundary of these disks, we consider the test function $Q_{test}$: 
\begin{equation}
Q_{test}(x) = s_+\Big(n(x) \otimes n(x) - \frac{1}{3}I_3\Big) , \qquad x \in \partial B_R(x_1) \cup \partial B_R(x_2),
	\label{Eq:AppGBC}
\end{equation}
where we set for $Re^{i\varphi}=(R\cos \varphi, R\sin \varphi)\in \partial B_R(0)$, $0 \leq \varphi < 2\pi$:
$$
n(x_1 + Re^{i\varphi}) 
	:= (\cos \frac{\varphi}{2}, \sin\frac{\varphi}{2}, 0), \quad n(x_2 + Re^{i\varphi}) 
	:= (\cos \frac{(k-1)\varphi}{2}, \sin\frac{(k-1)\varphi}{2}, 0).
$$
In $\Omega \setminus (B_R(x_1) \cup B_R(x_2))$ and $B_R(x_2)$, the test function $Q_{test}$ is constructed by minimizing $\mcF_\eps$ under the indicated boundary conditions. In $B_R(x_1)$, $Q_{test}$ is a minimizer of $\mcF_\eps$ in $\Z_2 \times O(2)$-symmetry. Using Lemma~\ref{Lem:StrongLogDiv} (namely the upper bound in \eqref{Eq:alphaRLB}) and arguing as in the previous case, we arrive at the upper bound in \eqref{bd-odd}. 

We turn to the lower bound in \eqref{bd-odd} when $k$ is odd. Take some large disk $B_{R'}(0) \supset \Omega$. We impose on $\partial B_{R'}(0)$ a boundary condition of the form \eqref{Eq:AppGBC} where 
$$
n(R'\cos \varphi, R'\sin \varphi) 
	= (\cos \frac{\varphi}{2}, \sin\frac{\varphi}{2}, 0), \quad	0 \leq \varphi < 2\pi.
$$
By \cite[Proposition 15]{canevari2017line}, we have within the above boundary condition (B.C.) on $\partial B_{R'}(0)$:
\[
\min_{B.C.} \mcF_\eps[\cdot;B_{R'}(0) ] \geq \frac{\pi}{2} s_+^2 |\ln \eps | -C,
\]
and by \eqref{bd-even}, within the above boundary condition (B.C.) on $\partial (B_{R'}(0) \setminus \Omega)$:
\[
\min_{B.C.} \mcF_\eps[\cdot;B_{R'}(0) \setminus \Omega ] \leq C.
\]
The lower bound in \eqref{bd-odd} follows from the above two estimates and the inequality
\[
\min_{B.C.} \mcF_\eps[\cdot;B_{R'}(0)] \leq \min_{H^1_{Q_b}(\Omega,\mcS_0)} \mcF_\eps[\cdot;\Omega ] + \min_{B.C.} \mcF_\eps[\cdot;B_{R'}(0) \setminus \Omega].
\]

\section{The Euler-Lagrange equations near $Q_*$}\label{App:Tech}

In this appendix, we give the proof of Lemma \ref{lemma:EL} and Proposition \ref{prop:ABCest} which concern the Euler-Lagrange equations for critical points of $\mcF$ relative to the representation in Lemma \ref{lem:TubNbhd}.

\begin{proof}[Proof of Lemma~\ref{lemma:EL}] We set $v = \frac{n_* + \psi}{|n_* + \psi|}$, $\hat v = n_* + \psi$. Recall that $n_* \cdot \psi = 0$, $Q_\sharp = s_+(v \otimes v - \frac{1}{3} I_3)$ and $Q = Q_\sharp + \eps^2 P$. We calculate separately  the elastic part in $\mcF_\eps$ and then the bulk term.

\noindent {\bf 1. The elastic part:}
\be\label{rel:elasticfull}
\int_\Omega |\nabla Q|^2\,dx= \int_\Omega \Big[2s_+^2\,|\nabla v|^2 + 2\eps^2s_+ \nabla (v \otimes v):\nabla P+\eps^4 |\nabla P|^2\Big]\,dx.
\ee
(Here $\nabla X : \nabla Y = \sum_{i} (\nabla_i X \cdot \nabla_i Y)$ for two matrix-valued maps $X$ and $Y$.) 

We calculate individually the first two terms.

\medskip

\noindent {\bf a. The $|\nabla v|^2$ term.}  Using the identities
\[
\nabla v  = \frac{1}{|\hat v|} \nabla  \hat v - \frac{1}{2|\hat v|^3} \hat v \otimes \nabla |\hat v|^2,
\]
and $n_* \cdot \psi = 0$ (in particular $|\hat v|^2 = 1 + |\psi|^2$), we see that
\begin{align*}
|\nabla v|^2 
	&= \frac{1}{|\hat v|^2} ( |\nabla \hat v|^2 - \frac{1}{4|\hat v|^2} |\nabla |\hat v|^2|^2)\\
	&= \frac{1}{1 + |\psi|^2} \Big(|\nabla\psi|^2 + |\nabla n_*|^2 + 2\nabla n_* \cdot \nabla \psi - \frac{1}{4(1 + |\psi|^2)} |\nabla |\psi|^2|^2\Big).
\end{align*}
Noting that 
\begin{align*}
\int_D \frac{1}{1 + |\psi|^2}\nabla n_* \cdot \nabla \psi\,dx
	&= - \int_D \frac{1}{1 + |\psi|^2} \Big[\Delta n_* - \frac{1}{1 + |\psi|^2}\nabla n_* \cdot \nabla |\psi|^2\Big] \cdot \psi\,dx\\
	&=   \int_D \frac{1}{(1 + |\psi|^2)^2} (\nabla n_* \cdot \nabla |\psi|^2) \cdot \psi\,dx,
\end{align*}
we obtain
\be
\int_D |\nabla v|^2\,dx= \int_D |\nabla n_*|^2\,dx + \int_D \big[|\nabla\psi|^2 - |\nabla n_*|^2\,|\psi|^2\big]\,dx + \int_D g(x,\psi,\nabla\psi)\,dx,\label{rel:nablavexpansion}
\ee
where $g$ is super-cubic in $(\psi,\nabla \psi)$ at zero:
\begin{align}
g(x,\psi,\nabla\psi)
	&=\frac{1}{1 + |\psi|^2} \Big[- |\psi|^2 (|\nabla\psi|^2 - |\nabla n_*|^2\,|\psi|^2) \nonumber\\
		&\qquad\qquad + \frac{2}{1 + |\psi|^2} (\nabla n_* \cdot \nabla |\psi|^2) \cdot \psi - \frac{1}{4(1 + |\psi|^2)} |\nabla |\psi|^2|^2\Big].\non
\end{align}

\medskip
\noindent {\bf b. The gradient coupling term $\nabla (v \otimes v): \nabla P$.} We write
\begin{align}
\int_\Omega \nabla (v \otimes v): \nabla P
	&= \int_\Omega \nabla\Big(\frac{1}{1 + |\psi|^2}(n_* + \psi) \otimes (n_* + \psi)\Big) :\nabla P\non\\
	&= \int_\Omega \nabla (n_* \otimes n_*): \nabla P + \nabla(n_* \otimes \psi + \psi \otimes n_*): \nabla P + \nabla \hat g(x,\psi):\nabla P\label{rel:vpgradexpansion}
\end{align}
where $\hat g$ is super-quadratic in $\psi$ at $\psi = 0$:
\[
\hat g(x,\psi) = -\frac{|\psi|^2}{1 + |\psi|^2}(n_* + \psi) \otimes (n_* + \psi) + \psi \otimes \psi.
\]
The expression $\nabla(n_* \otimes \psi + \psi \otimes n_*): \nabla P$ on the right hand side of \eqref{rel:vpgradexpansion} contains some terms which are quadratic in the derivatives of $P$ and $\psi$. However, we can eliminate this quadratic character by using some specific geometric information as follows: we note that $P \in (T_{Q_*}\mcS_*)^\perp$ and $n_* \otimes \psi + \psi \otimes n_* \in T_{Q_*}\mcS_*$. Indeed, by \cite[Lemma 2]{NZ13-CVPDE}, $Pn_*$ is parallel to $n_*$ and so $Pn_* = (P \cdot (n_* \otimes n_*)) n_*$. Also, as $\psi \cdot n_* = 0$ and $\Delta n_* \parallel n_*$, we have $\Delta \psi \cdot n_* = -2 \nabla \psi \cdot \nabla n_*$. Thus,  integrating by parts using again $\Delta n_* \parallel n_*$ gives
\begin{align*}
\int_D \nabla(n_* \otimes \psi): \nabla P\,dx
	&= \int_D \sum_k \nabla_k P \cdot (\nabla_k n_* \otimes \psi + n_* \otimes \nabla_k \psi)\,dx\\
	&=  \int_D \Big[\sum_k\nabla_k P \cdot (\nabla_k n_* \otimes \psi) - (Pn_*) \cdot \Delta \psi - ((\nabla n_*)^t\,P) \cdot\nabla\psi \Big]\,dx\\
	&= \int_D \Big[\sum_k \nabla_k P \cdot (\nabla_k n_* \otimes \psi)\\
			&\qquad\qquad + 2(P \cdot (n_* \otimes n_*)) (\nabla n_* \cdot \nabla\psi) - ((\nabla n_*)^t\,P) \cdot\nabla\psi\Big]\,dx.
\end{align*}
As $P$ is symmetric, we hence get
\begin{align}
\int_D \nabla(n_* \otimes \psi &+ \psi \otimes n_*): \nabla P\,dx
	= 2\int_D \Big[\sum_k\nabla_k P \cdot (\nabla_k n_* \otimes \psi)\non\\
			&+ 2(P \cdot (n_* \otimes n_*)) (\nabla n_* \cdot \nabla\psi) - ((\nabla n_*)^t\,P) \cdot\nabla\psi\Big]\,dx.
			\label{rel:gradexp2}
\end{align}
It is readily seen that the integral on the right hand side is linear in the derivatives of $P$ and $\psi$. This cancellation will play a role on our later analysis: its contribution to the Euler-Lagrange equations of $\mcF_\eps$ is of first order rather than second order.

\bigskip
\noindent {\bf 2. The bulk part:} We expand $f_{\rm bulk}(Q) = f_{\rm bulk}(Q_\sharp + \eps^2 P)$ in terms of powers of $\eps$. As $\mcS_*$ is the set of minimum points of $f_{\rm bulk}$, we have $f_{\rm bulk}(Q_\sharp) = 0$ and $\nabla f_{\rm bulk}(Q_\sharp) = 0$. We have:
\begin{align}
f_{\rm bulk}(Q)
	&= f_{\rm bulk}(Q) - f_{\rm bulk}(Q_\sharp) - \eps^2\,\nabla f_{\rm bulk}(Q_\sharp) \cdot P\non\\
&= \eps^4 \Big[-\frac{a^2}{2}|P|^2-b^2 P^2 \cdot Q_\sharp+\frac{c^2}{2} |Q_\sharp|^2 |P|^2+c^2 |P \cdot Q_\sharp|^2\Big]\non\\
	&\qquad +\eps^6\big[-\frac{b^2}{3}\textrm{tr}(P^3)+c^2 P \cdot Q_\sharp \,|P|^2\big]+\eps^8\frac{c^2}{4}|P|^4\non\\
	&=\eps^4\,h(x,P) + \eps^4 \hat h(x,\psi, P)  + \eps^6\,\mathring{h}_\eps(x,\psi, P),\label{rel:fbulkexpansion}
\end{align}
where:
\begin{align*}
h(x,P)
	&=\frac{b^2\,s_+}{2} |P|^2 - b^2\,s_+ P^2 \cdot (n_* \otimes n_*) +c^2\,s_+^2 |P \cdot (n_* \otimes n_*)|^2\\
\hat h(x,\psi,P)
	&= 	-b^2\,s_+ P^2 \cdot (v \otimes v - n_* \otimes n_*) +c^2\,s_+^2 [|P \cdot (v \otimes v)|^2- |P \cdot (n_* \otimes n_*)|^2],\\
\mathring{h}_\eps(x,\psi,P)
	&= -\frac{b^2}{3}\textrm{tr}(P^3)+c^2s_+ P \cdot (v \otimes v) \,|P|^2 +\eps^2\frac{c^2}{4}|P|^4.
\end{align*}
(Here we have used the identity $-a^2 - \frac{b^2}{3}s_+ + \frac{2c^2}{3} s_+^2 = 0$.) Note also that, as $n_* \cdot \psi = 0$ and $Pn_* \parallel n_*$, $\hat h(x,\psi,P)$ and $\mathring{h}_\eps(x,\psi,P) - \mathring{h}_\eps(x,0,P)$ are super-quadratic in $\psi$ at $\psi = 0$.

We now put together all the previous expressions, to get a new form of the full energy. Using the expression of $|\nabla v|^2$ from \eqref{rel:nablavexpansion} in \eqref{rel:elasticfull} and putting  \eqref{rel:gradexp2} in \eqref{rel:vpgradexpansion} and then in \eqref{rel:elasticfull} provide the elastic part. Putting this together with the expansion of the bulk part \eqref{rel:fbulkexpansion} into \eqref{def:mcF}
 we obtain
\begin{align*}
\mcF_\epsilon[Q]
	&=  s_+^2\int_D |\nabla n_*|^2\,dx
		 + s_+^2 \int_D \big[|\nabla\psi|^2 - |\nabla n_*|^2\,|\psi|^2\big]\,dx
		 	+ \eps^2\int_D \Big[\frac{\eps^2}{2}|\nabla P|^2 + h(x,P)\Big]\,dx\\
		&\qquad + \eps^2\,s_+\int_D \Big[ \nabla (n_* \otimes n_*): \nabla P + 2\sum_k \nabla_k P : (\nabla_k n_* \otimes \psi)\\
			&\qquad\qquad \qquad\qquad + 4(P \cdot (n_* \otimes n_*)) (\nabla n_* \cdot \nabla\psi) - 2((\nabla n_*)^t\,P) \cdot\nabla\psi\Big]\,dx\\
		&\qquad +  \int_D [s_+^2 g(x,\psi,\nabla\psi) + \eps^2\,\hat h(x,\psi,P)]\,dx
	+ \eps^2\int_D \Big[s_+\nabla \hat g(x,\psi): \nabla P+ \eps^2\mathring{h}_\eps(x,\psi,P))\Big]\,dx.
\end{align*}
The Euler-Lagrange equations for $\mcF_\epsilon$ in terms of $\psi$ and $P$ are then readily found to be of the form
\begin{align}
-\Delta \psi - |\nabla n_*|^2\,\psi
	&= A[\psi] + \eps^2\,B_\eps[\psi,P] + \lambda_\eps(x)\,n_*,
		\label{Eq:psiEqA}\\
-\eps^2 \Delta P + \mathring{\nabla}_P h (x,P)
	&=  s_+ \Delta( n_* \otimes n_* )
		+  C_\eps[\psi,P] - \frac{1}{3} \tr(C_\eps[\psi,P])I_3 + F_\eps(x),
		\label{Eq:PEqA}
\end{align}
where 
\begin{align}
\mathring{\nabla}_P h (x,P) &:= \nabla_P h (x,P) - \frac{1}{3}\tr(\nabla_P h (x,P))I_3 \non \\
 &=b^2\,s_+\,P + \frac{2}{s_+^2}(-b^2\,s_+ + c^2\,s_+^2)\,(P \cdot Q_*) \,Q_*
 	\label{Eq:09V19-A11}
\end{align}
is the gradient of $h$ with respect to $P \in \mcS_0$,\footnote{In deriving \eqref{Eq:09V19-A11}, it is useful to keep in mind the relation that $Pn_* \parallel n_*$.} $\lambda_\eps$ is a Lagrange multiplier accounting for the constraint $\psi \cdot n_* = 0$, $F_\eps(x) \in T_{Q_*}\mcS_*$ is a Lagrange multiplier accounting for the constraint $P(x) \in (T_{Q_*}\mcS_*)^\perp$, and
\begin{align}
A[\psi]_j
	&= \frac{1}{2}\nabla_i \Big[\frac{\partial g}{\partial (\nabla_i \psi_j)}(x,\psi,\nabla\psi)\Big]
		- \frac{1}{2} \frac{\partial g}{\partial \psi_j} (x,\psi,\nabla\psi),\label{def:A}\\
B_\eps[\psi,P]_j
	&= -\frac{1}{2s_+^2}\frac{\partial \hat h}{\partial \psi_j}(x,\psi,P) - \frac{1}{2s_+^2}\eps^2 \frac{\partial \mathring{h}_\eps}{\partial \psi_j}(x,\psi,P)
		+ \frac{1}{2s_+}\,\frac{\partial \hat g}{\partial\psi_j}(x,\psi)\cdot \Delta P\non\\
		&\quad + \frac{1}{s_+} \nabla_i \big[2(P \cdot (n_* \otimes n_*)) \nabla_i (n_*)_j -  \nabla_i (n_*)_k\,P_{kj}] - \frac{1}{s_+}\nabla_i P_{jk} \,\nabla_i (n_*)_k\label{def:B}\\
C_\eps[\psi,P]_{ij}
	&= -\frac{\partial \hat h}{\partial P_{ij}} (x,\psi,P)
		- \eps^2\frac{\partial \mathring{h}_\eps}{\partial P_{ij}} (x,\psi,P) + s_+ \Delta \hat g_{ij}(x,\psi) \non\\
		&\quad + 2s_+ \nabla_k(\nabla_k (n_*)_i \psi_j) - 4s_+ (n_*)_i (n_*)_j (\nabla n_* \cdot \nabla\psi) + 2s_+ \nabla_k (n_*)_i  \nabla_k \psi_j.\label{def:C}
\end{align} 
This finishes the proof of Lemma~\ref{lemma:EL}.
\end{proof}

We continue with the proof of the Lipschitz-type estimates for $A,B_\eps$ and $C_\eps$:
\bigskip
\begin{proof}[Proof of Proposition~\ref{prop:ABCest}]
Using the definitions of $A[\psi]$, $B_\eps[\psi,P]$ given in \eqref{def:A}, \eqref{def:B}, \eqref{def:C} together with the fact that $\frac{\partial}{\partial\psi_j} \hat h(x,0,P) = \frac{\partial}{\partial\psi_j} \mathring{h}_\eps(x,0,P) = 0$,\footnote{Recall that this is a consequence of the relations $n_* \cdot \psi = 0$ and $Pn_* \parallel n_*$.} we obtain $A[0] = 0$,
\begin{align*}
|A[\psi] - A[\tilde\psi]|
	&\leq C(|\psi| + |\tilde\psi|)\Big[ (1 + |\Delta \psi| + |\Delta\tilde\psi|)|\psi - \tilde\psi|+ |\nabla^2(\psi - \tilde\psi)|\\
	&\qquad\qquad +  (1 +  |\nabla \psi| + |\nabla\tilde \psi|)|\nabla(\psi - \tilde\psi)|\Big]\\
	&\qquad\qquad + C(|\nabla \psi| + |\nabla\tilde\psi|)(1 + |\nabla \psi| + |\nabla\tilde\psi|)|\psi - \tilde\psi|,
\\
|B_\eps(0,P)|
	&\leq  C(|\nabla P| + |P|),\\
|B_\eps[\psi,P] - B_\eps[\tilde \psi, P]|
	&\leq C\,(|\Delta P| + |P|^2 + \eps^2 |P|^3)|\psi - \tilde \psi|,\\
	|B_\eps[\psi, P] - B_\eps[\psi,\tilde P]| 
	&\leq C|\psi|[|\Delta (P- \tilde P)| + (|P| + |\tilde P|)(1 + \eps^2 (|P| + |\tilde P|)) |P - \tilde P|]\\
		&\qquad + C[|\nabla(P- \tilde P)| +  |P - \tilde P|].
\end{align*} 
which imply the claimed estimates \eqref{est:Aest0}, \eqref{est:Aest}, \eqref{est:Best0}, \eqref{est:Best1} and \eqref{est:Best2} in view of the embedding $H^2(D) \hookrightarrow W^{1,4}(D) \hookrightarrow L^\infty(D)$.

We split $C_\eps[\psi,P] = C_\eps^{(1)}[\psi,P] + C^{(2)}[\psi]$ where
\begin{align*}
C_\eps^{(1)}[\psi,P]_{ij}
	&= -\frac{\partial \hat h}{\partial P_{ij}} (x,\psi,P)
		-  \eps^2 \frac{\partial \mathring{h}_\eps}{\partial P_{ij}}(x,\psi,P),\\
C^{(2)}[\psi]_{ij}
	&=   2s_+ \nabla_k(\nabla_k (n_*)_i \psi_j) - 4s_+ (n_*)_i (n_*)_j (\nabla n_* \cdot \nabla\psi) + 2s_+ \nabla_k (n_*)_i  \nabla_k \psi_j\non\\
		&\qquad + s_+ \Delta \hat g_{ij}(x,\psi).
\end{align*}

We have
\begin{align*}
|C^{(2)}[\psi] - C^{(2)}[\tilde \psi]|
	&\leq C(1 + |\Delta \psi| + |\Delta \tilde\psi| + |\nabla\psi|^2 + |\nabla\tilde\psi|^2)|\psi - \tilde \psi|\\
		&\qquad + C(1 + |\psi| + |\tilde\psi| + |\nabla\psi| + |\nabla\tilde\psi|)|\nabla\psi - \nabla \tilde \psi| \\
		&\qquad + C(|\psi| + |\tilde\psi|) |\Delta\psi - \Delta \tilde \psi|,
\end{align*}
which implies
\[
\|C^{(2)}[\psi] - C^{(2)}[\tilde \psi]\|_{L^2(D)} 
	\leq C(1 + \|\psi\|_{H^2(D)} + \|\tilde\psi\|_{H^2(D)}+\|\nabla\psi\|_{L^4(D)}^2+\|\nabla\tilde\psi\|_{L^4(D)}^2) \|\psi - \tilde\psi\|_{H^2(D)}.
\] 

As for $C_\eps^{(1)}$, we have
\begin{align*}
|C_\eps^{(1)}[\psi,P] - C_\eps^{(1)}[\tilde \psi, \tilde P]|
	&\leq 
		C\Big[|\psi| + |\tilde\psi|  + \eps^2(|P| + |\tilde P|)\big(1 + \eps^2 (|P| + |\tilde P|)\big)\Big]|P - \tilde P| \\
		&\qquad + C(|P| + |\tilde P|)\big(1 + \eps^2 (|P| + |\tilde P|)\big)\,|\psi - \tilde \psi|.
\end{align*}
Hence
\begin{align*}
&\|C_\eps^{(1)}[\psi,P] - C_\eps^{(1)}[\tilde \psi, \tilde P]\|_{L^2(D)}\\
	&\qquad\leq 
		C(\|\psi\|_{H^2(D)} +\|\tilde\psi\|_{H^2(D)})  \|P - \tilde P\|_{L^2(D)}\\
			&\qquad\qquad+ C(\|P\|_{L^2(D)}+\|\tilde P\|_{L^2(D)})\,\big(1 + \eps^2 (\|P\|_{H^2(D)} + \|\tilde P\|_{H^2(D)})\big) \|\psi - \tilde \psi\|_{H^2(D)}
			\\
			&\qquad \qquad  + C\eps^2(\|P\|_{L^4(D)} + \|\tilde P\|_{L^4(D)}) \big(1 + \eps^2 (\|P\|_{H^2(D)} + \|\tilde P\|_{H^2(D)})\big)
			\|P - \tilde P\|_{H^1(D)},
\end{align*}
and thus we obtain the claimed estimate \eqref{est:Cest}.
\end{proof}


\section{Proof of Proposition \ref{cor:LInverse}}\label{App:LpaInv}

Proposition~\ref{cor:LInverse} easily follows from Lax-Milgram's theorem and Lemma~\ref{Lem:StrStab} below. 

\begin{lemma}\label{lem:NSStab}
Let $n_*$ be given by \eqref{Eq:n*Def}. For any $\zeta \in H_0^1(D,\RR)$, there holds
\[
I[\zeta] := \int_D [|\nabla \zeta|^2 - |\nabla n_*|^2\,\zeta^2]\,dx \geq 0.
\]
In particular, $n_*$ is a stable harmonic map. Equality holds if and only if $\zeta = \frac{t(1 - r^k)}{1+r^k}$ for some $t \in \RR$.
\end{lemma}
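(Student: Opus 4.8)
The plan is to prove the stability inequality $I[\zeta]\geq 0$ by exhibiting an explicit positive solution of the linearized operator and using the standard ``ground state substitution'' (Hardy-type) trick. First I would recall that $n_*$ as in \eqref{Eq:n*Def} is a harmonic map from $D$ into $\Sphere^2$, so the scalar linearized operator acting on normal variations $\zeta\, \nu$ (with $\nu$ a unit normal field along $n_*$) has the form $L\zeta = -\Delta\zeta - |\nabla n_*|^2\zeta$. The key observation is that the function $\phi(r) := \frac{1-r^k}{1+r^k}$ is, up to the factor $n_*$, the restriction of a one-parameter family of harmonic maps (rotations of $\Sphere^2$ fixing the $e_3$-axis composition, or equivalently the $\partial_t$-derivative of $n_*^{(t)}$ for a suitable deformation), so that $L\phi = 0$ in $D$. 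One checks this directly: with $n_* = \big(\frac{2r^{k/2}\cos\frac{k\varphi}{2}}{1+r^k}, \frac{2r^{k/2}\sin\frac{k\varphi}{2}}{1+r^k}, \frac{1-r^k}{1+r^k}\big)$, a computation gives $|\nabla n_*|^2 = \frac{2k^2 r^{k-2}}{(1+r^k)^2}\cdot\frac{?}{?}$; more cleanly, $|\nabla n_*|^2 = \frac{2 k^2 r^{k-2}}{(1+r^k)^2}$ after simplification, and $\phi(r)=\frac{1-r^k}{1+r^k}$ satisfies $\phi'' + \frac1r\phi' + |\nabla n_*|^2\phi = 0$ on $(0,1)$, i.e. $\phi$ is a radial zero-mode of $L$. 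Note $\phi > 0$ on $(0,1)$ since $0<r<1$ implies $r^k<1$ (here $k$ is even, hence positive WLOG, so $r^k\in(0,1)$).

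Given this positive zero-mode $\phi$, I would carry out the substitution $\zeta = \phi\,\eta$ for $\zeta\in C_c^\infty(D)$ (the general case $\zeta\in H^1_0$ follows by density, using that $\phi$ is bounded below by a positive constant on compact subsets of $D$; near $r=1$, $\phi$ vanishes but $\zeta$ is compactly supported, and the limiting argument can be done by truncation). Then
\[
|\nabla\zeta|^2 = \phi^2|\nabla\eta|^2 + 2\phi\eta\,\nabla\phi\cdot\nabla\eta + \eta^2|\nabla\phi|^2,
\]
and integrating by parts,
\[
\int_D \big(2\phi\eta\,\nabla\phi\cdot\nabla\eta + \eta^2|\nabla\phi|^2\big)\,dx
= \int_D \nabla\phi\cdot\nabla(\phi\,\eta^2)\,dx
= -\int_D \eta^2\,\phi\,\Delta\phi\,dx.
\]
Since $-\Delta\phi = |\nabla n_*|^2\phi$, this equals $\int_D \eta^2\,\phi^2\,|\nabla n_*|^2\,dx$, which exactly cancels the zeroth-order term in $I$:
\[
I[\zeta] = \int_D |\nabla\zeta|^2 - |\nabla n_*|^2\zeta^2\,dx = \int_D \phi^2\,|\nabla\eta|^2\,dx \geq 0.
\]
This proves nonnegativity. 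For the equality case: $I[\zeta]=0$ forces $\nabla\eta = 0$ a.e., hence $\eta\equiv t$ constant, i.e. $\zeta = t\phi = \frac{t(1-r^k)}{1+r^k}$; conversely one checks directly that such $\zeta$ gives $I[\zeta]=0$. (One must also note $\zeta = t\phi$ indeed lies in $H_0^1(D)$ since $\phi(1)=0$ and $\phi$ is smooth up to $r=1$.)

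The main obstacle is the boundary-regularity subtlety in the ground-state substitution: $\phi$ vanishes at $\partial D$, so the density/limiting argument needs care, and one should justify that for $\zeta\in H_0^1(D)$ the quotient $\eta=\zeta/\phi$ makes sense and the integration by parts above is valid (e.g. by first establishing the identity for $\zeta$ supported in $\{r\le 1-\delta\}$, where $\phi\ge c_\delta>0$, then letting $\delta\to 0$ using that $\int_{\{1-\delta<r<1\}}(|\nabla\zeta|^2+\zeta^2)\to 0$). An alternative, cleaner route avoiding this entirely is to invoke the known fact (see \cite{BrezisCoron83}, and \cite{INSZ_AnnIHP, INSZ_CVPDE} for the $Q$-tensor adaptation) that minimizing harmonic maps $D\to\Sphere^2$ into a hemisphere are strictly stable, with the degeneracy along $\phi$ coming precisely from the isometry $e^{i\theta}$ acting by rotation about the third axis on the image; I would mention this as a remark but present the self-contained substitution argument as the proof, since it also pins down the equality case explicitly. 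The remaining verification — that $\phi$ solves $\phi''+\frac1r\phi'+|\nabla n_*|^2\phi=0$ — is a routine but slightly tedious computation of $|\nabla n_*|^2$ from \eqref{Eq:n*Def}, which I would record as a short lemma or relegate to an appendix computation.
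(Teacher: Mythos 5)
Your proposal is correct and follows essentially the same route as the paper: the paper's proof also uses the ground-state substitution $\zeta = n_3\,\xi$ with the positive zero mode $n_3 = n_*\cdot e_3 = \frac{1-r^k}{1+r^k}$, yielding $I[\zeta]=\int_D n_3^2|\nabla\xi|^2\,dx$. The only difference is cosmetic: the paper obtains $L_\parallel n_3=0$ immediately from the harmonic map equation $-\Delta n_*=|\nabla n_*|^2 n_*$ (citing \cite[Lemma A.1]{INSZ3} for the substitution identity), rather than computing $|\nabla n_*|^2$ and verifying the radial ODE as you propose.
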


\begin{proof} Let $L_\parallel = - \Delta  -|\nabla n_*|^2$. W.l.o.g., we assume $n_*:=n_*^+$. Then $n_3 = n_* \cdot e_3 = \frac{1-r^k}{1+r^k} > 0$ in $D$ and note that $L_\parallel n_3 = 0$. Decomposing $\zeta = n_3\xi$, a direct computation yields (cf. \cite[Lemma A.1]{INSZ3})
\[
I[\zeta] := \int_D [|\nabla \zeta|^2 - |\nabla n_*|^2\,\zeta^2]\,dx = \int_D n_3^2|\nabla \xi|^2\,dx \geq 0.
\]
The assertion follows.
\end{proof}

\begin{lemma}\label{Lem:StrStab}
Let $n_*$ be given by \eqref{Eq:n*Def}. Then $n_*$ is strictly stable, i.e. there exists some number $c_0 > 0$ such that for any $\zeta \in H_0^1(D,\RR^3)$ with $\zeta \cdot n_* = 0$ a.e. in $D$, there holds
\[
I[\zeta] = \int_D [|\nabla \zeta|^2 - |\nabla n_*|^2\,|\zeta|^2]\,dx \geq c_0\int_D [|\nabla \zeta|^2 + |\zeta|^2]\,dx.
\]
\end{lemma}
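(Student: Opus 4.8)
The plan is to split the statement into two parts: first prove the non-strict bound $I[\zeta]\ge 0$ for all admissible $\zeta$ together with a description of the equality case (only $\zeta\equiv 0$), and then obtain the coercivity constant $c_0$ by a compactness argument. For the first part I would imitate the Jacobi-field substitution from the proof of Lemma~\ref{lem:NSStab}, but now carried out on the $\RR^3$-valued field $\zeta$. Replacing $n_*$ by $Jn_*$ and $\zeta$ by $J\zeta$ if necessary (which changes neither $I$ nor the constraint, since $J\in O(3)$), we may assume as in Lemma~\ref{lem:NSStab} that $\phi:=n_*\cdot e_3=\tfrac{1-r^k}{1+r^k}$ is positive on $D$. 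The point is that $\phi$ solves $-\Delta\phi-|\nabla n_*|^2\phi=0$ — this is simply the $e_3$-component of the harmonic map equation $-\Delta n_*=|\nabla n_*|^2 n_*$ — and that $|\phi'(r)/\phi(r)|\le C/(1-r)$ on $[0,1)$ (away from $r=1$ this quotient is bounded, and near $r=1$ one has $\phi(r)\sim\tfrac{|k|}{2}(1-r)$ since $\phi'(1)\ne 0$). Writing $\zeta=\phi\,\eta$, so that $\eta=\zeta/\phi\in H^1_{\mathrm{loc}}(D,\RR^3)$ with $\eta\cdot n_*=0$, and using $\Delta(\phi^2)=2|\nabla\phi|^2-2|\nabla n_*|^2\phi^2$, one gets the pointwise identity
\[
|\nabla\zeta|^2-|\nabla n_*|^2|\zeta|^2=\phi^2\,|\nabla\eta|^2+\tfrac12\,\mathrm{div}\!\big(|\eta|^2\,\nabla(\phi^2)\big)\qquad\text{a.e. in }D.
\]

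Integrating this over $D_\rho=\{|x|<\rho\}$ and sending $\rho\to 1$ should yield $I[\zeta]=\int_D\phi^2|\nabla\eta|^2\ge 0$. The one subtle step — and the main obstacle — is to check that the resulting boundary term $\frac12\int_{\partial D_\rho}|\eta|^2\,\partial_\nu(\phi^2)\,ds=\int_{\partial D_\rho}|\zeta|^2\,\tfrac{\phi'}{\phi}\,ds$ tends to $0$: here $\phi\to 0$ on $\partial D$ so $\eta$ may blow up, but using the bound on $\phi'/\phi$ above together with the elementary $H^1_0$ trace estimate $\frac{1}{1-\rho}\int_{\partial D_\rho}|\zeta|^2\,ds\le \int_{D\setminus D_\rho}|\nabla\zeta|^2\,dx$ (which holds because $\zeta=0$ on $\partial D$, whence $|\zeta(\rho e^{i\varphi})|^2\le (1-\rho)\int_\rho^1|\partial_r\zeta(se^{i\varphi})|^2\,ds$), the boundary term is $O\big(\int_{D\setminus D_\rho}|\nabla\zeta|^2\big)\to 0$. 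Once the identity $I[\zeta]=\int_D\phi^2|\nabla(\zeta/\phi)|^2$ is established, $I[\zeta]=0$ forces $\nabla(\zeta/\phi)\equiv 0$, i.e. $\zeta=\phi\,c$ for a constant $c\in\RR^3$; the constraint then gives $c\cdot n_*\equiv 0$ on $D$, and evaluating at $r=0$ (where $n_*=\pm e_3$) and then at $r>0$ (where the horizontal part of $n_*$ is a nonzero multiple of $(\cos\tfrac{k\varphi}{2},\sin\tfrac{k\varphi}{2},0)$) forces $c=0$. Hence $I[\zeta]>0$ for every admissible $\zeta\ne 0$.

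Finally, to get $c_0>0$, I would argue by contradiction: if no such constant existed there would be $\zeta_m$ with $\|\zeta_m\|_{H^1}=1$, $\zeta_m\cdot n_*=0$, and $I[\zeta_m]\to 0$. Because $n_*$ is smooth up to $\bar D$ for even $k$, $|\nabla n_*|^2\in L^\infty(D)$, so $\zeta\mapsto\int_D|\nabla n_*|^2|\zeta|^2$ is weakly continuous on $H^1$; passing to a subsequence, $\zeta_m\rightharpoonup\zeta_\infty$ in $H^1$ and strongly in $L^2$, the constraint passes to the limit, and weak lower semicontinuity gives $I[\zeta_\infty]\le\liminf I[\zeta_m]=0$, so $\zeta_\infty=0$ by the equality case. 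Then $\int_D|\nabla n_*|^2|\zeta_m|^2\to 0$ and $\|\zeta_m\|_{L^2}\to 0$, whence $\|\nabla\zeta_m\|_{L^2}^2=I[\zeta_m]+\int_D|\nabla n_*|^2|\zeta_m|^2\to 0$, contradicting $\|\zeta_m\|_{H^1}=1$. This proves the asserted coercivity. The bulk of the work is thus concentrated in the boundary-term estimate; the identity itself is a routine computation, and the coercivity extraction is a standard Rellich/compactness argument.
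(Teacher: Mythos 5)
Your proof is correct and follows essentially the same route as the paper: nonnegativity together with the characterization of the equality case via the substitution $\zeta = n_3\,\eta$ (this is exactly the content of Lemma \ref{lem:NSStab}, which the paper simply applies componentwise and then rules out a nonzero minimizer using the constraint $\zeta\cdot n_*=0$), followed by a compactness argument to extract the strict constant. The only differences are cosmetic: you re-derive the vector-valued identity including the boundary-term justification that the paper delegates to a short computation/citation, and you obtain the full $H^1$-coercivity from an $H^1$-normalized contradiction sequence, whereas the paper first shows the constrained first $L^2$-eigenvalue $\lambda_1$ is positive and then absorbs the gradient term using the boundedness of $|\nabla n_*|$.
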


\begin{proof}
By Lemma \ref{lem:NSStab}, $I$ is non-negative on $H^1_0(D,\RR^3)$. Let 
\[
\lambda_1 = \inf \Big\{I[\zeta]: \zeta \in H_0^1(D,\RR^3), \|\zeta\|_{L^2(D)} = 1, \zeta \cdot n_* = 0 \text{ a.e. in } D\Big\} \geq 0.
\]
Using the smoothness of $n_*$, we can apply the direct method of the calculus of variations to show that $\lambda_1$ is achieved by some $\bar \zeta \in H_0^1(D,\RR^3)$ satisfying $\|\bar\zeta\|_{L^2(D)} = 1$ and $\bar\zeta \cdot n_* = 0$ a.e. in $D$. 

If $\lambda_1 = 0$, Lemma \ref{lem:NSStab} implies that each component of $\bar\zeta$ is proportional to $\frac{1 - r^k}{1+r^k}$. However, as $\bar\zeta \cdot n_* = 0$, this is possible only if $\bar \zeta \equiv 0$, which contradicts $\|\bar\zeta\|_{L^2(D)} = 1$.

We thus have that $\lambda_1 > 0$. Consequently, as $|\nabla n_*|$ is bounded, there exists some $\eta > 0$ such that, for any $\zeta \in H^1_0(D,\RR^3)$ with $\zeta \cdot n_* = 0$ a.e. in $D$,
\[
\int_D [|\nabla \zeta|^2 - |\nabla n_*|^2\,|\zeta|^2]\,dx \geq \lambda_1\int_D |\zeta|^2\,dx \geq \eta \int_D |\nabla n_*|^2\,|\zeta|^2\,dx,
\]
which implies
\[
\int_D [|\nabla \zeta|^2 - |\nabla n_*|^2\,|\zeta|^2]\,dx \geq \frac{\eta}{1 + \eta} \int_D |\nabla \zeta|^2\,dx.
\]
The conclusion is readily seen.
\end{proof}


\section{A calculus lemma}\label{app:Calc}

\begin{lemma}\label{Lem:gMin}
Let $g(x,y,z)= 2x^3 - 6x y^2 + 3xz^2 + 3\sqrt{3} yz^2$ for every $x,y,z\in \RR$. Then
\[
-2(x^2 + y^2 + z^2)^{3/2} \leq g(x,y,z) \leq 2(x^2 + y^2 + z^2)^{3/2}.
\]
Equality in the first inequality holds if and only if $(x,y,z) = s(\frac{1}{2}, \frac{\sqrt{3}}{2}, 0)$ for some $s \geq 0$ or $x + \sqrt{3}y = -\sqrt{x^2 + y^2 + z^2}$. Equality in the second inequality holds if and only if $(x,y,z) = s(\frac{1}{2}, \frac{\sqrt{3}}{2}, 0)$ for some $s \leq 0$ or $x + \sqrt{3}y = \sqrt{x^2 + y^2 + z^2}$.
\end{lemma}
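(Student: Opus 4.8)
The plan is to reduce, using the degree-$3$ homogeneity of $g$, to a one-variable problem on $[-1,1]$. Since $g(tx,ty,tz)=t^3g(x,y,z)$ for $t\ge 0$, it suffices to prove $-2\le g\le 2$ on the unit sphere $\Sigma=\{x^2+y^2+z^2=1\}$ (the origin being trivial) and then to determine which points of $\Sigma$ give equality. On $\Sigma$ I would set $x=\rho\cos\phi$, $y=\rho\sin\phi$ with $\rho=\sqrt{x^2+y^2}\ge 0$ and $\rho^2+z^2=1$. Then $2x^3-6xy^2=2\operatorname{Re}\big((x+iy)^3\big)=2\rho^3\cos 3\phi$ and $x+\sqrt3\,y=2\rho\cos(\phi-\tfrac{\pi}{3})$, so
\[
g=2\rho^3\cos 3\phi+6\rho z^2\cos\!\big(\phi-\tfrac{\pi}{3}\big).
\]
Putting $\alpha=\phi-\tfrac{\pi}{3}$ and using $\cos 3\phi=-\cos 3\alpha=-(4\cos^3\alpha-3\cos\alpha)$ together with $\rho^2+z^2=1$ collapses this to
\[
g=-8\rho^3\cos^3\alpha+6\rho\cos\alpha\,(\rho^2+z^2)=6w-8w^3,\qquad w:=\rho\cos\alpha=\tfrac12\big(x+\sqrt3\,y\big).
\]
By the Cauchy--Schwarz inequality, $|x+\sqrt3\,y|\le 2\sqrt{x^2+y^2}\le 2$ on $\Sigma$, so $w\in[-1,1]$.

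The next step is the elementary analysis of $h(w):=6w-8w^3$ on $[-1,1]$. From $h'(w)=6-24w^2$ the only interior critical points are $w=\pm\tfrac12$, with $h(\tfrac12)=2$, $h(-\tfrac12)=-2$, $h(-1)=2$, $h(1)=-2$; a short monotonicity check gives $\max_{[-1,1]}h=2$, attained exactly at $w=\tfrac12$ and $w=-1$, and $\min_{[-1,1]}h=-2$, attained exactly at $w=-\tfrac12$ and $w=1$. Hence $-2\le g\le 2$ on $\Sigma$, and by homogeneity $-2(x^2+y^2+z^2)^{3/2}\le g(x,y,z)\le 2(x^2+y^2+z^2)^{3/2}$ for all $(x,y,z)\in\RR^3$.

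For the equality cases, fix $(x,y,z)\ne 0$; then $g(x,y,z)=2(x^2+y^2+z^2)^{3/2}$ iff $h(w)=2$ with $w=(x+\sqrt3\,y)/\big(2\sqrt{x^2+y^2+z^2}\big)$, i.e. iff $w=\tfrac12$ or $w=-1$. The value $w=\tfrac12$ is exactly the condition $x+\sqrt3\,y=\sqrt{x^2+y^2+z^2}$, while $w=-1$ saturates the Cauchy--Schwarz inequality used above, forcing $z=0$ and $(x,y)$ to be a negative multiple of $(1,\sqrt3)$, that is, $(x,y,z)=s(\tfrac12,\tfrac{\sqrt3}{2},0)$ with $s<0$; adjoining the origin gives ``$s\le 0$''. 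The equality statement for the lower bound follows identically, or from $g(-x,-y,-z)=-g(x,y,z)$. I expect no genuine obstacle: once the substitution $w=\tfrac12(x+\sqrt3\,y)$ is spotted, everything is routine, the only thing requiring attention being the bookkeeping of the two equality families $\{w=\pm\tfrac12\}$ versus $\{w=\pm1\}$ and the degenerate point at the origin. (Alternatively one may observe $g=\tfrac{12}{\sqrt6}\,\tr(Q^3)$ for $Q=xE_0+yE_1+zE_3\in\mcS_0$ and invoke the sharp bound $|\tr(Q^3)|\le\tfrac{1}{\sqrt6}|Q|^3$ for traceless symmetric matrices, though transcribing its equality case into the stated form is less immediate.)
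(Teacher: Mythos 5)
Your argument is correct and is essentially the paper's proof: after reducing by homogeneity to the unit sphere, both arguments hinge on the observation that $g$ restricted to the sphere depends only on $t=x+\sqrt{3}y$ through the cubic $3t-t^3$ on $[-2,2]$ (your $h(w)=6w-8w^3$ with $w=t/2$ is the same function rescaled), and the equality bookkeeping, including the Cauchy--Schwarz degeneracy forcing $z=0$ and $(x,y)$ antiparallel to $(1,\sqrt{3})$, matches the paper. The only cosmetic difference is that you reach the one-variable reduction via polar coordinates and the triple-angle formula, whereas the paper factors $g=(x+\sqrt{3}y)(2x^2+3z^2-2\sqrt{3}xy)$ and uses the constraint directly.
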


\begin{proof} Since $g$ is three-homogeneous, it suffices consider the extremization problem
\[
\max\{g: x^2 + y^2 + z^2 = 1\} \text{ and } \min\{g: x^2 + y^2 + z^2 = 1\}.
\]
We rewrite
$
g = (x + \sqrt{3}y)(2x^2 + 3z^2 - 2\sqrt{3} xy),
$
and so when $x^2 + y^2 + z^2 = 1$,
\[
g = (x + \sqrt{3}y)(3 - (x + \sqrt{3}y)^2) = \tilde g(x + \sqrt{3}y),
\]
where $\tilde g(t) = 3t - t^3$. As $|x + \sqrt{3}y| \leq 2$ when $x^2 + y^2 + z^2 = 1$, $\max_{[-2,2]} \tilde g = 2$, which is achieved for $t \in \{-2, 1\}$, $\min_{[-2,2]} \tilde g = -2$, which is achieved for $t \in \{-1, 2\}$, the conclusion follows.
\end{proof}

\def\cprime{$'$}

\end{document}